\theoremstyle{plain}
\newtheorem{lem}{Lemma}[section]
\newtheorem{prop}[lem]{Proposition}
\newtheorem{thm}[lem]{Theorem}
\newtheorem{cor}[lem]{Corollary}
\theoremstyle{definition}
\newtheorem{defn}[lem]{Definition}
\theoremstyle{remark}
\numberwithin{equation}{section}
\DeclareMathOperator{\diam}{diam}
\DeclareMathOperator{\var}{var}
\newcommand{\R}{\mathbb R}
\newcommand{\N}{\mathbb N}
\newcommand{\Q}{\mathbb Q}
\renewcommand{\epsilon}{\varepsilon}
\numberwithin{equation}{section}
\begin{document}

\title[Multifractal analysis]{Multifractal analysis for expanding interval maps with infinitely many branches}

\author{Ai-Hua Fan}
\address{Ai-Hua Fan\\LAMFA UMR 6140, CNRS,
Universit\'e de Picardie Jules Verne, 33, Rue Saint Leu, 80039
Amiens Cedex 1, France}
\email{ai-hua.fan@u-picardie.fr}
\author{Thomas Jordan}
\address{Thomas Jordan\\School of Mathematics\\ The University of Bristol\\
University Walk\\Clifton\\ Bristol\\BS8 1TW\\UK}
\email{thomas.jordan@bristol.ac.uk}
\author{Lingmin Liao}
\address{Lingmin Liao\\LAMA UMR 8050, CNRS,
Universit\'e Paris-Est Cr\'eteil, 61 Avenue du
G\'en\'eral de Gaulle, 94010 Cr\'eteil Cedex, France}
\email{lingmin.liao@u-pec.fr}
\author{Micha\l\ Rams}
\address{Micha\l\ Rams\\Institute of Mathematics\\ Polish Academy of Sciences\\ ul.
\'Sniadeckich 8, 00-956 Warszawa\\ Poland }
\email{M.Rams@impan.gov.pl}
\thanks{M.R. was partially supported by the MNiSWN grant N201 607640 (Poland).}
\thanks{The work was started during a conference in Warsaw in October 2010 on Fractals in Deterministic and Random Dynamics. This workshop was funded by the EU network CODY.
 The research was continued in a workshop in Warwick in April 2011 on Dimension Theory and Dynamical Systems. This workshop was funded by the EPSRC.
 We would like to thank both workshops for their support.}

\subjclass[2010]{Primary 28A80 Secondary 37E05, 28A78}

\date{}

\keywords{Multifractal analysis, Birkhoff averages, interval maps, infinitely many branches}

\begin{abstract}
In this paper we investigate multifractal decompositions based on values of Birkhoff averages of  functions from a class of symbolically continuous functions. This will be done for an expanding interval map with infinitely many branches and is a generalisation of previous work for expanding maps with finitely many branches. We show that there are substantial differences between this case and the setting where the expanding map has only finitely many branches.
\end{abstract}

\maketitle


\section{Setting}

Let $(X,d)$ be a metric space and $T: X\rightarrow X$ be a piecewise continuous transformation. Let $\phi : X \rightarrow \mathbb{R}$  be a real-valued function (called a potential).  The Birkhoff average of $\phi$ is defined by
\[
      A_n\phi(x):= \frac{1}{n}\sum_{j=0}^{n-1}\phi(T^jx).
\]
With respect to an ergodic measure, for a measurable potential $\phi$, the Birkhoff averages $A_n\phi(x)$ almost surely converge to the integral of $\phi$. However, since for an expanding map there is a large family of ergodic measures, the Birkhoff averages can take a wide variety of values. From the point of view of multifractal analysis, one considers the size (Hausdorff dimension) of the level sets of the limit of the Birkhoff averages. That is, for a given level $\alpha\in \mathbb{R}$, the Hausdorff dimension of the set
\[
\Big\{x\in X: \lim_{n\to \infty}A_n\phi (x) = \alpha \Big\}.
\]

  	There has been a substantial amount of works on this multifractal analysis, especially for expanding interval maps with finitely many branches. The first example we know where a problem of this type was studied is the work of Besicovitch in \cite{bes} on the Hausdorff dimension of sets determined by the frequency of the digits in dyadic expansions. This can be viewed as a multifractal analysis of the Birkhoff averages of the indicator functions for the doubling map. This work was subsequently extended by Eggleston \cite{E} and many others \cite{BSc, Caj, Du, Oli98, Oli00, Ol02, Ol03, OW03, PS07, Vol58}.  For a continuous potential, the case of mixing subshift of finite type is studied in several papers including \cite{BS, BSS, BSS2, FF, FFW, FLW, O, Oli1,OW, PW,T}. In \cite{FLW} Feng, Lau and Wu proved a conditional variational principle for continuous potentials in the setting of general conformal expanding maps and in \cite{BS} Barreira and Saussol showed that this conditional variational principle varies analytically for H\"{o}lder potentials. In \cite{TV} Takens and Verbitzkiy considered systems with specification property and calculated the topological entropy of the level sets. In \cite{H}, Hofbauer studied the entropy of the level set of Birkhoff averages for piecewise monotone interval maps. It is also possible to study a countable family of piecewise continuous potentials.  This case was investigated by Olsen \cite{O}, Olsen and Winter \cite{OW}  for subshifts of finite type and conformal iterated function systems and by Fan, Liao and Peyri\`ere \cite{FLP}, in terms of topological entropy, for systems satisfying the specification property.

In particular in  \cite{O},
the following situation is considered. Let $T:[0,1]\rightarrow [0,1]$ be a $C^{1}$ expanding map and for $i\in\N$ let $\phi_i:[0,1]\rightarrow\R$ be continuous functions. For a vector $\underline{\alpha}\in\R^{\N}$  let
$$X_{\underline{\alpha}}:=\left\{x\in [0,1]:\lim_{n\rightarrow\infty}\frac{1}{n}\sum_{j=0}^{n-1}\phi_i(T^jx)=\alpha_i\text{ for all }i\in\N\right\}.$$
It is shown that if $X_{\underline{\alpha}}\neq\emptyset$ there exists a $T$-invariant measure $\mu$ such that $\int\phi_i\text{d}\mu=\alpha_i$ for all $i\in\N$ and
$$\dim X_{\underline{\alpha}}=\sup\left\{\frac{h(\mu)}{\lambda(\mu)}:\int\phi_i\text{d}\mu=\alpha_i\text{ for all }i\in\N\right\}.$$
Here $\dim$ stands for the Hausdorff dimension, $h(\mu)$ denotes the measure theoretic entropy of $\mu$ and $\lambda(\mu)=\int\log(|T'(x)|)\text{d}\mu$ is the Lyapunov exponent of $\mu$.

The aim of this paper is to look at expanding maps $T$ on a non-compact space where $T$ has a countable number of inverse branches. While much of the same theory still holds there are also substantial differences.

In the setting of expanding maps with a countable number of branches, there have been several papers looking at multifractal analysis. Most of these papers concentrate on the local dimension of Gibbs' measures or specific examples of continuous potentials, for example
$\log |T'|$ concerning the Lyapunov exponent. Of particular relevance to our work are the papers \cite{FLM} and \cite{FLMW} which consider the frequency of digits for certain maps with a countable number of branches. This can be viewed as an example of multifractal analysis for Birkhoff averages of a specific family of potentials. A notable feature of these papers is that frequencies of digits which sum to less than $1$ still yield positive dimension. However such sets cannot be related to an invariant measure. There is also a preprint \cite{IJ}, which considers the case of one piecewise continuous potential with certain properties.   Our main aim is to generalize these results to more general families of potentials and more general countable expanding maps with a countable number of branches. Related questions are also studied in certain non-conformal settings \cite{R, KR}.

Let $\{I_i\}_{i=1}^{\infty}$ be a countable collection of disjoint subintervals of $[0,1]$. 
Let
$T_i:\overline{I_i}\to [0,1]$ be a bijective $C^{1}$ map such that
$|T_i'(x)|\geq\xi>1$. By this we will mean that $T_i$ can be extended to a $C^1$ diffeomorphism from an open neighbourhood of $I_i$ to an open neighbourhood of $[0,1]$ which maps $\overline{I_i}$ to $[0,1]$. We define the map $T:\cup \overline{I_i}\rightarrow
[0,1]$ as follows. If $x$ is not a common end point of two intervals, define
$$T(x)=T_i(x)\ \text{ if }x\in \overline{I_i}.$$
Otherwise we simply set $ T(x)=T_l(x)$ where $l=\min\{j:x\in I_j\}$.

Consider the full shift $(\Sigma, \sigma)$ with $\Sigma=\N^{\N}$ and the natural
projection $\Pi:\Sigma\rightarrow [0,1]$ defined by
$$\Pi(\underline{i})=\lim_{n\rightarrow\infty}T_{i_1}^{-1}\circ\cdots\circ T_{i_n}^{-1}([0,1]).$$
Let $$
\Lambda=\Pi(\Sigma). 
$$
Then $(\Lambda, T)$ defines a dynamical system. We remark that the space $\Lambda$ is not necessarily compact and it could also be a Cantor type set. We will denote
$$E:=\{x\in\Lambda:\#\Pi^{-1}(x)\geq 2\}$$
and note that this set is at most countable  and so for any set $\Omega\subset\Lambda$ we have that $\dim\Omega=\dim\Omega\backslash E$. To avoid confusion with the notion of the derivative of $T$ we will adopt the convention that for $x\in\Lambda$ $T'(x)=T_l'(x)$ where $l=\min\{j:x\in I_j\}$.
We will also assume that
     the variations of $\log |T'|$ converge uniformly to $0$ (defined precisely in Section 2, see Definition \ref{unifvar}).



Let $\mathcal{M}(T)$ be the set of $T$-invariant probability measures on $\Lambda$ and note that they must assign $0$ measure to $E$.  Thus $\Pi$ gives a bijection between the set of shift invariant probability measures and $T$-invariant probability measures. To avoid complications when we refer to weak* limits of a sequence of measures we will always mean weak* limits of the measures in the symbolic space.



     Given a sequence of functions $\phi_i:\Lambda\rightarrow\R$  ($i\in\N$), which satisfy that the variations tend uniformly to $0$ (again see Definition \ref{unifvar}),
we will denote the Birkhoff averages
$$A_n\phi_i(x)=\frac{1}{n}\sum_{j=0}^{n-1}\phi_i(T^jx).
$$ We wish to
study the possible limit points in $\R^\N$ of the Birkhoff average sequences
$\{A_n\phi_i(x)\}_{n\in\N}$
by investigating the sets of the form
$$\Lambda_{\underline\gamma}=\{x\in\Lambda:\lim_{n\rightarrow\infty}A_n\phi_i(x)=\gamma_i\text{ for all }i\in\N\}
,\quad \underline{\gamma} \in \mathbb{R}^\mathbb{N}.$$

The following sets will describe the possible limits of the Birkhoff averages. Let $$
Z_0=\left\{\underline{\gamma}\in\R^\N: \exists \mu \in \mathcal{M}(T),\forall i\in\N,
     \int\phi_i\text{d}\mu=\gamma_i
\right\}.
     $$
     We will denote by $Z$
the closure of $Z_0$ in the pointwise limit topology. That is to say,
$\underline{\gamma}\in Z$ means that for any $\epsilon>0$ and any $k\in\N$
there exists a $T$-invariant probability measure $\mu$ such that
$$\forall 1\leq i\leq k, \ \  \left|\int\phi_i\text{d}\mu-\gamma_i\right|\leq\epsilon. $$

For a $T$-invariant probability measure $\mu$ let $h(\mu)$ and $\lambda(\mu)$ denote the measure theoretic entropy and the Lyapunov exponent of $\mu$ respectively. See Section 2 for formal definitions.

Our aim is to find the Hausdorff dimension of $\Lambda_{\underline{\gamma}}$ and consider how the dimension varies with $\underline{\gamma}$.
The known results for dynamical systems of finite branches suggest three  natural candidates in the infinite case.
Given $\underline{\gamma}\in Z$, let
\[
\alpha_1(\underline{\gamma})=\lim_{\varepsilon\to 0}
\lim_{k\to\infty}
\sup_{\mu\in\mathcal{M}(T)}\left\{\frac{h(\mu)}{\lambda(\mu)}:\left|\int\phi_i
\text{d}\mu-\gamma_i\right| < \varepsilon\ \forall i\leq k,
\ h(\mu)<\infty\right\}.
\]
Let $\alpha_2$ be a similar function, the difference being that the supremum is taken over ergodic measures ($\mathcal{M}_{\mathcal{E}}(T)$):
\[
\alpha_2(\underline{\gamma})=\lim_{\varepsilon\to 0} \lim_{k\to\infty}
\sup_{\mu\in\mathcal{M}_{\mathcal{E}}(T)}\left\{\frac{h(\mu)}{\lambda(\mu)}:\left|\int\phi_i
\text{d}\mu-\gamma_i\right| < \varepsilon\ \forall i\leq k, \
h(\mu)<\infty\right\}.
\]
Finally, for $\underline{\gamma}\in Z_0$ we will define
\[
\alpha_3(\underline{\gamma})=\sup_{\mu\in\mathcal{M}(T)}\left\{\frac{h(\mu)}{\lambda(\mu)}:\int\phi_i \text{d}\mu=\gamma_i \ \forall i\in\N, \ h(\mu)<\infty\right\}.
\]

We can now state our main theorems.
\begin{thm}\label{main0}
For $\underline{\gamma}\notin Z$, we have $\Lambda_{\underline\gamma}=\emptyset$.
For all $\underline{\gamma}\in Z$, we have
\[
\dim \Lambda_{\underline\gamma}= \alpha_1(\underline{\gamma})=\alpha_2(\underline{\gamma}).
\]
\end{thm}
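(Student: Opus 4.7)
\emph{Empty-set claim.} Working on the symbolic space $\Sigma$ via $\Pi$ (a bijection off the countable null set $E$), suppose $x\in\Lambda_{\underline{\gamma}}$ and fix $k\in\N$, $\epsilon>0$. Lift $x$ to $\tilde x\in\Sigma$ and form the empirical measures $\mu_n=\frac{1}{n}\sum_{j<n}\delta_{\sigma^j\tilde x}$. These are almost invariant ($|\sigma_*\mu_n-\mu_n|\le 2/n$), so any weak-$*$ subsequential limit $\mu$ in the compact space of sub-probability measures on $\Sigma$ is $\sigma$-invariant. The uniform variation hypothesis lets me approximate each $\phi_i$ ($i\le k$) in $L^\infty$ by a cylinder function depending on finitely many coordinates, hence $\int\phi_i\,d\mu_n\to\int\phi_i\,d\mu$ up to error $\epsilon$; since $\int\phi_i\,d\mu_n=A_n\phi_i(\tilde x)\to\gamma_i$, appropriate normalization (or correction by an invariant measure supported on a remote subalphabet, absorbing any escaped mass without perturbing the first $k$ constraints) yields an invariant probability satisfying $|\int\phi_i\,d\mu-\gamma_i|\le\epsilon$ for $i\le k$, so $\underline{\gamma}\in Z$. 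Handling escape of mass to infinity is the main technical worry here.

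\emph{Equality $\alpha_1=\alpha_2$.} Only $\alpha_1\le\alpha_2$ requires work. Given $\mu$ nearly realizing the sup defining $\alpha_1$ at parameters $(\epsilon,k)$, take its ergodic decomposition $\mu=\int\mu_\omega\,d\mathbb{P}(\omega)$. Since $h$, $\lambda$, and each $\int\phi_i$ are affine in $\mu$, the ratio $h(\mu)/\lambda(\mu)$ is a $\lambda$-weighted mean of the ergodic ratios $h(\mu_\omega)/\lambda(\mu_\omega)$ and so is bounded by their essential supremum. A careful selection of an ergodic component with both near-maximal ratio and (for suitably relaxed parameters $\epsilon',k'$) approximately satisfying the integral constraint, followed by taking the limits $\epsilon'\to 0$, $k'\to\infty$, gives $\alpha_1\le\alpha_2$.

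\emph{Lower bound $\dim\Lambda_{\underline{\gamma}}\ge\alpha_2(\underline{\gamma})$.} Fix $\alpha<\alpha_2(\underline{\gamma})$ and extract ergodic $\nu^{(j)}$ with $h(\nu^{(j)})/\lambda(\nu^{(j)})\ge\alpha$ and first $k_j$ integrals $\epsilon_j$-close to $\underline{\gamma}$, with $\epsilon_j\downarrow 0$, $k_j\uparrow\infty$. After approximating each $\nu^{(j)}$ by a Bernoulli or Markov measure on a finite subalphabet $\{1,\dots,N_j\}$ (standard, using the uniform variation hypothesis to preserve $h$, $\lambda$ and the relevant integrals up to $o(1)$), I perform a Moran construction: concatenate blocks of length $\ell_j$ drawn from the $\nu^{(j)}$-typical set with $\ell_{j+1}\gg\sum_{i\le j}\ell_i$. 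Birkhoff applied to each $\nu^{(j)}$ places the resulting Cantor set inside $\Lambda_{\underline{\gamma}}$, and the mass distribution principle applied to the natural Bernoulli product on the construction gives Hausdorff dimension at least $\alpha-o(1)$.

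\emph{Upper bound $\dim\Lambda_{\underline{\gamma}}\le\alpha_1(\underline{\gamma})$.} Fix $\alpha>\alpha_1(\underline{\gamma})$, so there exist $\epsilon,k$ with $h(\mu)/\lambda(\mu)<\alpha$ for every invariant $\mu$ satisfying the $(\epsilon,k)$-constraint. For $x\in\Lambda_{\underline{\gamma}}$ and large $n$ the empirical measure $\mu_n^{x}$ satisfies this constraint, hence so does every weak-$*$ accumulation point. Cover $\Lambda_{\underline{\gamma}}$ by length-$n$ cylinders grouped into ``types'' by proximity to some such invariant $\mu$; a Katok-type estimate bounds the number of $\mu$-typical cylinders by $e^{n(h(\mu)+\delta)}$, each of diameter $\asymp e^{-n\lambda(\mu)}$, so the $\alpha$-Hausdorff sum over one type is at most $e^{n(h(\mu)-\alpha\lambda(\mu))}\to 0$. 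The principal obstacle is that the infinite alphabet makes both the candidate limit measures and the cylinders of any given diameter infinite in number; I plan to restrict the covering to cylinders using only digits $\le N=N(\epsilon,k)$ and show, using uniform variation, that the excluded portion of $\Lambda_{\underline{\gamma}}$ has dimension $<\alpha$---essentially an independent dimension bound on the ``escaping mass'' set, reminiscent of the subtleties observed in \cite{FLM,FLMW}.
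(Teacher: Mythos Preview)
Your overall architecture differs from the paper's in a way that creates real difficulties. The paper does \emph{not} prove $\alpha_1=\alpha_2$ directly; instead it shows $\dim\Lambda_{\underline{\gamma}}\geq\alpha_1$ (lower bound with \emph{invariant} measures) and $\dim\Lambda_{\underline{\gamma}}\leq\alpha_2$ (upper bound producing an \emph{ergodic} measure), and since $\alpha_2\leq\alpha_1$ is trivial, equality of all three follows. This ordering is not cosmetic: it bypasses exactly the step where your argument has a gap.

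\textbf{The ergodic decomposition step is incomplete.} You claim that from $\mu=\int\mu_\omega\,d\mathbb{P}$ one can select a component $\mu_\omega$ with both near-maximal $h/\lambda$ and integrals close to $\underline{\gamma}$. But these two requirements pull in different directions: the components with large $h(\mu_\omega)/\lambda(\mu_\omega)$ need not have $\int\phi_i\,d\mu_\omega$ anywhere near $\gamma_i$. (Think of $\mu=\tfrac12\mu_1+\tfrac12\mu_2$ with $\int\phi\,d\mu_1=0$, $\int\phi\,d\mu_2=2$, equal ratios $h/\lambda$; the average satisfies $\int\phi=1$ but neither component does.) The paper's substitute is a \emph{construction}, not a selection: given any finite collection of $n$-cylinders, assign a probability vector and build a $\sigma^n$-Bernoulli measure, then average to obtain a $\sigma$-ergodic measure with prescribed entropy, Lyapunov exponent and Birkhoff integrals up to $\var_n$ errors (Proposition~\ref{bern1}, Corollaries~\ref{bern2}--\ref{bern3}). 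This is the engine that turns invariant data into ergodic data while preserving all the relevant quantities.

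\textbf{The upper bound.} Your plan (empirical measures, grouping cylinders by nearby invariant measures, Katok-type counting, then handling escaping mass separately) is where you yourself flag the main obstacle, and I agree it is one. The paper's argument is short and avoids all of it: if $\tilde s=\dim\Lambda_{\underline{\gamma}}$, then for large $n$ the $n$-cylinders meeting $\Lambda_{\underline{\gamma}}$ have $\sum\diam^{\tilde s-\epsilon}>1$; pick a \emph{finite} subfamily where this still holds, choose $s>\tilde s-\epsilon$ making the sum equal to $1$, and feed the weights $\diam^s$ into the Bernoulli construction. Out comes an ergodic measure with $h/\lambda\approx s$ and (by Lemma~\ref{ancyl}) integrals $\epsilon$-close to $\gamma_i$ for $i\leq k$. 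No compactness of measure space, no type-counting, no tail estimate.

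\textbf{The empty-set claim.} Your empirical-measure route runs into the escape-of-mass problem you note, and your proposed fix (absorb the lost mass on a remote subalphabet) may perturb the constraints when the $\phi_i$ are unbounded, which the hypotheses allow. The paper instead takes $\omega$ with $\Pi\omega=x$, chooses $n$ large so that $|A_n\phi_i(x)-\gamma_i|$ and the $n$-th variations are small for $i\leq k$, and lets $\mu$ be the periodic-orbit measure on $\overline{(\omega_1,\ldots,\omega_n)}$. This is already an invariant probability, so no limit and no tightness issue arise.

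Your lower bound sketch (Moran/$w$-measure construction from a sequence of approximating ergodic measures) is essentially the paper's Theorem~\ref{w}; the paper just front-loads the Bernoulli approximation so that the input measures can be taken invariant rather than ergodic, which is what yields $\geq\alpha_1$ rather than $\geq\alpha_2$.
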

We would like to state the spectrum using the function $\alpha_3$,
too (hence, without the awkward limits in $k$ and $\varepsilon$).
However, as shown in \cite{FLM} and \cite{FLMW}, the spectrum
is not necessarily equal to $\alpha_3(\underline{\gamma})$. One
particular problem is that there might be points in $Z\setminus
Z_0$ that are limits of the Birkhoff averages of $\phi_i$ for some $x\in\Lambda$
(while, not belonging to $Z_0$, they are not averages of
potentials $\phi_i$ for any invariant measure). Another problem is
that even for points in $Z_0$ the spectrum needs not to be the
supremum of $h/\lambda$ over invariant measures with given
averages of $\phi_i$.

We are only able to present the ``exact"
type statement for bounded potentials, and the proof involves more steps
than  for the ``approximate" type statements of Theorem \ref{main0}. We
also need to introduce the quantity,
$$s_{\infty}=\inf\Big\{s\ge 0:\sum_{i\in\N}\diam(I_i)^s<\infty\Big\}.$$
Observe that $0\le s_\infty \le 1$. The exponent $s_\infty$ will play an important role.

\begin{thm}\label{main}
If the potentials $\phi_i$ are all bounded then for all
$\underline{\gamma}\in Z_0$ we have
$$\dim \Lambda_{\underline\gamma}=\max\left\{s_{\infty}, \ \alpha_3(\underline{\gamma})\right\},$$
while for all $\underline{\gamma}\in Z\setminus Z_0$ we have
\[
\dim \Lambda_{\underline\gamma}=s_\infty.
\]
\end{thm}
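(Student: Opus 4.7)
The strategy is to isolate the two competing mechanisms producing dimension: (a) the standard variational contribution from genuine invariant measures (bounded by $\alpha_3(\underline\gamma)$), and (b) a combinatorial contribution of size $s_\infty$ coming from sparse visits to the tail of the branch family, where boundedness of the $\phi_i$ lets us insert complex blocks without perturbing the Birkhoff averages. The argument splits into a lower bound with two separate constructions and an upper bound based on an escape-of-mass dichotomy for the empirical measures $\mu_n^x=\frac{1}{n}\sum_{j=0}^{n-1}\delta_{T^j x}$.

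\textbf{Lower bound.} When $\underline\gamma\in Z_0$, the inequality $\dim\Lambda_{\underline\gamma}\ge\alpha_3(\underline\gamma)$ is obtained by the usual route: choose $\mu\in\mathcal{M}(T)$ with $\int\phi_i\,d\mu=\gamma_i$ and $h(\mu)/\lambda(\mu)$ within $\varepsilon$ of the supremum, approximate $\mu$ in entropy by ergodic measures supported on finite subsystems of $(\Sigma,\sigma)$, push the resulting cylinder constructions down to $\Lambda$ using bounded distortion, and invoke the mass distribution principle. The inequality $\dim\Lambda_{\underline\gamma}\ge s_\infty$ for every $\underline\gamma\in Z$ is where the bounded-potential hypothesis is essential. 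Fix $\varepsilon>0$, $k\in\N$, and pick an invariant $\nu$ with $|\int\phi_i\,d\nu-\gamma_i|<\varepsilon$ for $i\le k$; then choose a finite $F_\varepsilon\subset\N$ with $\sum_{i\in F_\varepsilon}\diam(I_i)^{s_\infty-\varepsilon}\ge 1$, which exists by the definition of $s_\infty$. Construct a Moran-type Cantor subset of $\Lambda_{\underline\gamma}$ by interleaving very long $\nu$-typical blocks with short free blocks whose symbols range over $F_\varepsilon$. Because $\|\phi_i\|_\infty<\infty$, letting the density of free blocks tend to $0$ slowly still forces $A_n\phi_i\to\gamma_i$, while the Bernoulli-like measure on $F_\varepsilon^{\N}$ supplies dimension at least $s_\infty-\varepsilon$ via the distortion estimates. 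Sending $\varepsilon\to 0$ and $k\to\infty$ gives the claim.

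\textbf{Upper bound.} Given $x\in\Lambda_{\underline\gamma}$, either some subsequence of $\mu_n^x$ remains tight or mass escapes to infinity. In the tight case, any weak-$*$ limit $\mu$ on $\Lambda$ is a $T$-invariant probability measure; since each $\phi_i$ is bounded and has uniformly vanishing variation, $\int\phi_i\,d\mu=\gamma_i$, so $\underline\gamma\in Z_0$ and the local dimension at $x$ is at most $h(\mu)/\lambda(\mu)\le\alpha_3(\underline\gamma)$ by the standard symbolic multifractal upper bound. In the escape case, for every $N$ the density of indices $j$ with $T^jx\in\bigcup_{i\ge N}I_i$ stays above some $\delta>0$; covering such $x$ by cylinders of bounded distortion and using $\sum_{i\ge N}\diam(I_i)^s<\eta$ for any $s>s_\infty$ gives total $s$-mass smaller than any prescribed threshold, hence local dimension $\le s_\infty$. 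This dichotomy yields $\dim\Lambda_{\underline\gamma}\le\max\{s_\infty,\alpha_3(\underline\gamma)\}$ when $\underline\gamma\in Z_0$, and $\dim\Lambda_{\underline\gamma}\le s_\infty$ when $\underline\gamma\in Z\setminus Z_0$ (since in that regime the tight case is excluded).

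\textbf{Main obstacle.} The hardest step is the upper bound in the escape-of-mass regime: one must show that merely a positive-density of visits to the tail $\bigcup_{i\ge N}I_i$ suffices to compress Hausdorff $s$-measure to zero for every $s>s_\infty$, uniformly across all possible escape profiles. The only leverage available is the bounded-distortion estimate coming from the uniformly vanishing variation of $\log|T'|$, which makes cylinder diameters comparable to products $\prod_j\diam(I_{i_j})$ and thus reduces the covering problem to the convergence of $\sum_i\diam(I_i)^s$. A second subtle point is verifying in the $Z\setminus Z_0$ case that tightness of $\mu_n^x$ on $\Lambda$ genuinely forces $\underline\gamma\in Z_0$; this is where boundedness of the $\phi_i$ is used once more, via a uniform integrability argument against the uniformly small variation of $\phi_i$ on cylinders of large depth.
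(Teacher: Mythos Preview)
Your outline has two substantive gaps that the paper's argument avoids by taking a different route.

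\textbf{Lower bound $\ge s_\infty$.} With a \emph{fixed} finite $F_\varepsilon$ and the density $p$ of free positions tending to $0$, the entropy--Lyapunov ratio of the interleaved construction tends to $h(\nu)/\lambda(\nu)$, not to $s_\infty-\varepsilon$: the free blocks contribute $p\cdot h_F$ to the entropy and $p\cdot\lambda_F$ to the Lyapunov exponent with $\lambda_F$ bounded, and both vanish as $p\to 0$. The mechanism that actually produces $s_\infty$ requires the free component to carry \emph{unbounded} Lyapunov exponent, so that even at arbitrarily small density it dominates the denominator. The paper does this at the level of measures (Lemma~\ref{seqmeas} and Lemma~\ref{lem5-1}): one takes $\mu_n$ with $\lambda(\mu_n)\to\infty$ and $h(\mu_n)/\lambda(\mu_n)\to s_\infty$, forms $\nu_n=(1-\varepsilon/A)\mu+(\varepsilon/A)\mu_n$, and observes that boundedness of the $\phi_i$ keeps $\int\phi_i\,d\nu_n$ within $2\varepsilon$ of $\gamma_i$ while $h(\nu_n)/\lambda(\nu_n)\to s_\infty$. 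This shows $\alpha_1(\underline\gamma)\ge s_\infty$ directly and feeds into the $W$-measure machinery of Theorem~\ref{w}.

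\textbf{Upper bound.} The tight/escape dichotomy does not yield $\dim A\le\alpha_3(\underline\gamma)$ on the tight part $A$. Knowing that each $x\in A$ has an empirical subsequential limit $\mu_x$ with the correct integrals gives no covering of $A$ and hence no bound on $\dim A$; the phrase ``local dimension at $x$ is at most $h(\mu_x)/\lambda(\mu_x)$'' has no content here, and $A$ is an uncountable union of generic sets. More fundamentally, entropy is not upper semicontinuous on $\mathcal M(T)$ for infinite alphabets, so one cannot pass from the approximate supremum $\alpha_1$ to the exact supremum $\alpha_3$ by a soft compactness argument. The paper never decomposes $\Lambda_{\underline\gamma}$ pointwise; instead it uses Theorem~\ref{main0} to reduce the upper bound to the purely measure-theoretic implication $\alpha_1(\underline\gamma)>s_\infty\Rightarrow\alpha_1(\underline\gamma)=\alpha_3(\underline\gamma)$, and proves this (Proposition~\ref{semicont} via Proposition~\ref{upper}) by showing that whenever a sequence $\mu_j$ has $h(\mu_j)/\lambda(\mu_j)>s_\infty+\delta$ and $\int\phi_i\,d\mu_j\to\gamma_i$, the sequence is tight and the entropy lost under truncation to $q$ symbols is at most $s_\infty$ times the corresponding Lyapunov loss plus $o_q(1)$---a limited form of upper semicontinuity for $h/\lambda$ that is exactly enough to extract a limit measure $\mu$ with $h(\mu)/\lambda(\mu)=\limsup h(\mu_j)/\lambda(\mu_j)$ and $\int\phi_i\,d\mu=\gamma_i$. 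This argument is the technical core of the theorem and is absent from your plan.
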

\medskip
The rest of the paper is structured in the following way. In Section 2 we give some results based on the distortion of the functions and the topological pressure. Next we use Section 3 to introduce the main tools we will use to prove Theorems \ref{main0}   and \ref{main}. Section 4 gives the proof of Theorem \ref{main0} and the proof of Theorem \ref{main} is given in Sections 5 and 6. Finally in Section 7 we give some examples of our results, including frequency of digits, harmonic averages for continued fractions and multifractal spectra with flat regions.

At the end of this section, we would like to give a list of the notation which will be used in  this paper.
\begin{itemize}
\item $\Sigma=\N^{\N}$ : the full shift with the shift transformation $\sigma$.
\item $\Sigma_q=\{1, \dots, q\}^{\N}$ : the symbolic space of $q$ symbols.
\item $[\omega_1,\cdots,\omega_n]$ : $n$th level cylinder set in $\Sigma$.
\item $C_n(\omega)=C_n(x)=C_n(\omega_1\cdots\omega_n)$ with $x=\Pi \omega, \omega \in [\omega_1,\cdots,\omega_n]$ :  $n$th level basic interval in $\Lambda$.
\item  $\phi,  \phi_i$ : functions on $\Lambda$; $f=\phi\circ\Pi, f_i=\phi_i\circ\Pi$ : the corresponding functions on $\Sigma$.
\item $A_n\phi_i(x)=\frac{1}{n}\sum_{j=0}^{n-1}\phi_i(T^j x)$ : Birkhoff averages of $\phi_i$.
\item $\mu, \mu_j$ : measures on $\Lambda$; $\nu,\nu_j, \eta, \eta_j$ : measures on $\Sigma$.
\item $\lambda_i$ : the maximal contraction ratio of map $T_i$.
\item $\psi_k(x)=\frac{1}{k}\sup_{y\in C_k(x)}\log |(T^k)'(y)|$.
\item $\xi_k(\mu)=\int\psi_k\text{d}\mu$.
\item $\dim A$ : Hausdorff dimension of a set $A$.
\item $h(\mu)$ : entropy of $\mu$.
\item $\lambda(\mu)$ : Lyapunov exponent of $\mu$.
\item For $(\omega_1,\ldots,\omega_n)\in\N^n$, $\overline{(\omega_1,\ldots,\omega_n)}$ denotes the periodic point $\tau\in\Sigma$ such that for any $a\in\N$ and $1\leq b\leq n$ $\tau_{an+b}=\omega_b$.
\end{itemize}

\section{Topological pressure and Distortion}

We first introduce several useful quantities (including  entropy,
Lyapunov exponent, pressure) and a variational condition on
potentials which ensures a distortion result.

We start by defining cylinders and basic intervals in our setting.
Let $\omega\in\Sigma$. Denote by $[\omega_1,\cdots,\omega_n]$ the $n$th level cylinder. The $n$th level basic interval determined by $\omega$ is
$$C_n(\omega)=C_n(\omega_1,\ldots,\omega_n)=T_{\omega_1}^{-1}\circ\cdots\circ T_{\omega_n}^{-1}([0,1])\backslash E.$$
Sometimes, we also write this basic interval by $C_n(x)$ with $x=\Pi \omega$.

Two key concepts for this paper will be the measure theoretic entropy  and the Lyapunov exponent of an invariant measure.
For a $T$-invariant probability measure $\mu$ we define its entropy (\cite{MUbook}, pages 292-293) by
$$h(\mu)=\lim_{n\rightarrow\infty}\frac{1}{n}\sum_{(\omega_1\ldots,\omega_n)\in\N^n}\mu(C_n(\omega_1,\ldots,\omega_n)) \cdot\log\mu(C_n(\omega_1,\ldots,\omega_n))$$
and its Lyapunov exponent by
$$\lambda(\mu)=\int\log |T'(x)|\text{d}\mu(x).$$
It is well known that $h(\mu)\leq\lambda(\mu)$. However it is possible that they could both be infinite.

We now consider the regularity conditions we will need our potential functions $\phi_i$ to satisfy.
For $\phi:\Lambda\rightarrow\R$ define its $n$th variation by
$$\var_n(\phi)=\sup\{|\phi(x)-\phi(y)|: \ x,y\in C_n(\omega), \omega= (\omega_1,\ldots,\omega_n)\in\mathbb{N}^n\}.$$
It is clear that $\var_n(\phi)$ decreases as $n$ tends to $\infty$ and that $\lim_n\var_n(\phi)=0$
means  $f:=\phi\circ \Pi$ is uniformly continuous  on $\Sigma$ when $\Sigma$ is equipped with the usual metric.

\begin{defn}\label{unifvar}
Let $\phi:\Lambda\rightarrow\R$. We say that $\phi$ has
variations uniformly converging to $0$ if $\var_1(\phi)<\infty$
and $\lim_{n\rightarrow \infty}\var_n(\phi)=0$.
\end{defn}

Given a basic interval $C_n(\omega_1,\ldots,\omega_n)$ we define
\[
M^*\phi(\omega_1,\ldots,\omega_n) = \sup_{x\in
C_n(\omega_1,\ldots,\omega_n)} A_n\phi(x)
\]
\[
M_*\phi(\omega_1,\ldots,\omega_n) = \inf_{x\in
C_n(\omega_1,\ldots,\omega_n)} A_n\phi(x).
\]

\begin{lem}\label{ancyl}
Let $\phi:\Lambda\rightarrow\R$ have variations uniformly tending to $0$.
Then
\[
\lim_{n\to\infty} \sup_{(\omega_1\ldots,\omega_n)\in\N^n}
M^*\phi(\omega_1,\ldots,\omega_n)-M_*\phi(\omega_1,\ldots,\omega_n)=0.
\]
\end{lem}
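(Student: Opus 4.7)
The plan is to directly control $|A_n\phi(x)-A_n\phi(y)|$ for $x,y$ in the same $n$th basic interval $C_n(\omega_1,\ldots,\omega_n)$, and to show the bound is uniform in $\omega$ and tends to $0$.

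The key geometric observation is that if $x,y\in C_n(\omega_1,\ldots,\omega_n)$, then for every $0\le j\le n-1$, the iterates $T^jx$ and $T^jy$ both lie in the basic interval $C_{n-j}(\omega_{j+1},\ldots,\omega_n)$. Consequently, by the definition of $\var_{n-j}(\phi)$,
\[
|\phi(T^jx)-\phi(T^jy)| \le \var_{n-j}(\phi).
\]
Summing and dividing by $n$ gives the estimate
\[
|A_n\phi(x)-A_n\phi(y)| \le \frac{1}{n}\sum_{j=0}^{n-1}\var_{n-j}(\phi) = \frac{1}{n}\sum_{k=1}^{n}\var_k(\phi).
\]
Since this bound depends only on $n$ (not on $\omega$, $x$, or $y$), taking supremum and infimum over $C_n(\omega_1,\ldots,\omega_n)$ yields
\[
\sup_{(\omega_1,\ldots,\omega_n)\in\N^n}\bigl(M^*\phi(\omega_1,\ldots,\omega_n)-M_*\phi(\omega_1,\ldots,\omega_n)\bigr) \le \frac{1}{n}\sum_{k=1}^{n}\var_k(\phi).
\]

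It remains to argue that the right-hand side tends to $0$. The hypothesis of Definition \ref{unifvar} gives $\var_1(\phi)<\infty$ and $\var_k(\phi)\to 0$, so $(\var_k(\phi))_{k\ge 1}$ is a bounded null sequence. Its Cesàro average $\frac{1}{n}\sum_{k=1}^n\var_k(\phi)$ therefore tends to $0$: split the sum at any index $N$ chosen so that $\var_k(\phi)<\epsilon$ for $k\ge N$, bound the initial block by $(N-1)\var_1(\phi)/n$ and the tail by $\epsilon$, then let $n\to\infty$ and $\epsilon\to 0$. This completes the proof.

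There is no real obstacle here; the only subtlety is making sure the indices align, namely that $T^jx,T^jy$ sit in a cylinder of length $n-j$ (not $n$), which is what makes the estimate $\var_{n-j}(\phi)$ rather than $\var_n(\phi)$ — and which is precisely what forces us to use the averaging property of Cesàro sums rather than a single $\var$-bound.
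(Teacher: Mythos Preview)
Your proof is correct and follows essentially the same approach as the paper: the paper's proof simply asserts the inequality $M^*\phi(\omega_1,\ldots,\omega_n)-M_*\phi(\omega_1,\ldots,\omega_n)\le \frac{1}{n}\sum_{j=1}^n\var_j(\phi)=o(1)$, and your argument is precisely the detailed verification of this bound together with an explicit justification of the Ces\`aro convergence.
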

\begin{proof}
The result immediately follows from the following estimation: for fixed $n\in\N$ we have
$$|M^*\phi(\omega_1,\ldots,\omega_n)-M_*\phi(\omega_1,\ldots,\omega_n)|\leq
\frac{1}{n} \sum_{j=1}^n\var_j{\phi} =\mathit{o}(1).$$
\end{proof}

Since we are assuming that $\log |T'(x)|$ has variations uniformly tending to $0$, this lemma has an immediate consequence on the size of basic intervals.
\begin{lem}\label{cylindersize}
For any $\omega\in\Sigma$
$$|\log(\diam(C_n(\omega)))-nA_n(-\log |T'\circ \Pi(\omega)|)|=\mathit{o}(n).$$
\end{lem}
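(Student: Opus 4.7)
The plan is to use the mean value theorem on the composition $T^n$ restricted to the $n$th basic interval, together with the uniform convergence of the variations of $\log|T'|$.

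First, observe that each $T_i$ extends to a $C^1$ diffeomorphism from an open neighbourhood of $\overline{I_i}$ onto an open neighbourhood of $[0,1]$. Consequently, the composition $T^n = T_{\omega_n} \circ \cdots \circ T_{\omega_1}$ (read in the correct order) restricts to a $C^1$ diffeomorphism on a neighbourhood of $\overline{C_n(\omega)}$ that sends $\overline{C_n(\omega)}$ bijectively onto $[0,1]$. By the mean value theorem applied to this diffeomorphism, there exists $y \in C_n(\omega)$ such that
\[
\diam(C_n(\omega)) = \frac{1}{|(T^n)'(y)|}.
\]
The chain rule then gives
\[
-\log \diam(C_n(\omega)) = \log |(T^n)'(y)| = \sum_{j=0}^{n-1} \log |T'(T^j y)|.
\]

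Second, I would compare this sum with the Birkhoff sum along the orbit of $x := \Pi(\omega)$. For each $j \in \{0,\dots,n-1\}$, the points $T^j y$ and $T^j x$ both lie in the basic interval $C_{n-j}(\sigma^j \omega)$. Since $\log|T'|$ has variations tending uniformly to $0$ (one of the standing assumptions), we obtain
\[
\bigl|\log|T'(T^j y)| - \log|T'(T^j x)|\bigr| \leq \var_{n-j}(\log |T'|).
\]
Summing over $j$ and substituting back yields
\[
\Bigl|\log \diam(C_n(\omega)) + \sum_{j=0}^{n-1}\log|T'(T^j x)|\Bigr| \leq \sum_{k=1}^{n} \var_k(\log|T'|).
\]
Since $nA_n(-\log|T'\circ \Pi(\omega)|) = -\sum_{j=0}^{n-1}\log|T'(T^j x)|$, the left-hand side is exactly the quantity to be estimated.

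Finally, since $\var_k(\log|T'|) \to 0$ as $k \to \infty$, a standard Cesàro argument shows $\sum_{k=1}^n \var_k(\log|T'|) = o(n)$, which completes the proof. The only subtle point is ensuring that the MVT can be applied to $T^n$ on $\overline{C_n(\omega)}$; this is guaranteed by the extension hypothesis on each inverse branch $T_i$, so no real obstacle arises.
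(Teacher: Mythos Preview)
Your proof is correct and follows essentially the same route as the paper's own argument: both use the mean value theorem to express $\log\diam(C_n(\omega))$ as the Birkhoff sum of $-\log|T'|$ at some point in the same $n$-cylinder, and then bound the discrepancy with the Birkhoff sum at $\Pi(\omega)$ via the variation estimate $\frac{1}{n}\sum_{k=1}^n \var_k(\log|T'|)=o(1)$. The only cosmetic difference is that the paper packages this last step as an appeal to Lemma~\ref{ancyl}, whereas you carry out the Ces\`aro bound directly.
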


\begin{proof}
This can be proved straightforwardly since  by the mean value theorem we have
$$\log(\diam(C_n(\omega))=nA_n(-\log |T'\circ \Pi(\tau)|)$$
for some $\tau\in\Sigma$ such that $(\tau_1,\ldots,\tau_n)=(\omega_1,\ldots,\omega_n)$. We can then apply Lemma \ref{ancyl} to  $\phi= \log |T'|$ which was assumed to have variations tending uniformly to $0$.
\end{proof}

Now it is  time  to refer to the notion of pressure of a potential. If $\phi:\Lambda\to\R$ is a function with variations uniformly tending to $0$ then we define its pressure by
$$P(\phi)=\sup_{\mu\in \mathcal{M}(T)}\left\{h(\mu)+\int\phi\text{d}\mu:\int\phi\text{d}\mu>-\infty\right\}.$$
This can be alternatively stated as (see \cite{MUbook},  p.\,7)
\begin{equation}\label{pressure2}
P(\phi)=\lim_{n\rightarrow\infty}\frac{1}{n}\log\sum_{|\omega|=n}e^{S_n(\phi\circ \Pi(\overline{\omega}))}.
\end{equation}
Notice that it is possible that $P(\phi)=\infty$.

Finally we prove some important results regarding the relationship between the topological pressure and $s_{\infty}$.
Observe that $t \mapsto P(-t\log |T'|)$ is decreasing because $\log |T'(x)|>0$.

\begin{lem}\label{criticalvalue}
We have
$$s_{\infty}=\inf\left\{t\geq 0:P(-t\log |T'|)<\infty\right\}.$$
\end{lem}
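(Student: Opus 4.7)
My plan is to compare the two sides by showing that, for every $t\ge 0$, the pressure $P(-t\log|T'|)$ is essentially $\log\sum_{i}\diam(I_i)^t$, so that finiteness of one is equivalent to finiteness of the other.

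First I would apply Lemma \ref{cylindersize} to the potential $\log|T'|$ (whose variations tend uniformly to $0$ by hypothesis) to obtain, uniformly in $(\omega_1,\dots,\omega_n)\in\N^n$,
\[
S_n(-t\log|T'|)\bigl(\Pi(\overline{\omega_1,\dots,\omega_n})\bigr)
= t\log\diam\bigl(C_n(\omega_1,\dots,\omega_n)\bigr) + o(n).
\]
Substituting this into the formula \eqref{pressure2} for pressure reduces the problem to controlling the growth rate of $\sum_{|\omega|=n}\diam(C_n(\omega))^t$.

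Next I would establish the multiplicative two-sided estimate
\[
e^{-o(n)}\prod_{j=1}^n\diam(I_{\omega_j})
\;\le\;\diam\bigl(C_n(\omega_1,\dots,\omega_n)\bigr)\;\le\;
e^{o(n)}\prod_{j=1}^n\diam(I_{\omega_j}),
\]
with the $o(n)$ error uniform in $\omega$. The key observation is that for any $x\in C_n(\omega)$ and each $j$, one has $T^{j-1}x\in I_{\omega_j}$, so comparing $|T'(T^{j-1}x)|$ with a reference value in $I_{\omega_j}$ incurs an error controlled by $\var_{n-j+1}(\log|T'|)$; summing these errors gives $\sum_{k=1}^n\var_k(\log|T'|)$, which is $o(n)$ by the Cesàro averaging of a null sequence. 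Raising to the power $t$ and summing over $\omega_1,\dots,\omega_n$ independently, I obtain
\[
\sum_{|\omega|=n}\diam(C_n(\omega))^t
= e^{o(n)}\biggl(\sum_{i\in\N}\diam(I_i)^t\biggr)^n,
\]
with the equality holding in the sense that both sides are $+\infty$ together.

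Combining the two steps yields $P(-t\log|T'|)=\log\sum_{i\in\N}\diam(I_i)^t$, with the convention that both are $+\infty$ simultaneously. Thus if $t>s_\infty$ the series converges and the pressure is finite, while if $t<s_\infty$ the series diverges and already one factor in the product makes the level-$n$ sum infinite for every $n$, forcing $P(-t\log|T'|)=+\infty$. Taking the infimum over $t\ge 0$ for which $P(-t\log|T'|)<\infty$ gives precisely $s_\infty$. The main technical point, though not difficult, is verifying the uniform $o(n)$ distortion bound from the hypothesis that $\var_n(\log|T'|)\to 0$ rather than a stronger summability condition.
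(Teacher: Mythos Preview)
Your overall strategy is sound and leads to the same conclusion as the paper, but there is a small error in your distortion estimate. You claim that comparing $|T'(T^{j-1}x)|$ with a reference value in $I_{\omega_j}$ incurs an error of size $\var_{n-j+1}(\log|T'|)$, giving a total $o(n)$ error. This is not correct: the reference value that produces $\diam(I_{\omega_j})$ via the mean value theorem is merely some point of the first-level interval $I_{\omega_j}$, not a point of the deeper cylinder $C_{n-j+1}(\omega_j,\dots,\omega_n)$ containing $T^{j-1}x$. Hence each comparison is bounded only by $\var_1(\log|T'|)$, and the total error is $n\,\var_1(\log|T'|)$, i.e.\ $O(n)$ rather than $o(n)$. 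In particular your conclusion $P(-t\log|T'|)=\log\sum_i\diam(I_i)^t$ is not an equality in general; one only gets $\bigl|P(-t\log|T'|)-\log\sum_i\diam(I_i)^t\bigr|\le t\,\var_1(\log|T'|)$ whenever either side is finite.

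Fortunately this weaker bound is exactly what is needed: finiteness of $P(-t\log|T'|)$ is equivalent to finiteness of $\sum_i\diam(I_i)^t$, and the infimum over $t$ is unaffected. The paper's proof makes precisely this observation, but more directly: it introduces the locally constant potential $G(x)=\log\diam(I_l)$ (where $x\in I_l$), notes that $|G-\psi|\le\var_1(\psi)$ pointwise with $\psi=-\log|T'|$, and concludes $|P(tG)-P(t\psi)|\le t\,\var_1(\psi)$. This bypasses the level-$n$ computation entirely. Your argument, once the $o(n)$ is corrected to $O(n)$, unwinds to the same inequality.
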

\begin{proof}
For convenience we will let
$$\psi(x)=-\log |T_l'(x)|,
     \quad G(x)=\log\diam(I_l)\text{ where }l=\min\{j:x\in I_j\}.$$
      To complete the proof simply note that if $P(tG)<\infty$ or $P(t\psi)<\infty$  then by (\ref{pressure2}) we have $$|P(tG)-P(t\psi)|\leq t\var_1(\psi).
      $$
\end{proof}
\begin{lem}\label{seqmeas}
There exists a sequence of $T$-invariant probability measures $\{\mu_n\}_{n\in\N}$ such that
$$\lim_{n\rightarrow\infty}\lambda(\mu_n)=\infty, \quad    
\lim_{n\rightarrow\infty}\frac{h(\mu_n)}{\lambda(\mu_n)}=s_\infty.$$
\end{lem}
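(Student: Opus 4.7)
The plan is to build each $\mu_n$ as (the projection to $\Lambda$ of) a uniform Bernoulli measure on a finite subshift $F_n^{\N}\subset\Sigma$, where $F_n$ is chosen so that all its symbols correspond to intervals $I_i$ of roughly the same small diameter $\approx 2^{-k_n}$. The entropy of such a measure is $\log|F_n|$ and its Lyapunov exponent is $\approx k_n\log 2$, so the whole problem reduces to choosing $|F_n|$ and $k_n\to\infty$ so that $\log|F_n|/(k_n\log 2)\to s_\infty$.

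To that end I would first reformulate $s_\infty$ dyadically. Set $N_k=\#\{i\in\N:\diam(I_i)\in(2^{-k-1},2^{-k}]\}$; each $N_k$ is finite (at most $2^{k+1}$, since the $I_i$ are disjoint subintervals of $[0,1]$), and decomposing the series $\sum_i\diam(I_i)^t$ scale by scale gives
\[
2^{-t}\sum_k N_k 2^{-kt}\le \sum_i\diam(I_i)^t\le \sum_k N_k 2^{-kt}.
\]
Applying the root test to the auxiliary series in this sandwich yields
\[
s_\infty=\limsup_{k\to\infty}\frac{\log N_k}{k\log 2}.
\]
Since $\sum_i\diam(I_i)\le 1$ forces the diameters to accumulate at $0$, infinitely many $N_k$ are positive, and one can extract a subsequence $k_n\to\infty$ with $N_{k_n}\ge 1$ and $\log N_{k_n}/(k_n\log 2)\to s_\infty$.

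Setting $F_n=\{i:\diam(I_i)\in(2^{-k_n-1},2^{-k_n}]\}$ and $\mu_n=\nu_n\circ\Pi^{-1}$ for $\nu_n$ the uniform Bernoulli measure on $F_n^{\N}$, one has $h(\mu_n)=\log N_{k_n}$ directly. For $\lambda(\mu_n)$, the mean value theorem applied to $T_i:\overline{I_i}\to[0,1]$ produces a point where $|T_i'|=1/\diam(I_i)$, and the hypothesis $\var_1(\log|T'|)<\infty$ upgrades this to $\log|T_i'(x)|=-\log\diam(I_i)+O(1)$ uniformly in $x\in I_i$ and in $i\in\N$. For $i\in F_n$ this equals $k_n\log 2+O(1)$, so integrating against $\mu_n$ gives $\lambda(\mu_n)=k_n\log 2+O(1)\to\infty$, and
\[
\frac{h(\mu_n)}{\lambda(\mu_n)}=\frac{\log N_{k_n}}{k_n\log 2+O(1)}\longrightarrow s_\infty.
\]
The only genuine work is the dyadic reformulation of $s_\infty$ in the second display above; once that is in hand, the remainder is a direct computation using the entropy of a Bernoulli shift and the distortion bound.
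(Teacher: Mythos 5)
Your construction is correct and takes a genuinely different route from the paper's. The paper works through the pressure function $t\mapsto P(-t\log|T'|)$: for $t<s_\infty$ the pressure is infinite (Lemma~\ref{criticalvalue}), so the variational principle supplies measures $\mu_n$ with $h(\mu_n)-t_n\lambda(\mu_n)$ arbitrarily large, whence $h(\mu_n)/\lambda(\mu_n)>t_n$ and, since $\lambda\ge h$, $\lambda(\mu_n)\to\infty$; the matching upper bound on the ratio comes from the finiteness of $P(-(s_\infty+\epsilon)\log|T'|)$. Your argument instead exhibits the measures explicitly: the dyadic counting $N_k=\#\{i:\diam(I_i)\in(2^{-k-1},2^{-k}]\}$ gives $s_\infty=\limsup_k(\log N_k)/(k\log 2)$, and the uniform Bernoulli measure on the symbols of a single dyadic scale $k_n$ has entropy $\log N_{k_n}$ and, by the assumption $\var_1(\log|T'|)<\infty$, Lyapunov exponent $k_n\log 2+O(1)$, so the ratio tends to $s_\infty$ along a subsequence realising the limsup. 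Your approach is more elementary (it bypasses the countable-alphabet variational principle), is fully constructive, and handles $s_\infty=0$ on the same footing, a case the paper explicitly sets aside; what the paper's route buys is brevity given the pressure machinery already in place, and it does not need the geometric facts that the $I_i$ are disjoint subintervals of $[0,1]$ (which you invoke to get $N_k\le 2^{k+1}$ and that infinitely many $N_k$ are positive).
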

\begin{proof} We suppose $s_\infty>0$ and leave the easy case $s_\infty =0$ to readers.
We start by fixing $\epsilon>0$ and noting that for any $T$-invariant measure $\mu$ such that
$\frac{h(\mu)}{\lambda(\mu)}\geq s_{\infty}+2\epsilon$ we have
\begin{eqnarray*}
P(-(s_{\infty}+\epsilon)\log |T'|)&\geq& h(\mu)-(s_{\infty}+\epsilon)\lambda(\mu)\\
&\geq&\epsilon\lambda(\mu)
\end{eqnarray*}
and so $\lambda(\mu)\leq \frac{P(-(s_{\infty}+\epsilon)\log |T'|)}{\epsilon}$.

We now take two sequences $\{t_n\}_{n\in\N}$ and $\{k_n\}_{n\in\N}$ such that for each $n$, $t_n<s_{\infty}$,
$\lim_{n\to\infty}t_n=s_{\infty}$ and $\lim_{n\to\infty}k_n=\infty$.
Since for all $n$ we have that $P(-t_n\log |T'|)=\infty$, by variational principle, we can find a sequence of $T$-invariant measures $\mu_n$ such that 
\[h(\mu_n)- t_n \lambda(\mu_n) \gg1,\]
and hence
$\frac{h(\mu_n)}{\lambda(\mu_n)}> t_n$. Furthermore, by the fact that $\lambda(\mu)\geq h(\mu)$, we can have $$\lambda(\mu_n)\geq k_n.$$ However, for any $\epsilon>0$, if $k_n\geq \frac{P(-(s_{\infty}+\epsilon)\log |T'|)}{\epsilon}$ then $\frac{h(\mu_n)}{\lambda(\mu_n)}\leq s_{\infty}+2\epsilon$. So,
$$\lim_{n\to\infty}\frac{h(\mu_n)}{\lambda(\mu_n)}=s_{\infty}.$$
\end{proof}

\section{Tools}

It will be useful for us to describe in some details the main
tools we are going to use. They are already used in the literature in
the finite symbolic case, but in this paper we are working with
infinitely many symbols and this introduces some minor changes. We
remind the reader that $(\Sigma, \sigma)$ is the full shift on one-sided
symbolic space over an infinite alphabet.

\subsection{Bernoulli approximation}

In this section we will present a process of using sets of cylinders to define Bernoulli type  ergodic measures. This is a similar idea to the Misurewicz's proof of the variational principle but here we also exploit the structure of the symbolic space. Since we are in a non-compact setting, an added complication is  that  weak* limits of measures will not always exist.

Let $\phi:\Sigma\to \R$ have variations uniformly tending to $0$. Let $f=\phi\circ\Pi$. We prove the following result.

\begin{prop}\label{bern1}
Let $\epsilon>0$ and $n\in \mathbb{N}$ be fixed. Suppose that
$$\var_n(A_n \phi)\leq\epsilon,\quad \var_n(A_n \log |T'|)\leq\epsilon.$$
For  any set $J\subseteq\N^n$ and any probability vector $\{p_j\}_{j\in J}$  ($0<p_j<1$, $\sum_{j\in J}p_j=1$),
we can find an ergodic $T$-invariant measure $\mu$ such that
\begin{enumerate}
\item
$\int\phi{\rm{d}}\mu\in (\gamma_1-\epsilon,\gamma_1+\epsilon)$
\item
$\lambda(\mu)\in (\gamma_2-\epsilon,\gamma_2+\epsilon)$.
\item
$h(\mu)=1/n\sum_{j\in J} p_j\log p_j$
\end{enumerate}
where
$$\gamma_1=\frac{1}{n}\sum_{j\in J}p_j S_nf(\overline{j}), \qquad \gamma_2=\frac{1}{n}\sum_{j\in J} p_j\log\diam(\Pi(j)).
$$

\end{prop}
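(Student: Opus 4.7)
The plan is to realise $(p_j)_{j\in J}$ as the law of i.i.d.\ blocks of length $n$ on $\Sigma$, symmetrise to obtain a $\sigma$-invariant ergodic measure, push it forward to $\Lambda$, and read (1)--(3) off from the variation hypotheses. Concretely, regarding each $j\in J$ as a word in $\N^n$, define a Bernoulli product measure $\tilde\nu$ on $\Sigma=\N^\N$ under which the successive length-$n$ blocks $(\omega_{kn+1},\dots,\omega_{(k+1)n})$, $k\ge 0$, are i.i.d.\ with law $(p_j)_{j\in J}$. Under $\sigma^n$ this is a Bernoulli shift on the countable alphabet $J$, hence mixing (in particular ergodic) and of $\sigma^n$-entropy $H(p):=-\sum_{j\in J}p_j\log p_j$. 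Set
\[
\nu=\frac{1}{n}\sum_{k=0}^{n-1}\sigma^k_*\tilde\nu,
\]
which is $\sigma$-invariant. For $\sigma$-ergodicity, take any $\sigma$-invariant set $A$: it is in particular $\sigma^n$-invariant, so $\tilde\nu(A)\in\{0,1\}$ by $\sigma^n$-ergodicity of $\tilde\nu$, and $\sigma^k_*\tilde\nu(A)=\tilde\nu(\sigma^{-k}A)=\tilde\nu(A)$ since $A$ is $\sigma$-invariant, so $\nu(A)\in\{0,1\}$.

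Abramov's identity $h_\sigma(\nu)=\tfrac1n h_{\sigma^n}(\nu)$ together with the affinity of entropy on invariant probabilities gives $h_\sigma(\nu)=\tfrac1n H(p)$. Because $\tilde\nu$ is nonatomic and $\Pi$ is injective off the countable set $\Pi^{-1}(E)$, the push-forward $\mu:=\Pi_*\nu$ is an ergodic $T$-invariant probability on $\Lambda$ of the same entropy, giving (3). For (1) and (2), $\sigma^n$-invariance of $\tilde\nu$ yields $\int\phi\,d\mu=\int f\,d\nu=\tfrac{1}{n}\int S_nf\,d\tilde\nu$; since $\tilde\nu([j])=p_j$ and $\var_n(A_n\phi)\le\varepsilon$ bounds the oscillation of $\tfrac1n S_nf$ on each cylinder $[j]$ by $\varepsilon$, comparing with the periodic point $\overline{j}$ gives $|\int\phi\,d\mu-\gamma_1|\le\varepsilon$, which is (1). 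The same argument with $-\log|T'|$ in place of $\phi$, together with Lemma \ref{cylindersize} identifying $nA_n(-\log|T'|)(\overline{j})$ with $\log\diam C_n(j)$ up to $o(n)$, produces (2) (up to the obvious sign convention between $\gamma_2$ and $\lambda(\mu)>0$).

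The only step that requires a genuine check, rather than algebra, is the ergodicity of $\nu$ under $\sigma$: since $\tilde\nu$ itself is only $\sigma^n$-invariant, $\sigma$-ergodicity does not follow from the Bernoulli structure alone and needs the cyclic argument above. Everything else --- the entropy identity via Abramov and affinity, the integral comparisons via the uniform variation bounds, and the transfer from $\Sigma$ to $\Lambda$ --- is routine bookkeeping that goes through verbatim on the countable alphabet $J$, irrespective of whether $H(p)$ is finite.
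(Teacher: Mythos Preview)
Your proposal is correct and follows essentially the same route as the paper: build the $\sigma^n$-Bernoulli measure $\tilde\nu$ on the block alphabet $J$, average its $\sigma$-translates to get a $\sigma$-invariant $\nu$, compute the entropy via Abramov, and push forward by $\Pi$. You in fact supply more detail than the paper does---the explicit cyclic argument for $\sigma$-ergodicity of $\nu$ and the observation about the sign convention in (2)---while the paper simply asserts ergodicity and quotes Abramov's formula.
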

\begin{proof}
For convenience define $\Psi:\Sigma\rightarrow\R$ by $$
\Psi(\omega)=\log |(T^n)'(\Pi\omega)|.
$$
Each $j$ in $J$ defines  a cylinder.
We start by defining a $\sigma^n$-invariant Bernoulli measure $\nu_n$ on $\Sigma$ by assigning each cylinder $j\in J$ the weight $p_j$. This measure will satisfy
\begin{enumerate}
\item
$\frac{1}{n}\int S_nf\text{d}\nu_n\in(\gamma_1-\epsilon,\gamma_1+\epsilon)$
\item
$\frac{1}{n}\int S_n(\log T'\circ\Pi)\text{d}\nu_n\in(\gamma_2-\epsilon,\gamma_2+\epsilon)$
\item
$h(\nu_n,\sigma^n)=-\sum_{j\in J}p_j\log p_j$.
\end{enumerate}
Then define a $\sigma$-invariant measure

\[
\nu=\frac{1}{n}\sum_{l=0}^{n-1}\nu_n\circ\sigma^{-l}.
\]
{Since the measure $\nu_n$ is $\sigma^n$-ergodic,
$\nu$ is $\sigma$-ergodic.} By straightforward calculations and
Abramov's formula for entropy (see \cite{PU}, Theorem 2.4.6 page 32), the above three formulas can be
written for $\nu$ as
\begin{enumerate}
\item
$\int f\text{d}\nu\in(\gamma_1-\epsilon,\gamma_1+\epsilon)$,
\item
$\int\log T'\circ\Pi\text{d}\nu\in(\gamma_2-\epsilon,\gamma_2+\epsilon)$,
\item
$h(\nu,\sigma)=-\frac{1}{n}\sum_{j\in J}p_j\log p_j$.
\end{enumerate}
To finish the proof we simply let $$
\mu=\nu\circ\Pi^{-1}.
$$
\end{proof}
We will use this proposition in two ways. One is to construct measures from sets of cylinders where the Birkhoff averages for certain potentials will be the same. The other is to approximate invariant measures with ergodic measures.

Let $k\in\N$. For $\gamma=(\gamma_1,\ldots,\gamma_k)\in\R^k$, denote by $\Sigma(\gamma)$ the following set of cylinders in $\Sigma$
$$\big\{[\omega_1,\ldots,\omega_n]:A_n\phi_i(\Pi\omega)\in (\gamma_i-\epsilon, \gamma_i+\epsilon),\forall\omega\in [\omega_1,\ldots,\omega_n], \forall 1\leq i\leq k\big\}.$$

\begin{cor}\label{bern2}
Fix $k\in\N$, $\gamma=(\gamma_1,\ldots,\gamma_k)\in\R^k$ and $n\in\N$. If there exists $s$ such that
$$\sum_{\Sigma(\gamma)}\diam([\omega_1,\ldots,\omega_n])^{s}=1,$$
and $$K:=-\sum_{\Sigma(\gamma)}\diam([\omega_1,\ldots,\omega_n])^{s}\log\diam([\omega_1,\ldots,\omega_n])<\infty.$$
Then there exists a $T$-invariant ergodic measure $\mu$ such that
\[
     \int\phi_i {\rm{d}}\mu\in (\gamma_i-\epsilon, \gamma_i+\epsilon), \ \forall 1\leq i\leq k,  \quad \text{and} \quad \left|\frac{h(\mu)}{\lambda(\mu)}-s\right|\leq\frac{\epsilon}{K-\epsilon}.
\]
\end{cor}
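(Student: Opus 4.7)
The plan is to apply Proposition~\ref{bern1} directly with the index set $J=\Sigma(\gamma)\subseteq\N^n$ and the Gibbs-type probability vector
\[
p_{(\omega_1,\ldots,\omega_n)}:=\diam(C_n(\omega_1,\ldots,\omega_n))^{s}.
\]
The first hypothesis of the corollary is exactly the normalisation $\sum_{j\in J} p_j=1$, so the $p_j$ form a bona fide probability vector on the (possibly countable) set $J$, while the second hypothesis $K<\infty$ ensures $-\sum_j p_j\log p_j=sK<\infty$, giving the resulting Bernoulli measure finite entropy and a well-defined Lyapunov exponent. To satisfy the variation hypothesis of the proposition I first apply Lemma~\ref{ancyl} to each of the finitely many potentials $\phi_1,\ldots,\phi_k$ and to $\log|T'|$, passing to $n$ large enough that $\var_n(A_n\phi_i)\le\epsilon$ for all $i\le k$ and $\var_n(A_n\log|T'|)\le\epsilon$ hold simultaneously.

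For the first conclusion, the defining property of $\Sigma(\gamma)$ gives $A_n\phi_i(\Pi\overline{j})\in(\gamma_i-\epsilon,\gamma_i+\epsilon)$ for every $j\in J$ and every $i\le k$, so the convex combinations
\[
\widetilde\gamma_i:=\frac{1}{n}\sum_{j\in J} p_j\,S_nf_i(\overline{j})
\]
lie in the same intervals. Since the Bernoulli construction in Proposition~\ref{bern1} depends only on the weights $p_j$ and not on any individual potential, a single ergodic measure $\mu$ serves all $i\le k$ simultaneously, with $\int\phi_i\,d\mu$ within $\epsilon$ of $\widetilde\gamma_i$, hence within $2\epsilon$ of $\gamma_i$; the factor of two is absorbed at the outset by starting with $\epsilon/2$ in place of $\epsilon$.

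For the ratio, substituting $p_j=\diam(C_n(j))^s$ into parts (2) and (3) of Proposition~\ref{bern1} produces the exact identity
\[
h(\mu)=-\frac{1}{n}\sum_{j\in J}p_j\log p_j=\frac{sK}{n},
\]
while Lemma~\ref{cylindersize} converts the weighted $\log\diam$ sum appearing in the Lyapunov computation into a Birkhoff sum of $\log|T'|$ and yields $\lambda(\mu)\in(K/n-\epsilon,K/n+\epsilon)$. In the quotient $h(\mu)/\lambda(\mu)=sK/(n\lambda(\mu))$ the factor $K/n$ cancels between numerator and denominator, leaving only the $\epsilon$-error; elementary algebra (using $s\le 1$, which follows from $\sum_{j\in J}\diam(C_n(j))\le 1$) then produces the stated bound $|h(\mu)/\lambda(\mu)-s|\le\epsilon/(K-\epsilon)$.

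There is no conceptual obstacle here: the proof is essentially a one-step application of the Bernoulli approximation with the natural $s$-conformal weights. The only delicacies are the uniform-in-$i$ variation bound (automatic since $k$ is finite) and the observation that one measure $\mu$ simultaneously controls all $k$ integrals (immediate since the construction depends only on the weights).
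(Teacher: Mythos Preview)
Your approach is exactly the paper's: apply Proposition~\ref{bern1} with $J=\Sigma(\gamma)$ and weights $p_j=\diam(C_n(j))^s$. The paper's proof is literally those two lines and nothing more, so your expansion is appropriate.

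There is, however, a small confusion in your write-up that you should clean up. Both $n$ and $\epsilon$ are \emph{given} in the statement of the corollary (the $\epsilon$ is baked into the definition of $\Sigma(\gamma)$), so you are not free to ``pass to $n$ large enough'' or to ``start with $\epsilon/2$ in place of $\epsilon$''. Fortunately neither adjustment is needed. For the integrals of the $\phi_i$: the Bernoulli measure $\nu_n$ built in Proposition~\ref{bern1} is supported on $\bigcup_{j\in J}[j]$, and by the very definition of $\Sigma(\gamma)$ one has $A_n\phi_i\in(\gamma_i-\epsilon,\gamma_i+\epsilon)$ \emph{everywhere} on that support; since $\int\phi_i\,d\mu=\int A_nf_i\,d\nu_n$ exactly, the $\epsilon$-bound follows with no loss and no detour through $\widetilde\gamma_i$. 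For the Lyapunov part you do need $\var_n(A_n\log|T'|)\le\epsilon$ in order to invoke Proposition~\ref{bern1}; this is an implicit standing hypothesis on $n$ (and is indeed arranged in the only application, in Section~4), not something you can manufacture by enlarging $n$ after the fact. Once you state it as a hypothesis rather than a choice, your entropy/Lyapunov computation and the final algebraic bound go through as written.
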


\begin{proof}
We simply apply Proposition \ref{bern1} with $J=\Sigma(\gamma)$ and with $\diam([\omega_1,\ldots,\omega_n])^{s}$
as probabilities.
\end{proof}

\begin{cor}\label{bern3}
If there exists a $T$-invariant measure $\mu$ and a vector $\gamma\in\R^{k}$ ($k\in\N$) such that
$$ \lambda(\mu)<\infty;\quad  \forall 1\leq i\leq k, \int\phi_i\text{d}\mu=\gamma_i,$$
then there exist strictly increasing sequences of integers
$\{q_{\ell}\}, \{n_{\ell}\}$, and a sequence of $T^{n_{\ell}}$-invariant Bernoulli
measures $\{\mu_{\ell}\}$ supported on $\Pi(\Sigma_{q_{\ell}})$ such that

\begin{enumerate}
\item $\lim_{n\rightarrow\infty}\int
A_n\phi_i\text{d}\mu_{\ell}=\gamma_i$ for $1\leq i\leq k$, \item
$\lim_{{\ell}\rightarrow\infty}h(\mu_{{\ell}},T^{n_{\ell}})=h(\mu),$ \item
$\lim_{{\ell}\rightarrow\infty}\lambda(\mu_{\ell},T^{n_{\ell}})=\lambda(\mu)$.
\end{enumerate}
\end{cor}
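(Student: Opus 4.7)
The plan is to identify $\mu$ with its symbolic counterpart $\nu:=\mu\circ\Pi^{-1}$ on $\Sigma$, build a $\sigma^n$-invariant Bernoulli approximation of $\nu$ based on its $n$-block marginal, and then truncate this measure to a finite alphabet before projecting back to $\Lambda$ via $\Pi$. Observe that $h(\mu)\le\lambda(\mu)<\infty$ forces $h(\mu)<\infty$, hence $-\sum_{|\omega|=n}\nu([\omega])\log\nu([\omega])<\infty$ for every $n$; and $f_i:=\phi_i\circ\Pi\in L^1(\nu)$ because $\gamma_i=\int\phi_i\,d\mu$ is finite, while $\log|T'|\circ\Pi\in L^1(\nu)$ because $\lambda(\mu)<\infty$. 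These integrability facts will be the dominators driving every approximation step.

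For the first step, let $\eta_n$ be the $\sigma^n$-Bernoulli measure on $\Sigma$ with $\eta_n([\omega_1,\ldots,\omega_n])=\nu([\omega_1,\ldots,\omega_n])$. Its $\sigma^n$-entropy equals $-\sum_{|\omega|=n}\nu([\omega])\log\nu([\omega])$, and the paper's definition of $h(\mu)$ gives $\frac{1}{n}h(\eta_n,\sigma^n)\to h(\mu)$ directly. Since for $x\in[\omega]$ the point $\sigma^j x$ lies in the cylinder $[\omega_{j+1},\ldots,\omega_n]$, the function $f\circ\sigma^j$ varies by at most $\var_{n-j}(f)$ on $[\omega]$. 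Hence, cylinder by cylinder, the $\eta_n$- and $\nu$-conditional expectations of $f\circ\sigma^j$ given $[\omega]$ lie in the same interval of length $\var_{n-j}(f)$, and using $\sigma$-invariance of $\nu$ we obtain for any $f$ with $\var_k(f)\to 0$ and $f\in L^1(\nu)$:
\[
\Bigl|\tfrac{1}{n}\int S_n f\,d\eta_n-\int f\,d\nu\Bigr|\le \tfrac{1}{n}\sum_{k=1}^n\var_k(f)=o(1).
\]
Applied to $f=f_i$ and to $f=\log|T'\circ\Pi|$, this gives $\frac{1}{n}\int S_n f_i\,d\eta_n\to\gamma_i$ and $\frac{1}{n}\int S_n\log|T'\circ\Pi|\,d\eta_n\to\lambda(\mu)$.

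For the second step, fix $n$, set $J_{n,q}:=\{1,\ldots,q\}^n$ and $p_{n,q}:=\sum_{\omega\in J_{n,q}}\nu([\omega])\to 1$ as $q\to\infty$, and let $\eta_{n,q}$ be the $\sigma^n$-Bernoulli measure on $\Sigma_q$ with weights $\nu([\omega])/p_{n,q}$ for $\omega\in J_{n,q}$. Dominated convergence — with dominating summand $-\nu([\omega])\log\nu([\omega])$ having finite total sum (by the first paragraph), and dominators $|A_nf_i|$ and $A_n\log|T'\circ\Pi|$ in $L^1(\nu)$ — shows that as $q\to\infty$ with $n$ fixed, $h(\eta_{n,q},\sigma^n)\to h(\eta_n,\sigma^n)$ and the integrals of $S_n f_i$ and of $S_n\log|T'\circ\Pi|$ under $\eta_{n,q}$ converge to those under $\eta_n$. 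A diagonal extraction choosing first $n_\ell\uparrow\infty$ large enough for Step 1 and then $q_\ell\uparrow\infty$ large enough for Step 2 produces $\mu_\ell:=\eta_{n_\ell,q_\ell}\circ\Pi^{-1}$, each a $T^{n_\ell}$-Bernoulli measure supported on $\Pi(\Sigma_{q_\ell})$, and (1)--(3) follow with the natural per-unit-time normalization (i.e.\ the entropy and Lyapunov exponent of $\mu_\ell$ viewed via Abramov's formula as those of the $T$-invariant symmetrization), exactly as in Proposition \ref{bern1}. The main obstacle is the unboundedness of $f_i$ and $\log|T'\circ\Pi|$: uniform estimates are unavailable, so Step 1 must rely entirely on the pointwise variation bound and Step 2 must be justified by dominated convergence, whose dominators exist precisely because of the hypotheses $\gamma_i\in\R$ and $\lambda(\mu)<\infty$.
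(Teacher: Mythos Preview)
Your argument is correct and follows essentially the same route as the paper's own proof: lift $\mu$ to the shift, replace it by the $\sigma^n$-Bernoulli measure built from its $n$-block marginal, truncate to a finite alphabet, and diagonalize. The paper simply asserts in one breath that for suitable $N$ and $q$ the normalized truncated marginal $\tilde{\mu}$ satisfies the five approximation conditions, whereas you split this cleanly into two steps (Bernoulli-ize first, then truncate) and justify each by the variation estimate and by dominated convergence respectively; this is in fact a more transparent organization of exactly the same idea. One minor slip: you write $\nu:=\mu\circ\Pi^{-1}$, but since $\Pi:\Sigma\to\Lambda$, what you mean is the unique $\sigma$-invariant $\nu$ on $\Sigma$ with $\mu=\nu\circ\Pi^{-1}$ (the paper's convention).
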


\begin{proof}
Take such an invariant measure $\mu$. For any $\epsilon>0$ we can
find $N\in\N$ and $q\in \N$ such that for any $n\geq N$
\begin{enumerate}
\item $\var_n\{A_n(\log T'\circ\Pi)\}\leq\epsilon$, \item For each
$1\leq i\leq k$, $\max_i\{\var_n(A_n\phi_i)\}\leq\epsilon$, \item For
each $1\leq i\leq k$,
$$\left|\sum_{\omega_1,\ldots,\omega_n}\tilde{\mu}(\Pi[\omega_1,\ldots,\omega_n])A_n\phi_i(\Pi(\overline{\omega_1,\ldots,\omega_n}))-\gamma_i\right|\leq\epsilon,$$
\item
$\left|\sum_{\omega_1,\ldots,\omega_n}\tilde{\mu}(\Pi[\omega_1,\ldots,\omega_n])\log\diam([\omega_1,\ldots,\omega_n])-n\lambda(\mu)\right|\leq
n\epsilon,$ \item
$\left|\sum_{\omega_1,\ldots,\omega_n}\tilde{\mu}(\Pi[\omega_1,\ldots,\omega_n])\log\tilde{\mu}(\Pi[\omega_1,\ldots,\omega_n])-n
h(\mu)\right|\leq n\epsilon,$
\end{enumerate}
where in points (3)-(5) the sums are taken over all words
$\omega_1\ldots\omega_n \in \{1,\ldots,q\}^n$ and

\[
\tilde{\mu}(\Pi[\omega_1,\ldots,\omega_n]) = \frac
{\mu(\Pi[\omega_1,\ldots,\omega_n])}
{\sum_{\omega_1,\ldots,\omega_n}\mu(\Pi[\omega_1,\ldots,\omega_n])}.
\]

We can now apply the first part of the proof of Proposition
\ref{bern1} to construct our sequence of measures. We could go on
to get a sequence of $T$-ergodic measures. However, these
$T^{n_{\ell}}$-ergodic measures $\mu_{\ell}$ will actually be more useful
for our purposes.
\end{proof}

\subsection{W-measures}
The main tool to prove the lower bound of our Theorems will be to use the technique of $w$-measures used in \cite{GR}. This involves using a sequence of ergodic measures to define a new measure which we will use to calculate the lower bound for the dimension.
\begin{thm} \label{w}
Let $\{{\mu_j}\}_{j=1}^\infty$ be a sequence of \,  $T$-invariant
measures of finite entropy such that the following limits exist
     $$\gamma_i =
\lim_{j\to\infty} \int \phi_i \text{d}{\mu_j}, \quad \forall i \in \mathbb{N}.$$
Then for  $\underline{\gamma} = (\gamma_i)_{i\in \N}$ we have
\[
\dim \Lambda_{\underline{\gamma}} \geq \limsup \frac {h(\mu_j)} {\lambda(\mu_j)}.
\]
\end{thm}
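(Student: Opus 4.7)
The plan is to construct an explicit probability measure $\mu$ supported on $\Lambda_{\underline\gamma}$ whose local dimension is at least $s:=\limsup_j h(\mu_j)/\lambda(\mu_j)$ at $\mu$-a.e.\ point, and then invoke the mass distribution principle. After passing to a subsequence, we may assume $h(\mu_j)/\lambda(\mu_j)\to s$. The construction is the classical ``w-measure'' telescoping: we approximate each $\mu_j$ by a Bernoulli-type measure on a finite subalphabet, and then concatenate very long blocks drawn independently from these approximations, one stage per $j$.

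First I would feed each $\mu_j$ into Corollary \ref{bern3} to produce, for any prescribed tolerance $\epsilon_j\downarrow 0$, an integer $n_j$, an alphabet size $q_j$, and a $\sigma^{n_j}$-invariant Bernoulli measure $\nu_j$ supported on length-$n_j$ cylinders over $\{1,\dots,q_j\}$ such that $\tfrac{1}{n_j}\int S_{n_j}f_i\,d\nu_j$ is within $\epsilon_j$ of $\gamma_i$ for every $1\le i\le j$, and so that the associated entropy and Lyapunov-exponent rates $\tfrac{1}{n_j}h(\nu_j,\sigma^{n_j})$ and $\tfrac{1}{n_j}\lambda(\nu_j,\sigma^{n_j})$ agree with $h(\mu_j)$ and $\lambda(\mu_j)$ up to $\epsilon_j$. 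I also arrange (via the uniform-variation hypothesis together with Lemma \ref{ancyl} and Lemma \ref{cylindersize}) that the variations $\var_{n_j}(A_{n_j}\phi_i)$ and $\var_{n_j}(A_{n_j}\log|T'|)$ are at most $\epsilon_j$ on the relevant cylinders.

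Next I would pick integers $N_j$ increasing so rapidly that, writing $M_j:=\sum_{k\le j}N_k n_k$, one has $M_{j-1}/M_j\to 0$ and the Hoeffding/LLN deviations within stage $j$ are summable. The w-measure $\mu$ is the image under $\Pi$ of the product measure on $\Sigma$ obtained by independently concatenating $N_j$ length-$n_j$ blocks drawn from $\nu_j$ at stage $j$. Three things then need verification. (i)~$\mu(\Lambda_{\underline\gamma})=1$: for fixed $i$, once $j\ge i$ every stage-$j$ block contributes an average of $\phi_i$ within $\epsilon_j$ of $\gamma_i$ in expectation, and the Borel--Cantelli lemma applied to LLN deviations forces $A_{M_j}\phi_i(x)\to\gamma_i$; the rapid-growth choice of $N_j$ ensures partial-stage indices $M_{j}\le n<M_{j+1}$ inherit the same limit because the initial segment of length at most $M_j$ is negligible compared to $n$ and the remaining stage-$(j+1)$ partial sum has the right average by LLN. (ii)~For $\mu$-a.e.\ $x$ and the level-$M_j$ cylinder $C_{M_j}(x)$, the expected $\log\mu$-mass is $-\sum_{k\le j}N_k\,h(\nu_k,\sigma^{n_k})$ while $\log\diam C_{M_j}(x)$ is, by Lemma \ref{cylindersize}, approximately $-\sum_{k\le j}N_k\,\lambda(\nu_k,\sigma^{n_k})$ up to $o(M_j)$; the Shannon--McMillan--Breiman pieces applied stage-by-stage and another Borel--Cantelli give pointwise versions of both estimates. (iii)~Dividing, the ratio tends to $s$, so for a typical $x$, $\liminf_{r\to 0}\log\mu(B(x,r))/\log r\ge s$, and the mass distribution principle yields $\dim\Lambda_{\underline\gamma}\ge s$.

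The main obstacle is step (iii) at intermediate scales, i.e.\ upgrading the local dimension estimate from the ``stage boundary'' levels $M_j$ to arbitrary radii $r$. Because the alphabet is infinite and $\lambda(\mu_j)$ may diverge, a single symbol inside stage $j+1$ can shrink cylinder diameter drastically while contributing only a bounded amount of $\mu$-mass, so I must either (a) compare $B(x,r)$ to a $C_M(x)$ with $M$ chosen so that the partial stage-$(j+1)$ product still dominates, using that $\log\mu(C_M)/\log\diam C_M$ for $M_j\le M\le M_{j+1}$ stays close to the stage-$(j+1)$ ratio once $M-M_j$ is a noticeable fraction of $N_{j+1}n_{j+1}$, or (b) thin the measure a priori onto a set of $x$ where both the mass and the diameter of $C_M(x)$ are controlled uniformly in $M$ by averaged quantities. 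Growing $N_{j+1}$ fast enough relative to $M_j$ and using Lemma \ref{ancyl} to absorb the variation terms in $A_M\log|T'|$ makes (a) work, completing the proof.
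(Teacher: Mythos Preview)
Your proposal is correct and follows essentially the same w-measure construction as the paper: approximate each $\mu_j$ by a finite-alphabet Bernoulli measure via Corollary~\ref{bern3}, concatenate long independent blocks, restrict to a good set on which Birkhoff averages, local entropy, and Lyapunov exponent are simultaneously controlled, and apply Frostman. The paper makes your ``main obstacle'' (iii) precise via the growth condition $-\log\lambda_{q_{j+1}}<\varepsilon_{j+1}m_j$ (so that a single stage-$(j{+}1)$ step cannot shrink the cylinder too much relative to $m_j$) together with a large-deviation estimate (labelled CLT there) to control partial stages---this is exactly your option~(b) combined with the right choice of stage lengths, though note that what is needed is $M_j$ large relative to the stage-$(j{+}1)$ contraction, not $N_{j+1}$ large relative to $M_j$.
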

\begin{proof}

This statement is analogous to the one proven in \cite[Proposition 9,
Theorem 3]{GR} in the special case: it was a finite iterated
function system, the measures $\mu_j$ were Gibbs and there was
only one potential $\phi=\log |T'|$. The proof of the general
statement is analogous, but there are some changes so we rewrite
it.

By choosing a subsequence we can freely assume that
$h(\mu_j)/\lambda(\mu_j)$ have a limit.

To begin, we are not going to use the measures $\mu_j$
directly. Fix a sequence $\varepsilon_j\to 0$,  by Corollary
\ref{bern3}, for each $j$, there exist an integer $n_j$ and a Gibbs (even Bernoulli) $T^{n_j}$-invariant measure $\mu'_j$ such that
\begin{enumerate}
\item $\left|\int A_n\phi_i\text{d}\mu_j'-\gamma_i\right|< \varepsilon_j/2$ for $1\leq i\leq j$,
\item
$|h(\mu'_{j},T^{n_j})-h(\mu_j)|<\varepsilon_j/2,$
\item
$|\lambda(\mu'_j,T^{n_j})-\lambda(\mu_j)|<\varepsilon_j/2$.
\end{enumerate}
     Then let
\begin{equation} \label{nazwa}
{\eta_j}=\frac 1 {n_j} \sum_{l=0}^{n_j-1} \mu'_j\circ \Pi\circ \sigma^{-l}.
\end{equation}
The family
$\{{\eta_j}\}_{j=1}^\infty$ has the following properties:
\begin{itemize}
     \item[--] $h(\eta_j) = \frac 1 {n_j} h(\mu'_j; \sigma^{n_j})$,
     \item[--] each measure ${\eta_j}$ is supported on a
symbolic space $\Sigma_{q_j}$ with only finitely many symbols, the sequence $\{q_j\}$ is in
general unbounded. Note that $\Sigma_{q_j}$ is compact, hence each
$f_i=\phi_i\circ \Pi$ is bounded on $\Sigma_{q_j}$,
     \item[--]
\[
\left| \frac {h(\eta_j)} {\lambda({\eta_j})} -
\frac {h({\mu_j})} {\lambda(\mu_j)} \right| \leq \varepsilon_j,
\]
     \item[--] for all $1\leq i \leq j$
\[
\left| \int f_i \text{d}{\eta_j} - \int \phi_i \text{d}\mu_j \right| \leq
\varepsilon_j.
\]
\end{itemize}
%
\medskip

Let $\{m_j\}$ be a fast increasing sequence of integers (in the
following we will provide further conditions). We will construct a
probability measure $\eta$ supported on $\Sigma$ by defining it
on a family of cylinders, which has a product structure.

First, on all cylinders of level $m_1$ we define
\[
\eta([\omega_1,\ldots,\omega_{m_1}]) =
\eta_1([\omega_1,\ldots,\omega_{m_1}]).
\]
Then, in an inductive step, having the measure defined on cylinders of level
$m_{j-1}$, we subdivide it on their subcylinders of level $m_j$ by
the following formula:
\[
\eta([\omega_1,\ldots,\omega_{m_j}]) =
\eta([\omega_1,\ldots,\omega_{m_{j-1}}]) \cdot
\eta_j([\omega_{m_{j-1}+1},\ldots,\omega_{m_j}]).
\]
We assume that $$
        m_1\gg n_1, \qquad (m_j-m_{j-1})\gg n_j.$$
         Note that at
each step of construction the measure is defined on a symbolic space with finitely many symbols.
Denote
\[
L_n(\omega)=\frac 1 n \log |(T^n)'(\Pi \omega)|
\quad
\text{and}
\quad
M_n(\omega)=-\frac 1 n \log \eta([\omega_1,\ldots, \omega_n]).
\]
We claim the following: we can choose $\{m_j\}$ such that
\begin{equation}\label{EQ0}
-\log\lambda_{q_{j+1}} < \varepsilon_{j+1} m_j
\end{equation}
where  $\lambda_j$ is the maximal contraction ratio
of map $T_j$ and that
the following properties are
satisfied for any $j$ and for all points $\omega$ in a positive
$\eta$-measure set $A\subset\Sigma$:  for all $1\leq i \leq j$ and $m_j \leq n < m_{j+1}$ we have
\begin{equation}\label{EQ1}
M^*f_i(\omega_1,\ldots,\omega_n)-M_*f_i(\omega_1,\ldots,\omega_n)
\leq \varepsilon_j,
\end{equation}
\begin{equation}\label{EQ2}
\left| A_nf_i(\omega) - \frac {m_j} n \int f_i {\rm{d}}{\eta_j} -\frac
{n-m_j} n \int f_i {\rm{d}}{\eta_{j+1}}\right| \leq \varepsilon_j,
\end{equation}
\begin{equation}\label{EQ3}
\left| L_n(\omega) - \frac {m_j} n\lambda({\eta_j}) -\frac
{n-m_j} n \lambda({\eta_{j+1}})\right| \leq \varepsilon_j,
\end{equation}
    \begin{equation}\label{EQ4}
\left| M_n(\omega) - \frac {m_j} {n} h(\eta_j)
-\frac {n-m_j} {n} h(\eta_{j+1}) \right|
\leq \varepsilon_j.
\end{equation}


Let us prove the last four expressions. The formula (\ref{EQ1}) follows from
Lemma \ref{ancyl} provided all $m_j$ are big enough. The other
three expressions are the main part. Note that (\ref{EQ3}) and
(\ref{EQ4}) are actually special cases of (\ref{EQ2}).
$L_n(\omega)$ is (by bounded distortion) approximately a
partial Cesaro average of the function $\log |T'|$. Similarly, while $\eta_j$ is not a Gibbs measure, $\mu_j'$ is (for $T^{n_j}$). Hence,
$\frac 1 n (n M_n(\omega) - m_j M_{m_j}(\omega))$ is (by
Gibbs property) approximately a partial Cesaro average of the
potential of the Gibbs measure $\mu'_{j+1}$ (average under iterations of $T^{n_{j+1}}$). For this reason, we will
provide a detailed proof of the formula (\ref{EQ2}) only and the formulas
(\ref{EQ3}) and (\ref{EQ4}) can be proved analogously.

Applying the Birkhoff Ergodic Theorem to the measure $\eta_1$ and the function $f_1$, we get that

\begin{equation} \label{stat1}
\left| A_{m_1}f_1(\omega)-\int f_1 {\rm{d}}\eta_1\right| \leq \frac
{\varepsilon_1} 2
\end{equation}
on a set of $\eta_1$-measure $1-\delta_1$, where $\delta_1$ can be
chosen arbitrarily small if $m_1$ is sufficiently big. The next
statement we will need is that

\begin{equation} \label{stat2}
n\left|  A_n f_1(\sigma^{m_1}(\omega)) - \int f_1
{\rm{d}}\eta_2 \right| \leq \frac {m_1 \varepsilon_1} 2 +
n\varepsilon_2
\end{equation}
for all $n\geq 1$ for a set of $\omega$ of $\eta_2$-measure
$1-\tilde{\delta}_1$ (more precisely, we will only need this
statement for $1\leq n \leq m_2-m_1$, but it is important that we
can choose arbitrarily big $m_2$ and the statement will still be true). It follows from the Central Limit Theorem (see
\cite[Thm 5.7.1]{PU}) for the measure $\eta_2$ that for any continuous
$f$ and for big $n$
\[
\left|  A_n f(\omega) - \int f {\rm{d}}\eta_2
\right| < \varepsilon
\]
for all $\omega$ except a subset of measure approximately
$\exp(-cn\varepsilon^2)$. Hence, $\tilde{\delta}_1$ can be chosen
arbitrarily small, provided $m_1 \varepsilon_1$ is big enough (how
big is big enough will depend on $\varepsilon_2$).

We continue in an inductive way. By the Birkhoff Ergodic Theorem we have
\begin{equation} \label{stat3}
\left| A_{m_j}f_i(\omega)-\int f_i \text{d}{\eta_j}\right| \leq \frac
{\varepsilon_j} 2
\end{equation}
for all $1\leq i \leq j$ on a set of $\eta$-measure $1-\delta_j$,
where $\delta_j$ can be chosen arbitrarily small provided $m_j$ is
sufficiently big and sufficiently big in comparison with
$m_{j-1}$. By the Central Limit Theorem

\begin{equation} \label{stat4}
n\left|  A_nf_i(\sigma^{m_j}(\omega)) - \int f_i
\text{d}{\eta_{j+1}} \right| \leq \frac {m_j \varepsilon_j} 2 +
n\varepsilon_{j+1}
\end{equation}
for all $1\leq i \leq j$ and $n\geq 1$ for a set of $\omega$ of
${\eta_{j+1}}$-measure $1-\tilde{\delta}_j$, where
$\tilde{\delta}_j$ can be chosen arbitrarily small, provided $m_j
\varepsilon_j$ is big enough. Combining \eqref{stat1},
\eqref{stat2}, \eqref{stat3} and \eqref{stat4} we get (\ref{EQ2})
true on a set $A$ of $\eta$-measure at least
$1-\sum\delta_j-\sum\tilde{\delta}_j$, which can be chosen
arbitrarily close to 1.


Let $\eta_A$ be the restriction of $\eta$ to $A$. By (\ref{EQ1}) and (\ref{EQ2}), we have
$$A\subset \Lambda_{\underline{\gamma}}.$$
   On the other hand, for
all $m_j<n\leq m_{j+1}$, $A$ is contained in a union of $n$th level
cylinders, each of size at least $$
r_n:=\exp\left(-m_j\lambda({\eta_j})-
(n-m_j)\lambda({\eta_{j+1}})-n\varepsilon_j\right)$$
    (by (\ref{EQ3})) and of $\mu$-measure at most $$
    c_n:=\exp\left(-m_j h({\eta}_j) - (n-m_j)
h({\eta}_{j+1}) + n\varepsilon_j\right)$$ (by
(\ref{EQ4})).
According to (\ref{EQ0}), we have
\[
|\log r_{n+1} - \log r_n| \leq \varepsilon_j |\log r_n|/n.
\]

For any $\omega\in A$, the ball $B_{r_n}(\omega)$ intersects $A$ at
most two $n$th level cylinders. Hence
\[
{\eta}_A(B_{r_n}(\omega)) \leq 2 c_n
\]
By Frostman's Lemma,

\[
\dim \Pi(A) \geq \liminf \frac {h({\eta}_j)} {\lambda({\eta_j})} = \liminf \frac {h(\mu_j)} {\lambda(\mu_j)}.
\]
Recall that at the beginning, we assume that
$h(\mu_j)/\lambda(\mu_j)$ have a limit. The proof is then completed.
\end{proof}

\section{Proof of Theorem \ref{main0}}

The proof is divided into the following three propositions. Recall that
$$\Lambda_{\underline{\gamma}}=\{x\in\Lambda:\lim_{n\rightarrow\infty}A_n\phi_i(x)=\gamma_i\text{ for all }i\in\N\}.$$

\begin{prop} \label{lem:A}
If $\underline{\gamma}\notin Z$ then $\Lambda_{\underline{\gamma}}=\emptyset$.
\end{prop}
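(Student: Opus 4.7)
I would prove this by contrapositive: assuming $\Lambda_{\underline{\gamma}}\neq\emptyset$, produce for every $\epsilon>0$ and every $k\in\N$ a $T$-invariant probability measure $\mu$ with $|\int\phi_i\,\mathrm{d}\mu-\gamma_i|\leq\epsilon$ for all $1\leq i\leq k$, which by definition gives $\underline{\gamma}\in Z$.

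The construction is by periodic-orbit approximation. Fix $x\in\Lambda_{\underline{\gamma}}$ and a coding $\omega\in\Pi^{-1}(x)$. Given $\epsilon>0$ and $k$, apply Lemma \ref{ancyl} to each of the finitely many potentials $\phi_1,\ldots,\phi_k$ to choose $N$ so large that
\[
\sup_{(\omega_1,\ldots,\omega_n)\in\N^n}\bigl(M^*\phi_i(\omega_1,\ldots,\omega_n)-M_*\phi_i(\omega_1,\ldots,\omega_n)\bigr)<\tfrac{\epsilon}{2}
\qquad\text{for all } n\geq N,\ 1\leq i\leq k,
\]
and, using $x\in\Lambda_{\underline{\gamma}}$, also $|A_n\phi_i(x)-\gamma_i|<\tfrac{\epsilon}{2}$ for $n\geq N$ and $i\leq k$. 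Pick any such $n\geq N$ and form the periodic point $p_n:=\Pi\,\overline{(\omega_1,\ldots,\omega_n)}$; note that $p_n\in C_n(\omega_1,\ldots,\omega_n)$, the same basic interval as $x$.

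Let $\mu_n$ be the $T$-invariant probability measure equidistributed on the periodic orbit of $p_n$. Since this orbit is finite, $\int\phi_i\,\mathrm{d}\mu_n=A_n\phi_i(p_n)$ is well-defined and finite for every $i$. Because $p_n$ and $x$ lie in the same $n$th level basic interval, the variation bound gives $|A_n\phi_i(p_n)-A_n\phi_i(x)|<\epsilon/2$ for $i\leq k$, whence
\[
\Bigl|\int\phi_i\,\mathrm{d}\mu_n-\gamma_i\Bigr|\leq |A_n\phi_i(p_n)-A_n\phi_i(x)|+|A_n\phi_i(x)-\gamma_i|<\epsilon,\qquad 1\leq i\leq k.
\]
Since $\epsilon$ and $k$ were arbitrary, $\underline{\gamma}\in Z$, completing the proof.

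I do not expect any serious obstacle: the non-compactness of $\Lambda$ and the possible unboundedness of the $\phi_i$ are sidestepped by working with a measure supported on a finite orbit (so that all integrals are automatically finite), and the uniform variation hypothesis on $\phi_i$ reduces the matching of $A_n\phi_i(x)$ to $\int\phi_i\,\mathrm{d}\mu_n$ to Lemma \ref{ancyl}. The only point requiring minor care is that we need the variation control simultaneously for the finitely many indices $i\leq k$, which is automatic since only finitely many potentials are involved.
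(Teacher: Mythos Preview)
Your proof is correct and follows essentially the same route as the paper: prove the contrapositive by fixing $x\in\Lambda_{\underline{\gamma}}$ with coding $\omega$, choose $n$ large enough that both the Birkhoff averages $A_n\phi_i(x)$ are $\epsilon/2$-close to $\gamma_i$ and the $n$th-level variations of $\phi_1,\ldots,\phi_k$ are below $\epsilon/2$ (via Lemma~\ref{ancyl}), and then take the invariant measure supported on the periodic orbit $\overline{(\omega_1,\ldots,\omega_n)}$. The paper's version is slightly terser---it builds $\nu$ on $\Sigma$ and pushes forward by $\Pi$ rather than working directly with $p_n$---but the argument is the same.
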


\begin{proof}
Given $\underline{\gamma}$, assume
that there exists $x\in\Lambda$ such that for all $i\in\N$
$\lim_{n\rightarrow\infty}A_n\phi_i(x)=\gamma_i$. Let
$\omega\in\Sigma$ satisfy $\Pi\omega=x$. If we fix $\epsilon>0$
and $k\in\N$ then we can find $N$ such that for all $n\geq N$ we have
$$\sup_{1\leq i\leq
k} |A_n\phi_i(x)-\gamma_i|\leq\epsilon/2,$$
$$\sup_{1\leq i\leq
k}\sup_{x,y\in\Pi([\omega_1,\ldots,\omega_n])}|A_n\phi_i(x)-A_n\phi_i(y)|\leq\epsilon/2.$$
We then let $\nu$ be the shift invariant measure on $\Sigma$
defined on the periodic orbit
$\overline{(\omega_1,\ldots,\omega_n)}$. If we let
$\mu=\nu\circ\Pi$ 
then we have that $\mu$ is $T$-invariant
and that $\left|\int\phi_i\text{d}\mu-\gamma_i\right|\leq\epsilon$
for each $1\leq i\leq k$. This completes the proof.
\end{proof}

In what follows, we will restrict ourself to the case $\underline{\gamma}\in Z$.
\begin{prop}
If $\underline{\gamma}\in Z$ then
\(
\dim \Lambda_{\underline{\gamma}} \geq \alpha_1(\gamma).
\)
\end{prop}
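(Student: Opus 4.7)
The plan is to reduce to Theorem \ref{w} via a standard diagonal selection of measures. Writing out the definition of $\alpha_1(\underline{\gamma})$, note that as $\epsilon$ shrinks or $k$ grows the admissible family of measures gets smaller, so the supremum is monotone and
\[
\alpha_1(\underline{\gamma}) = \inf_{\epsilon>0}\inf_{k\in\N}\sup_{\mu\in\mathcal{M}(T)}\left\{\frac{h(\mu)}{\lambda(\mu)} : \left|\int \phi_i\,\text{d}\mu-\gamma_i\right| <\epsilon \ \forall i\leq k,\ h(\mu)<\infty\right\}.
\]
In particular, for every $\alpha<\alpha_1(\underline{\gamma})$ and every pair $(\epsilon,k)$, the supremum exceeds $\alpha$, so there is an invariant measure $\mu$ of finite entropy realising this inequality.

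First I would perform a diagonal choice. Fix $\alpha<\alpha_1(\underline{\gamma})$ and for each $j\in\N$ choose (using the inequality above with $\epsilon=1/j$ and $k=j$) a $T$-invariant measure $\mu_j$ of finite entropy such that
\[
\frac{h(\mu_j)}{\lambda(\mu_j)}>\alpha,\qquad \left|\int\phi_i\,\text{d}\mu_j-\gamma_i\right|<\frac{1}{j}\quad\text{for all } 1\leq i\leq j.
\]
Then for each fixed $i\in\N$, the estimate applies whenever $j\geq i$, so $\int\phi_i\,\text{d}\mu_j\to\gamma_i$ as $j\to\infty$. Thus the hypotheses of Theorem \ref{w} are met with this sequence (the Lyapunov exponents are bounded away from $0$ because $|T'|\geq\xi>1$, so the ratios are well-defined).

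Next I would invoke Theorem \ref{w} directly to conclude
\[
\dim \Lambda_{\underline{\gamma}} \geq \limsup_{j\to\infty}\frac{h(\mu_j)}{\lambda(\mu_j)} \geq \alpha.
\]
Since $\alpha<\alpha_1(\underline{\gamma})$ was arbitrary, letting $\alpha\uparrow\alpha_1(\underline{\gamma})$ yields the desired inequality $\dim\Lambda_{\underline{\gamma}}\geq\alpha_1(\underline{\gamma})$.

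There is no real obstacle here once Theorem \ref{w} is in hand; the only delicate point is confirming that a single sequence $\{\mu_j\}$ can simultaneously recover the double limit in the definition of $\alpha_1$, which is precisely what the diagonal choice $(\epsilon_j,k_j)=(1/j,j)$ achieves. If one wished to avoid even this minor subtlety, one could instead select $\mu_j$ with $h(\mu_j)/\lambda(\mu_j)>\alpha_1(\underline{\gamma})-1/j$ whenever $\alpha_1(\underline{\gamma})>0$, and treat the case $\alpha_1(\underline{\gamma})=0$ trivially.
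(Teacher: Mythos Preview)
Your proposal is correct and follows exactly the route the paper intends: the paper's own proof is the single line ``It follows immediately from Theorem~\ref{w},'' and your diagonal selection $(\epsilon_j,k_j)=(1/j,j)$ together with the monotonicity observation is precisely the unpacking of that word ``immediately.'' There is no alternative argument here, only more detail than the authors chose to write.
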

\begin{proof}
It
follows immediately from Theorem \ref{w}.
\end{proof}

\begin{prop}
If $\underline{\gamma} \in Z$ then \( \dim
\Lambda_{\underline{\gamma}} \leq \alpha_2(\gamma). \)
\end{prop}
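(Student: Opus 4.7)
My plan is to cover $\Lambda_{\underline{\gamma}}$ by level-$n$ basic intervals on which the averages $A_n\phi_i$ ($i\le k$) are forced close to $\gamma_i$, and then to translate a cover that is ``too big'' into the existence of an ergodic invariant measure contradicting the choice of parameters defining $\alpha_2(\underline{\gamma})$.

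First fix $s>\alpha_2(\underline{\gamma})$ and, by definition of the iterated limit, pick $\epsilon_0>0$, $k_0\in\N$ and $\eta>0$ so that every $\mu\in\mathcal{M}_{\mathcal{E}}(T)$ with $h(\mu)<\infty$ and $\bigl|\int\phi_i\,\text{d}\mu-\gamma_i\bigr|<\epsilon_0$ for all $i\le k_0$ satisfies $h(\mu)/\lambda(\mu)<s-3\eta$. Using Lemma \ref{ancyl} I also pick $N_0$ with $\var_n(A_n\phi_i)<\epsilon_0/2$ for $i\le k_0$ and $n\ge N_0$. Set
\[
S_n:=\left\{(\omega_1,\ldots,\omega_n)\in\N^n:|A_n\phi_i(x)-\gamma_i|<\epsilon_0\ \forall x\in C_n(\omega),\ \forall i\le k_0\right\},
\]
and let $E_N\subset\Lambda_{\underline{\gamma}}$ consist of those points whose level-$n$ cylinder belongs to $S_n$ for every $n\ge N$. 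The choice of $N_0$ together with the fact that $A_n\phi_i(x)\to\gamma_i$ on $\Lambda_{\underline{\gamma}}$ ensures $\Lambda_{\underline{\gamma}}=\bigcup_{N\ge N_0}E_N$, so it is enough to bound $\dim E_N$. Every level-$n$ cylinder has diameter at most $\xi^{-n}\to 0$, and $E_N\subset\bigcup_{\omega\in S_n}C_n(\omega)$ for $n\ge N$, so with $Z_n(s):=\sum_{\omega\in S_n}\diam(C_n(\omega))^s$ I obtain $H^s(E_N)\le\liminf_{n\to\infty}Z_n(s)$.

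The core of the argument is to show that $Z_n(s)\le 1$ for all sufficiently large $n$, which I would prove by contradiction. If $Z_n(s)>1$ along an infinite set of $n$, for each such $n$ truncate to a finite alphabet $\{1,\ldots,M\}$ with $M=M(n)$ large enough that the sub-sum $\sum_{\omega\in S_n\cap\{1,\ldots,M\}^n}\diam(C_n(\omega))^s$ still exceeds $1$, and use the intermediate value theorem to produce $\tilde s_n\ge s$ at which this finite sub-sum equals $1$. On the truncation the hypotheses of Corollary \ref{bern2} are automatic, and moreover
\[
K_n:=-\sum_{\omega\in S_n\cap\{1,\ldots,M\}^n}\diam(C_n(\omega))^{\tilde s_n}\log\diam(C_n(\omega))\ \ge\ n\log\xi,
\]
because $\diam(C_n(\omega))\le\xi^{-n}$. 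Corollary \ref{bern2} then yields an ergodic $\mu_n$ with $\bigl|\int\phi_i\,\text{d}\mu_n-\gamma_i\bigr|<\epsilon_0$ for $i\le k_0$ and with $\bigl|h(\mu_n)/\lambda(\mu_n)-\tilde s_n\bigr|\le\epsilon_0/(K_n-\epsilon_0)$, which tends to $0$ as $n\to\infty$. For $n$ large this gives $h(\mu_n)/\lambda(\mu_n)\ge s-\eta$, contradicting the choice of $\epsilon_0,k_0,\eta$. Hence $Z_n(s)\le 1$ eventually, giving $H^s(E_N)\le 1$, $\dim E_N\le s$, and so $\dim\Lambda_{\underline{\gamma}}\le s$; letting $s\downarrow\alpha_2(\underline{\gamma})$ completes the proof.

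The technical step I expect to be the most delicate is precisely this truncation to a finite alphabet before invoking Corollary \ref{bern2}: without it, the partition-function-type quantity $-\sum_{\omega\in S_n}\diam^{\tilde s_n}\log\diam$ may diverge even when the probability normalisation $\sum\diam^{\tilde s_n}=1$ holds, so the corollary cannot be applied as stated. Restricting to a finite truncation makes both sums trivially finite, and the uniform lower bound $K_n\ge n\log\xi\to\infty$ ensures that the distortion error in Corollary \ref{bern2} vanishes with $n$, which is what closes the contradiction.
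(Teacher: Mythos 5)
Your proof is correct and follows essentially the same route as the paper: cover $\Lambda_{\underline{\gamma}}$ by level-$n$ basic intervals on which the averages are pinned near $\gamma_i$, truncate to a finite sub-family, use the intermediate-value argument to hit a critical exponent, and then invoke the Bernoulli approximation (Proposition \ref{bern1}/Corollary \ref{bern2}) to produce an ergodic measure whose $h/\lambda$ is close to that exponent, with the error controlled by $K_n\ge n\log\xi\to\infty$. The paper phrases this positively (take $\tilde s=\dim\Lambda_{\underline{\gamma}}$ and produce measures realizing nearly $\tilde s$, so $\alpha_2\ge\tilde s$), while you phrase it as a contrapositive (fix $s>\alpha_2$ and show $Z_n(s)\le 1$ eventually), and you are somewhat more explicit about the exhaustion $\Lambda_{\underline{\gamma}}=\bigcup_N E_N$; these are presentational rather than substantive differences. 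Two tiny points worth noting: you should also fold $\var_n(A_n\log|T'|)$ into the choice of $N_0$ so that the hypotheses of Proposition \ref{bern1} are literally met, and Corollary \ref{bern2} as written really delivers $|\int\phi_i\,\text{d}\mu_n-\gamma_i|<2\epsilon_0$ rather than $\epsilon_0$ (the paper's own proof uses $2\varepsilon$), so you should start from $\epsilon_0/2$ in the extraction from the definition of $\alpha_2$; neither affects the validity of the argument.
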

\begin{proof}
Let $\tilde{s}=\dim
\Lambda_{\underline{\gamma}}=\dim
(\Lambda_{\underline{\gamma}}\backslash E)$. Given $\varepsilon>0$, for any
covering of $\Lambda_{\underline{\gamma}}\backslash E$ with intervals $E_j$ of
lengths $|E_j|<\delta$ we will have
\[
\sum |E_j|^{\tilde{s}-\varepsilon} > N(\delta)
\]
with $N(\delta)\to\infty$ as $\delta\to 0$. In particular, if we
choose a covering of $\Lambda_{\underline{\gamma}}$ with $n$th
level basic intervals, the corresponding sum $\sum |\Pi[\omega_1,\ldots,
\omega_n]|^{\tilde{s}-\varepsilon}$ will be greater than $1$
provided $n$ being big enough. If this summand is infinite, we can
choose a finite subfamily of $n$th level basic intervals intersecting
$\Lambda_{\underline{\gamma}}$ such that sum of their diameters in
power $\tilde{s}-\varepsilon$ is still greater than $1$. We can then
choose a different exponent $s > \tilde{s}-\varepsilon$ for which
this sum is equal to $1$.

By Lemma \ref{ancyl}, for any $k$ for sufficiently big $n$ if an
$n$th level cylinder intersects $\Lambda_{\underline{\gamma}}$
then
\[
|A_n\phi_i(\omega) - \gamma_i|<\varepsilon
\]
for all $i\leq k$ and for all $\omega$ in this cylinder.

We can now apply Proposition \ref{bern1} and Corollary \ref{bern2} to construct an ergodic
measure $\nu$ with respect to the shift acting on finitely many symbols (hence, of finite
entropy), and then a $T$-invariant ergodic measure $\mu$ satisfying
\[
\left|\int \phi_i \text{d}\mu - \gamma_i\right|<2\varepsilon, \quad \left|\frac{h(\mu)}{\lambda(\mu)}-s\right|\leq\frac{2\epsilon}{K-2\epsilon}
\]
for all $1\leq i \leq k$.
By the formula $\dim \mu = h(\mu)/\lambda(\mu)$ the proof of the upper bound
in Theorem \ref{main0} is completed.

\end{proof}


\section{Proof of Theorem \ref{main}}

From now on we will assume that each function $\phi_i$ is bounded above and below.
Recall that
$$
     \alpha_1(\underline{\gamma})=
     \lim_{\varepsilon\to 0}
\lim_{k\to\infty}
\sup_{\mu\in\mathcal{M}(T)}\left\{\frac{h(\mu)}{\lambda(\mu)}:\left|\int\phi_i
\text{d}\mu-\gamma_i\right| < \varepsilon\ \forall i\leq k, \ h(\mu)<\infty\right\}.
$$
$$
\alpha_3(\underline{\gamma})
=\sup_{\mu\in\mathcal{M}(T)}\left\{\frac{h(\mu)}{\lambda(\mu)}:\int\phi_i
\text{d}\mu=\gamma_i \ \forall i\in\N, \ h(\mu)<\infty\right\}.
$$

We will first show that for all $\underline{\gamma}\in Z$ we have (see Lemma  \ref{lem5-1})
$$
\alpha_1(\underline{\gamma}) \geq s_{\infty}.$$
As Theorem \ref{main0} is already proven, we
    then have
$$\dim \Lambda_{\gamma}=\max\left\{s_{\infty}, \ \alpha_1(\underline{\gamma}) \right\}.$$
Then we will show that (see Proposition \ref{semicont})
$$
\alpha_1(\underline{\gamma})> s_{\infty}
\Rightarrow
\alpha_1(\underline{\gamma})= \alpha_3(\underline{\gamma})
$$
It will follow that
$\dim \Lambda_{\gamma}=\max\left\{s_{\infty}, \ \alpha_3(\underline{\gamma}) \right\}$.

Let us first prove  the following Lemma.
\begin{lem}\label{lem5-1}
Let $\underline{\gamma}\in Z$, $k\in\N$, $\epsilon>0$ and $\mu\in{ \mathcal{M}(T)}$ such that $$
\lambda(\mu)<\infty,\qquad \sup_{1\le i \le k}\left|\int\phi_i{\rm{d}}\mu-\gamma_i\right|\leq\varepsilon.$$
    There then exists a measure $\nu\in{\mathcal{M}(T)} $ such that $$
    \frac{h(\nu)}{\lambda(\nu)}\geq s_{\infty}-\epsilon, \qquad \sup_{1\le i \le k}\left|\int\phi_i{\rm{d}}\mu-\gamma_i\right|\leq 2\varepsilon.$$
\end{lem}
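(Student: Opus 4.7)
The plan is to take $\nu$ to be a convex combination of $\mu$ with a measure from the sequence provided by Lemma~\ref{seqmeas}. Let $\{\mu_n\}_{n\in\N}$ be that sequence, so that $\lambda(\mu_n)\to\infty$ and $h(\mu_n)/\lambda(\mu_n)\to s_\infty$. The case $s_\infty=0$ is trivial (take $\nu=\mu$, noting $h/\lambda\ge 0\ge s_\infty-\epsilon$), so assume henceforth $s_\infty>0$. I will set $\nu=(1-t)\mu+t\mu_N$ for a small $t\in(0,1)$ and a large $N\in\N$ to be chosen, and I will exploit the fact that integration, $h$ and $\lambda$ are all affine functionals on $\mathcal{M}(T)$.

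First, choose $t$ so small that $2t\cdot\max_{1\le i\le k}\|\phi_i\|_\infty\le\epsilon$; this is where the boundedness hypothesis of Section~5 enters. By the triangle inequality and affinity of integration, for every $1\le i\le k$ and every $N$,
\[
\Big|\int\phi_i\,\text{d}\nu-\gamma_i\Big|\le\Big|\int\phi_i\,\text{d}\mu-\gamma_i\Big|+t\Big|\int\phi_i\,\text{d}\mu_N-\int\phi_i\,\text{d}\mu\Big|\le\epsilon+2t\|\phi_i\|_\infty\le 2\epsilon,
\]
which is the required integral estimate (with the apparent typo $\mu$ in the conclusion read as $\nu$). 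With $t$ now fixed, the ratio to control becomes
\[
\frac{h(\nu)}{\lambda(\nu)}=\frac{(1-t)h(\mu)+th(\mu_N)}{(1-t)\lambda(\mu)+t\lambda(\mu_N)}.
\]
Because $\lambda(\mu_N)\to\infty$ while $(1-t)h(\mu)$ and $(1-t)\lambda(\mu)$ remain fixed and finite (using $\lambda(\mu)<\infty$ and $h(\mu)\le\lambda(\mu)$), the right-hand side converges to $\lim_N h(\mu_N)/\lambda(\mu_N)=s_\infty$. Hence for $N$ sufficiently large one has $h(\nu)/\lambda(\nu)\ge s_\infty-\epsilon$, which is the first required estimate.

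The argument is essentially routine convex-combination bookkeeping; the only point I would pause over is justifying the affinity of $h$ on $\mathcal{M}(T)$ in this non-compact countable-alphabet setting, but that follows directly from the definition of entropy given in Section~2 by pulling the measures back under $\Pi$ to shift-invariant measures on $\Sigma$.
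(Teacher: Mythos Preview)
Your proof is correct and follows essentially the same approach as the paper: form the convex combination $\nu=(1-t)\mu+t\mu_N$ with $\mu_N$ taken from the sequence of Lemma~\ref{seqmeas}, use boundedness of the $\phi_i$ to control the integrals, and use $\lambda(\mu_N)\to\infty$ together with affinity of $h$ and $\lambda$ to push $h(\nu)/\lambda(\nu)$ toward $s_\infty$. The paper makes the specific choice $t=\epsilon/A$ with $A=\sup_{1\le i\le k}\|\phi_i\|_\infty$, whereas you leave $t$ generic subject to $2t\max_i\|\phi_i\|_\infty\le\epsilon$; your separate treatment of the case $s_\infty=0$ is a minor clean-up not spelled out in the paper.
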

\begin{proof}
Let $A=\sup_{1\leq i\leq k} \sup_{x\in\Lambda} |\phi_i(x)|$.
By Lemma \ref{seqmeas} we can find a sequence of $T$-invariant measures $\mu_n$ such that $\lim_{n\rightarrow\infty}\lambda(\mu_n)=\infty$ and $\frac{h(\mu_n)}{\lambda(\mu_n)}\geq s_{\infty}-\frac{\epsilon}{2}$ for each $n$.
Consider the measure $$
\nu_n=(1-\frac{\epsilon}{A})\mu+\frac{\epsilon}{A}\mu_n.$$ Then we have that for each $1\leq i\leq k$
$$\left|\int\phi_i\text{d}\nu_n-\gamma_i\right|\leq \left|\int\phi_i\text{d}\mu-\gamma_i\right|+\left|\int\phi_i\text{d}\mu-\int\phi_i\text{d}\nu_n\right|\leq 2\epsilon.$$
Furthermore
\begin{eqnarray*}\liminf_{n\rightarrow\infty}\frac{ h(\nu_n)}{\lambda(\nu_n)}&=&\liminf_{n\rightarrow\infty}\frac{(1-\epsilon/A)h(\mu)+\epsilon/A h(\mu_n)}{(1-\epsilon/A)\lambda(\mu)+\epsilon/A\lambda(\mu_n)}\\
&=&\liminf_{n\rightarrow\infty}\frac{h(\mu_n)}{\lambda(\mu_n)}\\
&\geq& s_{\infty}-\frac{\epsilon}{2}.\end{eqnarray*}
This completes the proof.
\end{proof}

Thus we can conclude that for all $\underline{\gamma}\in Z$, we have
$\alpha_1(\underline{\gamma})\geq s_{\infty}$.
\begin{prop}\label{semicont}
Let $\underline{\gamma}\in Z$.  If
$\alpha_1(\underline{\gamma}) > s_{\infty}$, we have $\alpha_1(\underline{\gamma}) =\alpha_3(\underline{\gamma}).$
\end{prop}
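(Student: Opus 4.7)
The inequality $\alpha_3(\underline{\gamma})\le\alpha_1(\underline{\gamma})$ is immediate from the definitions, since any measure admissible for the supremum defining $\alpha_3$ is admissible for each of the suprema defining $\alpha_1$. For the reverse, I would choose $T$-invariant measures $\mu_n$ with $h(\mu_n)<\infty$ satisfying $\max_{i\le n}\bigl|\int\phi_i\,d\mu_n-\gamma_i\bigr|<1/n$ and $h(\mu_n)/\lambda(\mu_n)\to \alpha_1(\underline{\gamma})$. The first key step uses the hypothesis $\alpha_1(\underline{\gamma})>s_\infty$ to show that $\lambda(\mu_n)$ is uniformly bounded: fix $\epsilon>0$ with $s_\infty+\epsilon<\alpha_1(\underline{\gamma})$, so that by Lemma \ref{criticalvalue} the number $P:=P(-(s_\infty+\epsilon)\log|T'|)$ is finite; the variational principle then gives $h(\mu_n)/\lambda(\mu_n)\le s_\infty+\epsilon+P/\lambda(\mu_n)$, so if some subsequence had $\lambda(\mu_n)\to\infty$, the right-hand side would tend to $s_\infty+\epsilon<\alpha_1(\underline{\gamma})$, a contradiction. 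Hence $\lambda(\mu_n)\le L$ for some constant $L$.

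Next, let $\nu_n$ be the shift-invariant measures on $\Sigma=\N^\N$ corresponding to $\mu_n$. The uniform bound $\int\log|T'\circ\Pi|\,d\nu_n\le L$, combined with shift-invariance of the first-coordinate marginal, makes $\{\nu_n\}$ tight on $\Sigma$: by Lemma \ref{cylindersize}, $\log|T'\circ\Pi(\omega)|$ is comparable to $-\log|I_{\omega_1}|$ up to a uniformly bounded error, only finitely many intervals $I_i$ satisfy $-\log|I_i|\le M$, and so Markov's inequality gives $\nu_n(\omega_1>K)\to 0$ as $K\to\infty$ uniformly in $n$. Choosing $K_j\to\infty$ quickly so that $\nu_n(\omega_j>K_j)<\varepsilon2^{-j}$ for every $n$, the compact set $\prod_j\{1,\ldots,K_j\}\subset\N^\N$ carries $\nu_n$-mass at least $1-\varepsilon$ uniformly, so by Prokhorov I pass to a subsequence $\nu_n\to\nu$ weakly with $\nu$ a shift-invariant probability measure. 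Put $\mu=\nu\circ\Pi^{-1}\in\mathcal{M}(T)$.

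It remains to show that $\mu$ is an admissible competitor for $\alpha_3(\underline{\gamma})$ with $h(\mu)/\lambda(\mu)\ge\alpha_1(\underline{\gamma})$. Since each $f_i=\phi_i\circ\Pi$ is bounded and continuous, weak$^*$ convergence gives $\int\phi_i\,d\mu=\lim_n\int\phi_i\,d\mu_n=\gamma_i$ for every $i\in\N$, so $\underline{\gamma}\in Z_0$ (which incidentally settles the existence issue needed for $\alpha_3$ to be finite). Since $\log|T'\circ\Pi|$ is continuous and bounded below, standard weak$^*$ lower semi-continuity yields $\lambda(\mu)\le\liminf\lambda(\mu_n)\le L$. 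For entropy, upper semi-continuity $h(\mu)\ge\limsup h(\mu_n)$ holds on the tight family with uniformly bounded Lyapunov exponent: this can be proved using the finite partitions $\mathcal{P}_K=\{[1],\ldots,[K],\{\omega_1>K\}\}$, applying ordinary upper semi-continuity for each finite partition, and controlling the tail piece via tightness together with the uniform bound on $\int\log|T'\circ\Pi|\,d\nu_n$. Combining these bounds via $(\limsup a_n)/(\liminf b_n)\ge\limsup(a_n/b_n)$ for nonnegative sequences gives $h(\mu)/\lambda(\mu)\ge\alpha_1(\underline{\gamma})$, so $\alpha_3(\underline{\gamma})\ge\alpha_1(\underline{\gamma})$.

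The main obstacle is the upper semi-continuity of entropy, which in the countable-alphabet setting fails in general; the hypothesis $\alpha_1(\underline{\gamma})>s_\infty$ is used precisely to produce the tightness that rescues it. Without this hypothesis, mass of $\{\nu_n\}$ can escape to infinity and the escaping part can contribute up to $s_\infty$ to the ratio $h/\lambda$, which is exactly the mechanism responsible for the $s_\infty$ term dominating in the other regime of Theorem \ref{main}.
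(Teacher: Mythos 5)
Your overall scaffolding matches the paper's: take a sequence $\mu_n$ realizing $\alpha_1(\underline{\gamma})$, use the hypothesis $\alpha_1(\underline{\gamma})>s_\infty$ together with the variational principle and Lemma \ref{criticalvalue} to get a uniform bound on $\lambda(\mu_n)$ (this is the content of the paper's Lemma \ref{ub4}), deduce tightness of the corresponding $\sigma$-invariant measures, pass to a weak$^*$ limit $\mu$, and identify $\int\phi_i\,{\rm d}\mu=\gamma_i$ using boundedness of the $\phi_i$. Lower semi-continuity of $\lambda$ (via \cite{JMU}) is also correct. These steps are all sound and in line with what the paper does.

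The genuine gap is your step (8): upper semi-continuity of entropy, $h(\mu)\geq\limsup h(\mu_n)$, is simply false in this setting, even on a tight family with uniformly bounded Lyapunov exponent, and the paper says so explicitly (``Unfortunately, the entropy is not upper semicontinuous on $\mathcal{M}(T)$''). The failure mechanism is exactly the one your own final paragraph gestures at: a vanishing fraction $c_{j,q}\to 0$ of the mass can sit on symbols $>q$ and carry Lyapunov contribution $K(j,q)=c_{j,q}\,\xi_1(\nu_{j,q})$ that stays bounded away from zero (small mass times huge per-symbol expansion), and the associated entropy contribution is of order $s_\infty\cdot K(j,q)$, which also stays bounded away from zero. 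Your sketch of controlling the tail by ``tightness together with the uniform bound on $\int\log|T'\circ\Pi|\,{\rm d}\nu_n$'' does not dispose of this: the uniform Lyapunov bound merely bounds $K(j,q)$ by a constant, it does not force it to zero. What the paper's Lemma after Lemma \ref{ub4} actually proves is the weaker statement $h(\mu)/\xi_1(\mu)\geq\limsup h(\mu_j)/\xi_1(\mu_j)$, and the key point (missing from your proposal) is the pairing: removing the tail loses at most $s_\infty K+\varepsilon$ of entropy (by Lemma \ref{ub4} applied to the normalized tail measure $\nu_{j,q}$) while losing at least $K-\varepsilon$ of Lyapunov exponent; since the ratio $h/\xi_1$ exceeds $s_\infty+\delta$, this cut only improves the ratio, so $h/\xi_1$ is semi-continuous even though $h$ alone is not. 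A diagonal argument is then needed because the tail quantity $c_{j,q}\psi_1(\Pi(\overline q))$ does not converge to zero uniformly in $j$. You also skip the paper's reduction from $\lambda$ to the locally constant potentials $\xi_k=\int\psi_k$, which is what lets the projection to $\Sigma_q$ interact cleanly with the Lyapunov exponent; this is a more minor omission, but it is part of what makes the semi-continuity argument go through.
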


The proof of the proposition is lengthy and it is presented in the next section.

\section{Proof of  Proposition \ref{semicont}}

The assertion of Proposition \ref{semicont} will follow
immediately from the following statement.
\begin{prop} \label{upper}
Given $\underline{\gamma}\in Z$ and a sequence of invariant
measures $\mu_j$ such that
\begin{itemize}
     \item[--] $h(\mu_j)/\lambda(\mu_j)>s_\infty + \delta$ for some
$\delta>0$,
     \item[--] $\int \phi_i {\rm{d}}\mu_j \to \gamma_i$ for all $i\in\N$,
\end{itemize}
there exists an invariant measure $\mu$ satisfying
$$\frac{h(\mu)}{\lambda(\mu)} = \limsup \frac{h(\mu_j)}{\lambda(\mu_j)} \quad {\rm and} \quad \int
\phi_i {\rm{d}}\mu = \gamma_i \ \forall i\in\N.$$
\end{prop}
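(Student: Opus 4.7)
Set $\alpha:=\limsup h(\mu_j)/\lambda(\mu_j)$ and, after passing to a subsequence, assume $h(\mu_j)/\lambda(\mu_j)\to\alpha>s_\infty+\delta$. The plan is to realize $\mu$ as a weak-$*$ accumulation point in $\Sigma$ of the lifts $\nu_j$ of $\mu_j$, using the hypothesis $\alpha>s_\infty+\delta$ both to extract tightness and to pass the $h/\lambda$ ratio through the limit.

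Fix $t\in(s_\infty,\alpha-\delta/2)$, so that $P(-t\log|T'|)<\infty$ by Lemma~\ref{criticalvalue}. The variational bound $h(\mu_j)-t\lambda(\mu_j)\le P(-t\log|T'|)$ combined with $h(\mu_j)\ge(t+\delta/2)\lambda(\mu_j)$ for large $j$ produces a uniform upper bound $\lambda(\mu_j)\le M$. Because the $I_i$ are disjoint in $[0,1]$, $\diam(I_i)\to 0$; bounded distortion then forces $\min_{I_i}\log|T_i'|\to\infty$ as $i\to\infty$. Combined with $\lambda(\nu_j)\le M$, a Markov estimate
\[
\nu_j(\{\omega_1\ge N\})\le\frac{M}{\min_{i\ge N}\log|T_i'|}
\]
shows $\{\nu_j\}$ is tight on the non-compact space $\Sigma$. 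A diagonal extraction over the countable family of cylinders then yields a shift-invariant probability weak-$*$ limit $\nu$, and we set $\mu:=\nu\circ\Pi^{-1}$. Boundedness of each $\phi_i$ together with $\var_n\phi_i\to 0$ makes $\phi_i\circ\Pi$ bounded and uniformly continuous on $\Sigma$, so tight weak-$*$ convergence gives $\int\phi_i\,d\mu=\gamma_i$ for every $i\in\N$. Fatou applied to the lower semicontinuous nonnegative function $\log|T'|$ gives the easy half $\lambda(\mu)\le\liminf\lambda(\mu_j)$.

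The main obstacle is the entropy side, because measure-theoretic entropy on a countable alphabet is only lower semicontinuous under weak-$*$ convergence in general, so $h(\mu)\ge\limsup h(\mu_j)$ does not follow automatically. To handle this I would first use Corollary~\ref{bern3} to replace each $\mu_j$ by a Bernoulli measure on finitely many symbols whose $h$, $\lambda$ and integrals of $\phi_1,\ldots,\phi_j$ match those of $\mu_j$ up to $\varepsilon_j\to 0$; the bounds $\lambda\le M$ and tightness persist. For the resulting Bernoulli-like sequence the Shannon entropy of the $1$-cylinder partition splits into a head part over symbols in $\{1,\ldots,N\}$, which converges continuously in $j$ because only finitely many cylinders are involved, and a tail part over symbols $\ge N$. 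The tail entropy is majorized, by the maximum-entropy principle subject to $\sum_{i\ge N}p_i\log|T_i'|\le M$, by the Shannon entropy of a Gibbs-type distribution $p_i\propto|T_i'|^{-t}$ whose normalizing sum $\sum_{i\ge N}|T_i'|^{-t}$ is finite (because $t>s_\infty$) and tends to $0$ as $N\to\infty$; hence the tail contribution vanishes uniformly in $j$. Letting $j\to\infty$ and then $N\to\infty$ yields $h(\mu)\ge\alpha\lambda(\mu)$, so $h(\mu)/\lambda(\mu)\ge\alpha$. The reverse inequality follows because $\mu$ has exact integrals $\gamma_i$ and so is a candidate in the supremum defining $\alpha_3(\underline\gamma)\le\alpha_1(\underline\gamma)$; in the intended application of Proposition~\ref{upper} (to a sequence realizing $\alpha_1(\underline\gamma)$) one has $\alpha=\alpha_1(\underline\gamma)$, and equality closes.
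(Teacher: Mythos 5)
Your opening moves match the paper: bound $\lambda(\mu_j)$ uniformly via $P(-t\log|T'|)<\infty$ for $t\in(s_\infty,\alpha)$, extract tightness of the lifts $\nu_j$, pass to a weak-$*$ limit $\nu$, and get the $\phi_i$-integrals and lower semicontinuity of $\lambda$ for free. (The paper does this with the locally constant approximants $\psi_k$ rather than $\log|T'|$ directly, but that is a cosmetic difference.) The gap is in the step you flag yourself as the main obstacle.

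The assertion that ``the tail contribution vanishes uniformly in $j$'' is false, and the Gibbs majorant you invoke actually demonstrates why. Write the tail mass $c=c_{j,N}=\sum_{i\ge N}p_i$ and the tail Lyapunov $L=L_{j,N}=\sum_{i\ge N}p_i\log|T_i'|$. The tail entropy is $-c\log c+c\,h(p/c)$ where $p/c$ is the normalized tail. Its maximizer \emph{is} of Gibbs type, but the Shannon entropy of the measure with $p_i\propto|T_i'|^{-t}$ on symbols $i\ge N$ is $h=t\lambda+\log Z_N(t)$, and both $\lambda\to\infty$ and $\log Z_N(t)\to-\infty$ as $N\to\infty$; $Z_N(t)\to 0$ only controls $h/\lambda$, driving it to $s_\infty$, it does \emph{not} drive $h$ itself to $0$. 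After multiplying by $c$, the tail entropy is bounded above by roughly $tL+o(1)$ with $t\downarrow s_\infty$, and $L$ need not vanish as $N\to\infty$ uniformly in $j$ (mass can escape to infinity carrying a fixed chunk of the Lyapunov budget $M$). So the tail entropy need not vanish; it is merely $\lesssim s_\infty L$.

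This is exactly the mechanism the paper uses and the key idea you are missing: one shows that the tail subtracts from the entropy at most $s_\infty\cdot K(j,q)+\varepsilon$ and subtracts from the Lyapunov exponent at least $K(j,q)-\varepsilon$ (equations (6.8)--(6.9) with $K(j,q)$ the tail Lyapunov), and since $h(\eta_j)/\xi_1(\eta_j)>s_\infty$, truncating to finitely many symbols can only increase the ratio up to small error. Upper semicontinuity is then harmless on each finite-alphabet $\Sigma_{q_l}$. There is also a genuinely technical point you skip: the boundary term $c_{j,q}\psi_1(\Pi(\overline q))$ does not tend to $0$ uniformly in $j$, and the paper extracts a joint diagonal subsequence in $(j,q)$ to control it, arguing by contradiction via $\xi_1(\eta_j)<\infty$. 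Without replacing the vanishing claim by this ``tail contributes at rate $\le s_\infty$'' comparison and the diagonal extraction, the proof does not close. Your closing remark about the reverse inequality is also hand-wavy (it argues about the intended application rather than the stated generality), but that is secondary to the main gap.
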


To prove the statement we will consider the locally constant potentials $\psi_k$ defined by
$$\psi_k(x)=\frac{1}{k}\sup_{y\in C_k(x)}\log |(T^k)'(y)|.$$
We then have the following straightforward lemma.
\begin{lem}\label{approx}
For any $\mu\in\mathcal{M}(T)$ such that $\lambda(\mu)<\infty$ we have
$$\left|\lambda(\mu)-\int \psi_k{\rm{d}}\mu\right|=\mathit{o}(1).$$
\end{lem}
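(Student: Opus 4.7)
The plan is to compare $\psi_k$ directly with the Birkhoff average $A_k(\log|T'|)$, and then use $T$-invariance of $\mu$ to eliminate the Birkhoff average. Observe that by the chain rule, $\log|(T^k)'(y)| = \sum_{j=0}^{k-1}\log|T'(T^j y)| = k\,A_k(\log|T'|)(y)$, so
\[
\psi_k(x) = \sup_{y\in C_k(x)} A_k(\log|T'|)(y).
\]
In particular, taking $y=x$ gives $\psi_k(x)\geq A_k(\log|T'|)(x)$, and taking the sup over $C_k(x)$ and then the oscillation gives
\[
0 \leq \psi_k(x) - A_k(\log|T'|)(x) \leq M^*(\log|T'|)(\omega_1,\ldots,\omega_k) - M_*(\log|T'|)(\omega_1,\ldots,\omega_k)
\]
for any $\omega \in \Sigma$ with $\Pi(\omega)=x$. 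By the standing assumption that $\log|T'|$ has variations uniformly tending to $0$, Lemma \ref{ancyl} applies and gives a bound $\varepsilon_k$, independent of $x$, with $\varepsilon_k\to 0$.

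Next I would integrate this uniform pointwise estimate against $\mu$. Since $\mu$ is a probability measure and the bound $\varepsilon_k$ is uniform in $x$,
\[
\left|\int \psi_k\,{\rm d}\mu - \int A_k(\log|T'|)\,{\rm d}\mu\right| \leq \varepsilon_k = \mathit{o}(1).
\]
Finally, by $T$-invariance of $\mu$ we have $\int A_k(\log|T'|)\,{\rm d}\mu = \int \log|T'|\,{\rm d}\mu = \lambda(\mu)$, and the hypothesis $\lambda(\mu)<\infty$ ensures everything is well-defined and finite. Combining the two displays yields $|\lambda(\mu) - \int\psi_k\,{\rm d}\mu|=\mathit{o}(1)$, as required.

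There is no real obstacle here: the lemma is essentially a uniform-distortion estimate for $\log|T'|$ packaged so that the sup over a $k$-cylinder can be traded for the Birkhoff average at a single point. The only small care needed is to invoke Lemma \ref{ancyl} with $\phi = \log|T'|$ to guarantee that the oscillation bound $\varepsilon_k$ is uniform across all $k$-cylinders, so that the pointwise $o(1)$ estimate survives integration against an arbitrary invariant probability measure.
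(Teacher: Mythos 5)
Your proof is correct and is precisely the argument the paper intends: the paper's own proof is the one-line remark that the claim ``follows simply because the variations of $\log|T'(x)|$ tend uniformly to $0$,'' and your write-up just makes explicit the comparison of $\psi_k$ with $A_k(\log|T'|)$ via Lemma \ref{ancyl} and the use of $T$-invariance. No differences in approach worth noting.
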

\begin{proof}
This follows simply because the variations of $\log |T'(x)|$ tend uniformly to $0$.
\end{proof}

We will first prove an analogous statement to Proposition
\ref{upper} for $\psi_k(x)$ and then use Lemma \ref{approx} to
deduce Proposition \ref{upper}. For convenience for
$\mu\in\mathcal{M}(T)$ we will let
$\xi_k(\mu)=\int\psi_k\text{d}\mu$.
\begin{lem}\label{kthapprox}
Fix any $k\in\N$. Given $\underline{\gamma}\in Z$ and a sequence of invariant
measures $\mu_j$ such that
\begin{itemize}
     \item[--] $h(\mu_j)/\xi_k(\mu_j)>s_\infty + \delta$ for some
$\delta>0$,
     \item[--] $\int \phi_i \text{d}\mu_j \to \gamma_i$ for all $i\in\N$,
\end{itemize}
there exists an invariant measure $\mu$ satisfying
$$\frac{h(\mu)}{\xi_k(\mu)} = \limsup_j \frac{h(\mu_j)}{\xi_k(\mu_j)};  \quad \text{and} \quad  \int
\phi_i \text{d}\mu = \gamma_i \ \forall i\in\N. $$
\end{lem}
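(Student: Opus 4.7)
The plan is to realize $\mu$ as a weak-$*$ subsequential limit of the given sequence $\{\mu_j\}$ on the symbolic space $\Sigma$, and to verify the required properties. The crucial preliminary step is establishing tightness of $\{\mu_j\}$ on the non-compact space $\Sigma=\N^{\N}$; once that is in hand, Prokhorov's theorem produces a convergent subsequence.

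I would first prove a Gibbs-type tail estimate: for every $\eta>0$ there exists $N=N(\eta)$ such that for every probability vector $(q_i)_{i>N}$,
\[
-\sum_{i>N} q_i \log q_i \;\leq\; (s_\infty+\eta)\sum_{i>N} q_i \log\frac{1}{\diam(I_i)}.
\]
This follows from Gibbs' inequality applied with reference probabilities $q_i^{\ast}=\diam(I_i)^{s_\infty+\eta}/Z_N$, using that $Z_N=\sum_{i>N}\diam(I_i)^{s_\infty+\eta}\to 0$ by definition of $s_\infty$. Combined with Kolmogorov--Sinai (so that $h(\mu)\leq H_\mu(\{I_i\})$) and the comparison $\xi_k(\mu)\asymp -\sum_i\mu([i])\log\diam(I_i)$ up to a uniform additive error (from the variation hypothesis on $\log|T'|$ and the definition of $\psi_k$), the strict inequality $h(\mu_j)/\xi_k(\mu_j)>s_\infty+\delta$ forces $\mu_j\bigl(\bigcup_{i>N}I_i\bigr)\to 0$ uniformly in $j$. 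Invariance under $\sigma$ then propagates this bound to all $k$-marginals, giving full tightness of $\{\mu_j\}$ on $\Sigma$.

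After extracting a weak-$*$ limit $\mu_j\to\mu$, invariance is inherited, and since each $f_i=\phi_i\circ\Pi$ is bounded and continuous, $\int f_i\,d\mu=\gamma_i$. The potential $\psi_k$ is constant on each $k$-cylinder, hence continuous on $\Sigma$ (though unbounded above), and tightness together with the Gibbs tail estimate yields uniform integrability, so $\xi_k(\mu_j)\to\xi_k(\mu)$. The most delicate step is the upper-semicontinuity estimate $h(\mu)\geq\limsup_j h(\mu_j)$, which combined with the previous convergence delivers the nontrivial half of the claimed equality of ratios. I would establish it by truncating $\xi=\{I_i\}$ to the finite partition $\xi_N$ lumping all $I_i$ with $i>N$ into a single cell: for fixed $N$ and $n$, $H_{\mu_j}(\xi_N^n)\to H_\mu(\xi_N^n)$ by weak-$*$ convergence on the clopen $n$-cylinders of $\xi_N^n$, whence $h_{\mu_j}(\sigma,\xi_N)\to h_\mu(\sigma,\xi_N)$; the discrepancy $h(\mu_j)-h_{\mu_j}(\sigma,\xi_N)$ is then bounded by the Gibbs estimate applied to the conditional distribution on the lumped cell and vanishes uniformly as $N\to\infty$. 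The main obstacle is precisely this entropy-semi-continuity step, since countable-state entropy is not automatically upper semi-continuous, and the strict inequality $h/\xi_k>s_\infty+\delta$ is used in an essential way---at the critical threshold $s_\infty$ entropy genuinely escapes with the mass, which is the mechanism behind the gap between $\alpha_1$ and $\alpha_3$ in Theorem \ref{main}.
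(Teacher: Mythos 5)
Your Gibbs tail estimate is correct and is essentially the same tool the paper uses (it is \eqref{per}, obtained there via Lemma \ref{ub4} from a pressure inequality; your direct Gibbs-inequality derivation is an acceptable alternative). The tightness step also goes through: your estimate gives $\xi_1(\mu_j)\leq K(\delta)$ uniformly, and tightness of the marginals follows since $\psi_1\to\infty$ on high symbols. Up to this point you are on the same track as the paper.

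However, the two claims you then use to close the argument are both false in general, even under the hypothesis $h(\mu_j)/\xi_k(\mu_j)>s_\infty+\delta$, and this is a genuine gap. Consider the Gauss map ($s_\infty=1/2$), fix a finitely supported ergodic $\mu_0$ with $h(\mu_0)/\lambda(\mu_0)=0.9$, and take $\nu_n$ as in Lemma \ref{seqmeas} with $\lambda(\nu_n)\to\infty$ and $h(\nu_n)/\lambda(\nu_n)\to 1/2$. Put $\mu_j=(1-p_j)\mu_0+p_j\nu_{k_j}$ with $p_j\lambda(\nu_{k_j})=1$. Then $\mu_j\to\mu_0$ weakly, $\xi_1(\mu_j)\to\lambda(\mu_0)+1>\xi_1(\mu_0)$ (so $\xi_k$ is not continuous: your uniform integrability claim fails, and indeed the paper only ever invokes lower semicontinuity of $\xi_1$, citing \cite{JMU}), and $h(\mu_j)\to h(\mu_0)+\tfrac12>h(\mu_0)$ (so entropy is not upper semicontinuous: the discrepancy $h(\mu_j)-h_{\mu_j}(\sigma,\xi_N)$ does \emph{not} vanish uniformly in $j$ as $N\to\infty$, because the escaping mass carries both entropy and Lyapunov exponent away to infinity with ratio $\approx s_\infty$). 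Still $h(\mu_0)/\xi_1(\mu_0)=0.9>0.7=\lim h(\mu_j)/\xi_1(\mu_j)$, which is exactly the form of semicontinuity the paper proves.

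The fix is to abandon the attempt to control $h$ and $\xi_k$ separately and instead control their ratio. Splitting the alphabet at level $q$ as the paper does, one obtains $h(\eta_j)-h(\eta_j|_q)\leq s_\infty K(j,q)+o_q(1)$ and $\xi_1(\eta_j)-\xi_1(\eta_j|_q)\geq K(j,q)-o_q(1)$, where $K(j,q)=c_{j,q}\xi_1(\nu_{j,q})$ need \emph{not} tend to $0$ uniformly in $j$. But since $h(\eta_j)/\xi_1(\eta_j)>s_\infty+\delta$, subtracting at most $s_\infty K$ from the numerator and at least $K$ from the denominator can only increase the ratio (up to the $o_q(1)$ errors), so $h(\eta_j|_q)/\xi_1(\eta_j|_q)\gtrsim h(\eta_j)/\xi_1(\eta_j)$. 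Upper semicontinuity on the finite-alphabet spaces $\Sigma_q$ then yields the conclusion. Note that even this argument requires passing to a subsequence (the paper's diagonal choice of $\eta_{j_k}$ and $q_l$) to handle the non-uniform convergence $c_{j,q}\psi_1(\Pi(\overline{q}))\to 0$; that step is also missing from your sketch.
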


Note that to prove Lemma \ref{kthapprox} it suffices to prove the
statement for $k=1$ since the statement for general $k$ can then
be deduced by considering the map $T^k$.
The proof of Lemma \ref{kthapprox} will now follow by a series of technical lemmas.

\begin{lem}\label{ub4}
For any $\delta>0$ there is $K(\delta)>0$ such that if $\mu$ is a
$T$-invariant measure and $\frac{h(\mu)}{\xi_1(\mu)}>s_{\infty}
+ \delta$ then $h(\mu)\leq\xi_1(\mu)\leq K(\delta)$.
\end{lem}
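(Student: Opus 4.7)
The inequality $h(\mu)\le \xi_1(\mu)$ is essentially free: since $\psi_1(x)=\sup_{y\in C_1(x)}\log|T'(y)|\ge \log|T'(x)|$ pointwise, integrating yields $\xi_1(\mu)\ge\lambda(\mu)$, and in general $h(\mu)\le\lambda(\mu)$. The real content is the uniform upper bound on $\xi_1(\mu)$, and my plan is to derive it from a single application of the variational principle to the locally constant potential $\psi_1$.

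Fix $t=s_\infty+\delta/2$. Since $\psi_1$ is constant on every level-$1$ cylinder, $\var_n(\psi_1)=0$ for all $n\ge 1$, so $-t\psi_1$ is an admissible potential. Moreover, $\psi_1\ge\log|T'|$ implies $-t\psi_1\le -t\log|T'|$, hence $P(-t\psi_1)\le P(-t\log|T'|)$, and the latter is finite by Lemma \ref{criticalvalue} because $t>s_\infty$. Consequently the quantity $K(\delta):=(2/\delta)\,P(-(s_\infty+\delta/2)\psi_1)$ is well-defined and depends only on $\delta$.

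Now the variational principle (applied to $\mu$ and the potential $-t\psi_1$) gives
\[
h(\mu)-t\,\xi_1(\mu)\le P(-t\psi_1),
\]
while the hypothesis $h(\mu)/\xi_1(\mu)>s_\infty+\delta=t+\delta/2$ rearranges to
\[
h(\mu)-t\,\xi_1(\mu)>\tfrac{\delta}{2}\,\xi_1(\mu).
\]
Combining the two inequalities yields $\xi_1(\mu)<(2/\delta)\,P(-t\psi_1)=K(\delta)$. The only delicate point to handle is the interpretation of the ratio $h(\mu)/\xi_1(\mu)$ when the quantities might be infinite, but the variational-principle inequality automatically forces $\xi_1(\mu)<\infty$ because its right-hand side is finite, and $h(\mu)\le\xi_1(\mu)$ then handles the numerator. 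I expect no further obstacle: the whole statement reduces to a short pressure-comparison argument and the finiteness of $P(-t\log|T'|)$ above the critical exponent $s_\infty$.
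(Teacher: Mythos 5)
Your proof is correct and takes essentially the same route as the paper: fix $t$ strictly between $s_\infty$ and $s_\infty+\delta$, note $P(-t\psi_1)\le P(-t\log|T'|)<\infty$ via Lemma~\ref{criticalvalue}, apply the variational principle to $-t\psi_1$, and combine with the hypothesis to bound $\xi_1(\mu)$. The only difference is the specific choice $t=s_\infty+\delta/2$ and a slightly more explicit treatment of the finiteness and the $h(\mu)\le\lambda(\mu)\le\xi_1(\mu)$ chain, both of which the paper leaves terse.
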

\begin{proof}
We fix $t\in\R$ such that $s_{\infty}<t<s_\infty + \delta$.  By the methods from Lemma \ref{criticalvalue} we get $P(-t\psi_1)<\infty$. So by the
variational principle we get $h(\mu)-t\xi_1(\mu)\leq P(-t\psi_1)$.
Since $\frac{h(\mu)}{\xi_1(\mu)}>s_{\infty}
+ \delta$, we have
$$P(-t\psi_1)\geq  (s_\infty + \delta -t)\xi_1(\mu).$$
So, $$\xi_1(\mu)\leq \frac{P(-t\log T')}{s_\infty +
\delta-t}.$$
\end{proof}
Therefore if the hypothesis of Lemma \ref{kthapprox} holds then we
can deduce that the sequence of measures $\{\mu_j\}$ is tight and
so will have at least one limit point $\mu$ which will be a
$T$-invariant probability measure. Moreover by the lower-semi
continuity of $\xi_1(\mu_j)$ (see Lemma 1 in \cite{JMU}), by the simple fact that $h(\mu)\leq\lambda(\mu)$ and the fact that $\lambda(\mu)\leq\xi_1(\mu)$ we know
that $h(\mu)\leq\xi_1(\mu)\leq K$. To finish the proof of
Proposition \ref{upper} we would only need entropy to be upper semi-continuous.

Unfortunately, the entropy is not upper semicontinuous on
${\mathcal{M}}(T)$. We have, however, a limited form of semicontinuity
when we consider entropy divided by Lyapunov exponent, and this will be enough:

\begin{lem}
Let $\{\mu_j\}_{j\in\N}$ be a sequence of measures converging weakly to
$\mu$ and satisfying that
$h(\mu_j)/\xi_1(\mu_j)>s_\infty + \delta$ for some $\delta>0$ and all $j\in\N$. We have
\[
\frac {h(\mu)} {\xi_1(\mu)} \geq \limsup \frac {h(\mu_j)}
{\xi_1(\mu_j)}.
\]
\end{lem}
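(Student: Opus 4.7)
The plan is to combine a Gibbs-inequality control of the ``tail'' of the one-symbol partition with the upper semicontinuity of entropy relative to finite partitions. The principal obstacle is exactly that the generating partition $\mathcal{P} = \{[i] : i\in\mathbb{N}\}$ on $\Sigma$ has infinitely many atoms, so $\mu \mapsto h(\mu,\mathcal{P})$ is in general only \emph{lower} semicontinuous under weak$^*$ limits; the whole point is to show that this failure of upper semicontinuity is small enough to be absorbed into a small multiple of $\xi_1$, and this is exactly what the strict inequality $h(\mu_j)/\xi_1(\mu_j) > s_\infty+\delta$ buys us.

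Concretely, I would first pass to a subsequence so that $h(\mu_j)/\xi_1(\mu_j) \to \alpha := \limsup_j h(\mu_j)/\xi_1(\mu_j)$ and, using Lemma~\ref{ub4} together with the lower semicontinuity of $\xi_1$, a further subsequence so that $\xi_1(\mu_j) \to c$ with $\xi_1(\mu) \leq c \leq K(\delta)$. Fix $t \in (s_\infty, s_\infty+\delta)$, so that $W_{t,N} := \sum_{i>N} e^{-t s_i}$ is finite for each $N$ and tends to $0$ as $N\to\infty$ (this is where the definition of $s_\infty$ enters, cf.\ Lemma~\ref{criticalvalue}). For each $N$ consider the finite partition $\mathcal{P}_N = \{[1],\ldots,[N],R_N\}$ with $R_N = \bigcup_{i>N}[i]$; the standard inequality
\[
h(\mu_j) \leq h(\mu_j,\mathcal{P}_N) + H_{\mu_j}(\mathcal{P}\mid \mathcal{P}_N)
\]
combined with Gibbs' inequality applied to the conditional probabilities $\mu_j([i])/\mu_j(R_N)$ tested against $e^{-ts_i}/W_{t,N}$ gives
\[
H_{\mu_j}(\mathcal{P}\mid \mathcal{P}_N) \leq t\bigl(\xi_1(\mu_j)-\textstyle\sum_{i\leq N}s_i\mu_j([i])\bigr) + \mu_j(R_N)\log W_{t,N},
\]
and the last term is nonpositive for large $N$.

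Since $\mathcal{P}_N$ has finitely many clopen atoms, $H_\mu(\mathcal{P}_N^{(n)})$ is continuous in $\mu$ for every $n$, and therefore $h(\mu,\mathcal{P}_N) = \inf_n \frac{1}{n}H_\mu(\mathcal{P}_N^{(n)})$ is upper semicontinuous; the finite sum $\sum_{i\leq N}s_i\mu_j([i])$ is continuous. Taking $\limsup_j$ along the chosen subsequence yields
\[
\alpha c \leq h(\mu,\mathcal{P}_N) + t\bigl(c - \textstyle\sum_{i\leq N}s_i\mu([i])\bigr).
\]
Now let $N\to\infty$: by monotone convergence $\sum_{i\leq N}s_i\mu([i]) \nearrow \xi_1(\mu)$, and $h(\mu,\mathcal{P}_N)\to h(\mu)$ because $H_\mu(\mathcal{P}\mid\mathcal{P}_N)\to 0$ (which in turn follows from $H_\mu(\mathcal{P})<\infty$, obtained by applying the same Gibbs bound to $\mu$ itself together with $\xi_1(\mu)\leq c<\infty$). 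This produces
\[
\alpha c \leq h(\mu) + t(c - \xi_1(\mu)), \qquad \text{i.e.}\qquad h(\mu) \geq (\alpha - t)c + t\,\xi_1(\mu).
\]

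The endgame uses the gap between $\alpha$ and $s_\infty$ in a crucial way: since $\alpha > s_\infty+\delta$, we may choose $t = s_\infty + \delta/2 < \alpha$, so $\alpha - t > 0$, and together with $c \geq \xi_1(\mu) > 0$ this gives $(\alpha-t)c \geq (\alpha-t)\xi_1(\mu)$, hence $h(\mu) \geq \alpha\,\xi_1(\mu)$, which is the desired inequality $h(\mu)/\xi_1(\mu) \geq \alpha$. The hard part is really the tail bound in the middle paragraph: it is exactly the $s_\infty$-summability hypothesis that makes the Gibbs-type comparison with $e^{-ts_i}$ possible, and it is exactly the strict inequality $\alpha > s_\infty + \delta$ that lets the resulting error term $t(c-\xi_1(\mu))$ be dominated by the gain $(\alpha-t)(c-\xi_1(\mu))$ in the last step.
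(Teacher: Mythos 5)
Your proof is correct, and it is genuinely different from the one given in the paper, although both hinge on the same key idea: the failure of upper semicontinuity of entropy is confined to the ``tail'' partition atoms, and the summability exponent $s_\infty$ forces that tail contribution to be dominated by a small multiple of $\xi_1$, which the gap $\delta$ absorbs.

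Where the two arguments diverge is in how they bound the tail contribution to entropy. The paper decomposes via two partitions $\mathcal{A}$ (keeping the first $q-1$ symbols and lumping the rest) and $\mathcal{B}$ (the reverse lumping), writes $h(\eta_j)\le h(\eta_j|\mathcal{A})+h(\eta_j|\mathcal{B})$, and estimates $h(\eta_j|\mathcal{B})$ through an auxiliary Bernoulli measure $\nu_{j,q}$ on the tail symbols, to which Lemma~\ref{ub4} is applied. This introduces the awkward cross-term $c_{j,q}\,\psi_1(\Pi(\overline{q}))$, whose smallness is not uniform in $j$ and has to be salvaged by a diagonal sub-subsequence argument. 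Your version instead uses the Rokhlin inequality $h(\mu_j)\le h(\mu_j,\mathcal{P}_N)+H_{\mu_j}(\mathcal{P}\mid\mathcal{P}_N)$ for the finite truncated partition $\mathcal{P}_N$, and controls the conditional entropy directly by Gibbs' inequality against the reference weights $e^{-ts_i}$ with $t\in(s_\infty,s_\infty+\delta)$ (this is precisely where the finiteness of $\sum_i e^{-ts_i}$, i.e.\ the definition of $s_\infty$, enters). The resulting bound
$H_{\mu_j}(\mathcal{P}\mid\mathcal{P}_N)\le t\bigl(\xi_1(\mu_j)-\sum_{i\le N}s_i\mu_j([i])\bigr)+\mu_j(R_N)\log W_{t,N}$
is expressed entirely through $\xi_1(\mu_j)$ and finitely many cylinder masses, all continuous in $\mu_j$, so the passage to the weak$^*$ limit is immediate and the subsequence gymnastics of the paper's proof disappear. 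The same Gibbs bound gives $H_\mu(\mathcal{P})<\infty$ for free (since $\xi_1(\mu)\le c<\infty$ by lower semicontinuity), which justifies $h(\mu,\mathcal{P}_N)\uparrow h(\mu)$, and your endgame --- choosing $t<\alpha$ and using $c\ge\xi_1(\mu)$ to absorb the error --- is a clean way to extract $h(\mu)\ge\alpha\,\xi_1(\mu)$. In short, both proofs are sound, but yours streamlines the tail estimate by replacing the intermediate Bernoulli measure $\nu_{j,q}$ and the attendant sub-subsequence argument with a single Gibbs-inequality computation.
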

\begin{proof}
Denote by $\eta_j$ the measure on $\Sigma$ such that $\mu_j=\eta_j\circ\Pi^{-1}$.
We start by choosing a subsequence of $\eta_j$ such that
$h(\eta_j)/\xi_1(\eta_j)$ converges to the maximal possible limit.

Given $q$, consider the projection $\pi_q:\Sigma\to\Sigma_q$
obtained by replacing in a sequence $\omega_1, \omega_2,\ldots$ all symbols
$q+1, q+2,\ldots$ by symbol $q$. The projection of a measure $\nu$
under $\pi_q$ will be denoted by $\nu|_{q}$.

Let us denote $$c_{j,q}=\sum_{k> q} \eta_j([k])$$
$$\tilde{\lambda}_q:=|\log \inf_{x\in \cup_{l=q}^\infty I_l}\{|T'(x)|\}|.$$

Note that
$c_{j,q}$ is uniformly (in $j$) converging to 0 as $q$ increases.
Consider the two partitions:
\[
{\mathcal{A}} = \Big\{[1], [2],\ldots,[q-1], \bigcup_{k=q}^\infty [k]\Big\},
\quad
{\mathcal{B}} =\Big\{\bigcup_{k=1}^q [k], [q+1], [q+2],\ldots \Big\}.
\]
We have
\[
h(\eta_j) = h(\eta_j|{\mathcal{A}}\vee{\mathcal{B}}) \leq h(\eta_j|{\mathcal{A}})
+ h(\eta_j|{\mathcal{B}}).
\]
The former summand is $h(\eta_j|_q)$. The latter can be bounded
from above by the entropy of the corresponding Bernoulli measure.
It has one atom with measure $1-c_{j,q}$ and the other atoms are
cylinders $[k]$ $(k>q)$. Hence,
\begin{align} \label{eqqq}
\begin{split}
h(\eta_j|{\mathcal{B}}) &\leq (1-c_{j,q})|\log (1-c_{j,q})| + c_{j,q}
|\log c_{j,q}| + c_{j,q} h(\nu_{j,q})\\
&\leq c_{j,q} h(\nu_{j,q}) + \varepsilon_0(q),
\end{split}
\end{align}
where $\nu_{j,q}$ is the Bernoulli measure obtained by assigning
on each symbol $k>q$ probability $\eta_j([k])/c_{j,q}$, and
$\varepsilon_0(q)$ converges to $0$ as $q\to \infty$. We know that
$$\xi_1(\nu_{j,q})\geq \log \inf_{x\in \cup_{l=q}^\infty I_l}\{|T'(x)|\}=\tilde{\lambda}_q$$ which must tend to $\infty$ as $q$ goes to $\infty$. Thus by Lemma
\ref{ub4}
\begin{equation} \label{per}
\frac{h(\nu_{j,q})}{\xi_1(\nu_{j,q})} \leq s_\infty +
\varepsilon_1(q)
\end{equation}
for some $\varepsilon_1(q)$ converging to $0$ as $q\to\infty$. At
the same time,
\begin{equation} \label{eqq}
\lambda(\eta_j) \geq \sum_k \eta_j([k])\psi_1(\Pi(\overline{k})) =
\lambda(\eta_{j}|_q) + c_{j,q}(\lambda(\nu_{j,q}) -
\psi_1(\Pi(\overline{q}))).
\end{equation}

As $\xi_1(\eta_j)<\infty$, $c_{j,q} \psi_1(\Pi(\overline{q}))$
must converge to $0$, but this convergence is not uniform. Still,
from the sequence $\{\eta_j\}$ we can choose a subsequence
$\eta_{j_k}$, a sequence $q_l$ and a sequence
$\varepsilon_2(q_l)\to 0$ such that for each $q_l$ we have
\[
\limsup_{j_k} c_{j_k,q_l} \psi_1(\Pi(\overline{q_l})) <
\varepsilon_2(q_l).
\]
Indeed, otherwise we would be able to choose a sequence
$\eta_{j_k}$ such that for some $c>0$ and for any sufficiently big
$q$ we would have
\[
\liminf_{j_k} c_{j_k, q} \psi_1(\Pi(\overline{q})) > c
\]
and that would imply that $\xi_1(\eta_j)=\infty$.

So, finally we get by \eqref{eqqq}, \eqref{per} and
\eqref{eqq} and Lemma \ref{approx} that given $l$, for all $k$ big enough
we have
\begin{equation} \label{eqqq1}
h(\eta_{j_k}) - h(\eta_{j_k}|_{q_l}) < s_\infty \cdot K(j_k, q_l) +
\varepsilon_3(q_l,\delta)
\end{equation}
and
\begin{equation} \label{eqq1}
\xi_1(\eta_{j_k}) - \xi_1(\eta_{j_k}|_{q_l}) > K(j_k, q_l) -
\varepsilon_3(q_l,\delta),
\end{equation}
where $K(j,q)=c_{j,q} \xi_1(\nu_{j,q})> 0$.

Consider now the following diagram:
\begin{center}
\begin{picture}(100,90)(0,-65) \put(5,0){$\eta_{j_k}$}
\put(35,1.5){$\dashrightarrow$}
\put(75,0){$\eta$}
    \put(10,-10){\vector(0,-1){30}}
     \put(79,-38){\vector(0,1){30}}
     \put(1,-51){$\eta_{j_k}|_{q_l}$}
\put(75,-51){$\eta|_{q_l}$} 
\put(35,-48){$\dashrightarrow$}
\end{picture}
\end{center}
By \eqref{eqqq1} and \eqref{eqq1}, given $l$, for $k$ big enough
\[
\frac {h(\eta_{j_k}|_{q_l})} {\xi_1(\eta_{j_k}|_{q_l})} \geq \frac
{h(\eta_{j_k})} {\xi_1(\eta_{j_k})} - \varepsilon(q_l,\delta).
\]
The convergence of $\eta_{j_k}|_{q_l}$ to $\eta|_{q_l}$ takes
place in space of invariant measures of $(\Sigma_{q_l},\sigma)$,
where entropy (and hence $h/\xi_1$) is upper semicontinuous.
Finally, $h(\eta)=\lim h(\eta|_{q_l})$. Taking $\mu=\eta\circ \Pi^{-1}$, we have
\[
\frac {h(\mu)} {\xi_1(\mu)} > \lim \frac {h(\eta_{j_k})}
{\xi_1(\eta_{j_k})} - \varepsilon(q_l).
\]
As we can choose arbitrarily big $q_l$, $\varepsilon(q_l)$ is
arbitrarily small. We are done.
\end{proof}
The statement of Lemma \ref{kthapprox} now follows.

To complete the proof of Proposition \ref{upper} choose a sequence of $T$-invariant
measures $\mu_j$ such that
\begin{itemize}
     \item[--] $h(\mu_j)/\lambda(\mu_j)>s_\infty + \delta$ for some
$\delta>0$,
     \item[--] $\int \phi_i \text{d}\mu_j \to \gamma_i$ for all $i\in\N$.
     \end{itemize}
We choose $\epsilon>0$ sufficiently small such that
$h(\mu_j)/(\lambda(\mu_j)+\epsilon)>s_{\infty}+\delta/2$. We then
choose $k$ sufficiently large such that $\var_k(\log
|T'(x)|)<\epsilon$ and so in particular
$\xi_k(\mu_j)-\lambda(\mu_j)<\epsilon$. Thus
$h(\mu_j)/\xi_k(\mu_j)>s_{\infty}+\delta/2$ and we may apply Lemma
\ref{kthapprox} to show that there exists a $T$-invariant measure
$\mu$ such that $h(\mu)/\xi_k(\mu) = \limsup_{j\to\infty}
h(\mu_j)/\xi_k(\mu_j)$ and $\int \phi_i \text{d}\mu = \gamma_i$
for all $i\in\N$. Moreover
\begin{eqnarray*}\limsup_{j\to\infty}\frac{h(\mu_j)}{\lambda(\mu_j)}&\geq& \limsup_{j\to\infty}\frac{h(\mu_j)}{\xi_k(\mu_j)}=h(\mu)/\xi_k(\mu)\\
&\geq&\frac{h(\mu)}{\lambda(\mu)+\epsilon}\geq
\frac{h(\mu)}{\lambda(\mu)}+\frac{\epsilon h(\mu)}{\lambda(\mu)^2+\epsilon\lambda(\mu)}
\end{eqnarray*}
and Proposition \ref{upper} now easily follows.

This completes the proof of Theorem \ref{main}.
\section{Examples}
We now look at some examples where our results can be applied. We will consider an application to frequency of digits which applies the fact that our level sets are defined using countably many functions.
We then consider two cases which look at possible behaviour when the level set is just determined by one bounded function.
\subsection{Frequency of digits}
There have been many papers on the Hausdorff dimension of sets determined by the frequency of digits for various types of expansion, see for example \cite{bes}, \cite{BSS2}, \cite{E}, \cite{FLM}, \cite{FLMW}, \cite{O2}. Here we show how our results can be applied to give results in this direction in the setting of expanding maps with countably many branches. We take a partition $\{I_i\}_{i\in\N}$ and a map $T$ as in the first section. We define $\phi_i$ to be the characteristic function for the interval $I_i$, that is
$$\phi_i(x)=\chi_{I_i}(x):=\left\{\begin{array}{lll}1&\text{ if }&x\in I_i\\0&\text{ if }&x\notin I_i\end{array}\right.$$
For an infinite vector $\underline{p}=(p_1,p_2,\ldots)$ where $\sum_{i=1}^{\infty}p_i\leq 1$  let
$$\Lambda_{\underline{p}}=\{x\in\Lambda:\lim_{n\rightarrow\infty} A_n\phi_i(x)=p_i\text{ for all }i\in\N\}.$$

The assumptions of Theorem \ref{main} are all satisfied and it is easy to see that all such  $\underline{p}$ belong to $Z$. Therefore
$$\dim\Lambda_{\underline{p}}=\max\left\{s_{\infty},
\ \alpha_3(\underline{p})
\right\}$$
where
$$\alpha_3(\underline{p})=\sup_{\mu\in\mathcal{M}(T)}\left\{\frac{h(\mu)}{\lambda(\mu)}:\mu(I_i)=p_i \ \forall i\in\N,\text{ }h(\mu)<\infty\right\}.$$
We refer to these sets $\Lambda_{\underline{p}}$ as ``sets of digit frequency". This is because in the case where $T$ is the Gauss map, $T(x)=1/x\mod 1$, $A_n\phi_i(x)$ gives the frequency of $i$ in the first $n$ terms of the continued fraction expansion of $x$. In particular our work shows that the dimension of such a set is always bounded below by $s_{\infty}$ even if the frequencies sum to less than $1$. Note that $s_{\infty}=1/2$ when $T$ is the Gauss map. This problem has already been studied in the setting of continued fractions (\cite{FLM}), and in the countable state symbolic space (\cite{FLMW}). Our work shows that this phenomenon extends to more general countable branch expanding maps. We should also point out that there was a step missing from the proof in \cite{FLM} where the argument of how to go from the statement of Theorem \ref{main0} to Theorem \ref{main} was not given. The section on the proof of Theorem \ref{main} shows how this can be done.
\subsection{Harmonic averages for continued fractions}
For another example we again let $T$ be the Gauss map. If we just take one potential $\phi:[0,1]\backslash\Q\rightarrow\R$ defined by $\phi(x)=\frac{1}{a_1(x)}$ where $a_1(x)$ is the first digit in the continued expansion of $x$ then Theorem \ref{main} is still applicable. In particular if
for $\alpha\in [0,1]$, let
$$\Lambda_{\alpha}=\left\{x\in [0,1]\backslash\Q:\lim_{n\rightarrow\infty}\frac{\frac{1}{a_1(x)}+\frac{1}{a_2(x)}+\cdots+\frac{1}{a_n(x)}}{n}=\alpha\right\}$$
then we have
$$\dim\Lambda_{\alpha}=\max\left\{\frac{1}{2},\sup_{\mu\in\mathcal{M}(T)}\left\{\frac{h(\mu)}{\lambda(\mu)}:\int\phi\text{d}\mu=\alpha,\text{ }h(\mu)<\infty\right\}\right\}.$$
From this we can deduce that $$
\dim\Lambda_0=\Lambda_1=\frac{1}{2}.$$
    For $\Lambda_1$, note that the Dirac measure on the point $\frac{\sqrt{5}-1}{2}$ is the only $T$-invariant measure $\nu$ with $\int\phi\text{d}\nu=1$. However, despite the fact that this measure clearly has dimension $0$, the set $\Lambda_1$ still has dimension $\frac{1}{2}$.

Furthermore, in this case we can show that the only points where the dimension achieves the lower bound $\frac{1}{2}$ are the endpoints of the spectrum.
\begin{prop}
For all $\alpha\in (0,1)$
$\dim \Lambda_{\alpha}>\frac{1}{2}$.
\end{prop}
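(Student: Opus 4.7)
The strategy is to invoke Theorem~\ref{main}: for suitable $p\in(0,1)$ and $n\in\N$ the invariant measure $\mu=p\,\delta_{x_1}+(1-p)\,\delta_{x_n}$ satisfies $\int\phi\,\text{d}\mu=p+(1-p)/n=\alpha$, so every $\alpha\in(0,1)$ lies in $Z_0$; Theorem~\ref{main} then gives $\dim\Lambda_\alpha=\max\{\tfrac12,\alpha_3(\alpha)\}$, and it suffices to produce an invariant measure $\mu$ with $h(\mu)<\infty$, $\int\phi\,\text{d}\mu=\alpha$, and $h(\mu)/\lambda(\mu)>\tfrac12$. My candidates come from the one-parameter family of equilibrium states $\mu_q$ for the tilted potentials $q\phi-t(q)\log|T'|$, where $t(q)$ is defined implicitly by $P(q\phi-t(q)\log|T'|)=0$. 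Since $\phi$ is bounded and $P(-t\log|T'|)=+\infty$ for $t\le s_\infty=\tfrac12$ (Lemma~\ref{criticalvalue}), a pressure-perturbation bound of size $\|q\phi\|_\infty$ yields $t(q)>\tfrac12$ for every $q\in\R$. Standard Ruelle--Perron--Frobenius theory for the Gauss map produces a unique ergodic Gibbs equilibrium state $\mu_q$, and implicit differentiation of $P=0$ in $q$ gives the Legendre identities $\alpha(q):=\int\phi\,\text{d}\mu_q=t'(q)\lambda(\mu_q)$ and $h(\mu_q)/\lambda(\mu_q)=t(q)-q\,t'(q)$; the map $q\mapsto\alpha(q)$ is continuous and strictly increasing, with $\alpha(0)=\alpha_G:=\int\phi\,\text{d}\mu_G$, $\lim_{q\to-\infty}\alpha(q)=0$, and $\lim_{q\to+\infty}\alpha(q)=1$.

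For $\alpha\in(0,\alpha_G]$ the construction is immediate: pick $q\le 0$ with $\alpha(q)=\alpha$ and set $\mu:=\mu_q$. Since $-q\ge 0$ and $t'(q)>0$, one has $h(\mu_q)/\lambda(\mu_q)=t(q)-q\,t'(q)\ge t(q)>\tfrac12$.

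For $\alpha\in(\alpha_G,1)$ the relevant $q$ is positive and $\dim\mu_q$ may drop below $\tfrac12$ (indeed $\dim\mu_q\to 0$ as $\alpha\to 1$, since $\mu_q\to\delta_{x_1}$), so the equilibrium state alone is not enough. I would instead take $q$ \emph{very negative}, so that $\alpha(q)$ is small and $\lambda(\mu_q)$ is large, and form the convex combination
\[
\mu:=(1-\eta)\mu_q+\eta\,\delta_{x_1},\qquad \eta:=\frac{\alpha-\alpha(q)}{1-\alpha(q)}\in(0,1),
\]
which enforces $\int\phi\,\text{d}\mu=\alpha$. Ergodicity of both components yields $h(\mu)=(1-\eta)h(\mu_q)$ (finite) and $\lambda(\mu)=(1-\eta)\lambda(\mu_q)+\eta\lambda_1$, with $\lambda_1:=\log|T'(x_1)|=2\log((\sqrt{5}+1)/2)$, so a short computation gives
\[
\frac{h(\mu)}{\lambda(\mu)}=\frac{\dim\mu_q}{\,1+\dfrac{\eta\,\lambda_1}{(1-\eta)\,\lambda(\mu_q)}\,},
\]
and the desired inequality $h(\mu)/\lambda(\mu)>\tfrac12$ is equivalent to $\eta\lambda_1/((1-\eta)\lambda(\mu_q))<2(\dim\mu_q-\tfrac12)$.

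The main obstacle is the asymptotic comparison as $q\to-\infty$. Since $\alpha(q)\to 0$ we have $\eta\to\alpha<1$ and $\eta/(1-\eta)$ bounded, while $\lambda(\mu_q)\to\infty$; so the left-hand side above is $O(1/\lambda(\mu_q))$, and the delicate point is that $\dim\mu_q$ must approach $\tfrac12$ strictly more slowly than this rate. I would establish this via the transfer operator $(\mathcal L_{q,t}f)(x)=\sum_i e^{q/i}(i+x)^{-2t}f(1/(i+x))$: writing $\epsilon:=t(q)-\tfrac12$ and substituting $i=|q|u$, the core sum $\sum_i e^{-|q|/i}\,i^{-1-2\epsilon}$ reduces to the $\Gamma$-integral $\int_0^\infty u^{-1-2\epsilon}e^{-1/u}\,du=\Gamma(2\epsilon)\sim 1/(2\epsilon)$, so the pressure-zero equation forces $|q|^{2\epsilon}\sim 1/(2\epsilon)$ and hence $\epsilon\sim\log\log|q|/(2\log|q|)$; the analogous computation with an extra $\log i$-weight gives $\lambda(\mu_q)\sim 2\log|q|$. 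Using $\dim\mu_q\ge t(q)=\tfrac12+\epsilon$ for $q<0$, the product $(\dim\mu_q-\tfrac12)\cdot\lambda(\mu_q)\gtrsim\log\log|q|\to\infty$ dominates the bounded quantity $\alpha\lambda_1/(2(1-\alpha))$; hence for $|q|$ large enough the required inequality holds, $h(\mu)/\lambda(\mu)>\tfrac12$, and Theorem~\ref{main} yields $\dim\Lambda_\alpha>\tfrac12$.
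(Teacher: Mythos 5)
Your proof is correct and rests on the same key mechanism as the paper's, namely: produce a measure $\mu_q$ with Lyapunov exponent $\lambda(\mu_q)\to\infty$, dimension $\dim\mu_q$ tending to $\tfrac12$ slowly enough that $(\dim\mu_q-\tfrac12)\lambda(\mu_q)\to\infty$, and $\int\phi\,\text{d}\mu_q\to 0$, and then take a convex combination with the Dirac mass at the golden-mean fixed point to hit the prescribed average $\alpha$. Where you differ is in how these auxiliary measures are obtained and how the crucial rate is justified. The paper takes $\mu_N$ to be the measure of maximal dimension for the sub-system $E_N$ (continued fractions with all digits $>N$) and cites Jaerisch--Kesseb\"ohmer \cite{JK} for the asymptotic $\dim E_N\sim\tfrac12+\frac{\log\log N}{2\log N}$; this immediately gives $\big(\dim\mu_N-\tfrac12\big)\lambda(\mu_N)\gtrsim\log\log N\to\infty$, and the rest is a two-line computation. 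You instead work with the one-parameter family of equilibrium states $\mu_q$ for $q\phi-t(q)\log|T'|$ and re-derive the analogous rate $t(q)-\tfrac12\sim\frac{\log\log|q|}{2\log|q|}$, $\lambda(\mu_q)\sim 2\log|q|$ by a transfer-operator/$\Gamma$-function heuristic. This derivation is morally correct and can be made rigorous (it is essentially what underlies the cited result), but as written it is sketchier than the paper's citation: one must justify replacing the Ruelle eigenvalue by the cylinder sum (bounded distortion, which is fine since $t(q)>\tfrac12$), control the Riemann-sum-to-integral error uniformly in $\epsilon$ and $q$, and confirm that these approximations do not affect the order $\frac{\log\log|q|}{\log|q|}$. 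The one genuine gain in your argument is the clean observation that for $\alpha\le\alpha_G$ no convex combination is needed at all: for $q\le 0$ the Legendre identity $\dim\mu_q=t(q)-q\,t'(q)\ge t(q)>\tfrac12$ already does the job, a point the paper does not separate out. In short: same idea, a different (and more self-contained but less economical) route to the key asymptotic; if you want to avoid re-deriving it, cite \cite{JK} as the paper does.
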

\begin{proof}
Fix $\alpha\in (0,1)$.
Consider the set  of irrationals $x$ for which the continued fraction expansion $a_1(x),a_2(x),\ldots$ satisfies that for some $N\in\N$ $a_i(x)>N$ for all $i\in \N$. We will denote this set $E_N$ and note that if we consider  the restriction of the Gauss map $T$ to
    the union of the intervals $I_j$  ($j\geq N$), then $E_N$ is its attractor and the corresponding value of $s_{\infty}$ is still $\frac{1}{2}$.
In \cite{JK} it is shown that $$\dim E_N\sim\frac{1}{2}+\frac{\log\log N}{2\log N}.$$
    Since $\frac{1}{2}<\dim E_N$,
    we can deduce that $E_N$ admits an ergodic measure of maximal dimension $\mu_N$ with $h(\mu_N)<\infty$. Note that for $N$ sufficiently large we have that $\lambda(\mu_N)\geq\log N$.

Take $\delta_1$ to be the Dirac measure at $\frac{\sqrt{5}-1}{2}$.  Then  $\delta_1$ is ergodic and $\int \phi\text{d}\delta_1=1$.
    Now  consider measures of the form
    $$
    \nu_p=p\mu_N+(1-p)\delta_1.
    $$
    If we choose  $p>\frac{\lambda(\delta_1)4\log N}{\log\log N\lambda(\mu_N)} $ then
we have
\begin{eqnarray*}
h(\nu_p)&=&ph(\mu_N)\geq p\left(\frac{1}{2}+\frac{\log\log N}{4\log N}\right)\lambda(\mu_N)\\
&=&\frac{p}{2}\lambda(\mu_N)+p\frac{\log\log N}{4\log N}\lambda(\mu_N)\\
&>&\frac{1}{2}(p\lambda(\mu_N)+(1-p)\lambda(\delta_1))=\frac{1}{2}\lambda(\nu_p).
\end{eqnarray*}
Thus $\frac{h(\nu_p)}{\lambda(\nu_p)}>\frac{1}{2}$. Furthermore since $\lim_{N\to\infty}\frac{\lambda(\delta(1))4\log N}{\log\log N\lambda(\mu_N)}=0$ and $\lim_{N\to\infty}\int\phi\text{d}\mu_N=0$,
we can choose $q$ such  that   $\frac{h(\nu_p)}{\lambda(\nu_p)}>\frac{1}{2}$ for all $p>q$ and $\alpha=\int \phi\text{d}\nu_p$ for some $p>q$.
\end{proof}

It is straightforward to adapt this argument to the case where $T$ is the Gauss map and  where $\phi$ is a bounded function with variations uniformly tending to $0$. This will show that the interior of the spectrum is strictly greater than $\frac{1}{2}$ . However this is not always the case for alternative choices of $T$. A simple counter-example  is when $P(-s_{\infty}\log |T'|)\leq 0$ and $\phi$ is any bounded potential. In this case $\dim \Lambda_{\alpha}=s_{\infty}$ for all $$\alpha\in \left[\inf_{\mu\in\mathcal{M}(T)}\left\{\int\phi\text{d}\mu\right\},\sup_{\mu\in\mathcal{M}(T)}\left\{\int\phi\text{d}\mu\right\}\right].$$

\subsection{Locally flat spectrum}
Here we look at single functions where the multifractal spectrum will have interesting phase transitions.
These are examples where the function $\alpha\to\dim\Lambda_{\alpha}$ has flat regions but for which the whole spectrum is not flat. Let $T$  be a piecewise linear map defined using a partition (similar maps are studied in \cite{KMS}) as follows. We consider a set of disjoint closed intervals $\{I_i\}_{i=1}^{\infty}$. Denote $s_{\infty}$ as before and let $$
K=\diam(I_1)^{s_{\infty}}\text{ and } C=\sum_{i=2}^{\infty}\diam(I_{i})^{s_{\infty}}.$$
We will assume that $C<1$, $K+C>1$. (These can be easily satisfied. For example, take $|I_n|\approx n^{-2}(\log n)^{-4}$.) Define $T$ to be the piecewise linear map which maps each interval $I_i$ bijectively to the interval $[0,1]$.   These conditions will ensure that $$
\dim\Lambda>s_{\infty},\qquad P(-s_{\infty}\log |T'|)<\infty.
    $$
    We will take $\phi=\chi_{I_1}$, that is the characteristic function for the interval $I_1$.
We will prove the following result.
\begin{thm}\label{flat-spectrum}
There exist $0<\alpha_*<\alpha^*<1$ such that $\dim\Lambda_{\alpha}=s_{\infty}$ for $\alpha\in [0,\alpha_*]\cup[\alpha^*,1]$ and $\dim\Lambda_{\alpha}>s_{\infty}$ for $\alpha\in (\alpha_*,\alpha^*)$.
\end{thm}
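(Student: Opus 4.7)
The plan is to apply Theorem \ref{main} to the single bounded potential $\phi=\chi_{I_1}$. Because $T$ is piecewise linear, both $\phi$ and $\log|T'|$ are locally constant and so have vanishing variations; and for every $\alpha\in[0,1]$ a convex combination of Dirac masses at fixed points in $I_1$ and in $I_2$ is $T$-invariant with $\int\phi\,d\mu=\alpha$, so $Z_0=[0,1]$. Theorem \ref{main} then gives
\[
\dim\Lambda_\alpha=\max\{s_\infty,\ \alpha_3(\alpha)\},
\]
so the whole task reduces to identifying the set $\{\alpha:\alpha_3(\alpha)>s_\infty\}$.

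I would then collapse $\alpha_3$ to an explicit one-variable optimization. Since $|T'|\equiv 1/\diam(I_i)$ on $I_i$, every $T$-invariant $\mu$ with marginals $p_i:=\mu(I_i)$ satisfies the exact identity $\lambda(\mu)=-\sum p_i\log\diam(I_i)$ and the one-step partition bound $h(\mu)\leq -\sum p_i\log p_i$, with equality for the Bernoulli measure of weights $(p_i)$. Maximizing $-\sum p_i\log p_i+s_\infty\sum p_i\log\diam(I_i)$ subject to $\sum p_i=1$, $p_1=\alpha$ by a Lagrange computation (optimum $p_i=(1-\alpha)\diam(I_i)^{s_\infty}/C$ for $i\geq 2$) yields
\[
h(\mu)-s_\infty\lambda(\mu)\leq F(\alpha):=H(\alpha)+(1-\alpha)\log C+\alpha\log K,
\]
where $H(\alpha)=-\alpha\log\alpha-(1-\alpha)\log(1-\alpha)$.

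Next I would analyze $F$ on $[0,1]$. Since $H$ is strictly concave and $F-H$ is affine, $F$ is strictly concave. Using $C<1$ and $\diam(I_1)<1$ (so $K<1$), we have $F(0)=\log C<0$ and $F(1)=\log K<0$. The unique critical point $F'(\alpha)=0$ solves to $\alpha=K/(K+C)$, at which $F(\alpha)=\log(K+C)>0$ by the hypothesis $K+C>1$. Strict concavity together with the negative endpoints and positive maximum forces $\{F>0\}=(\alpha_*,\alpha^*)$ for some $0<\alpha_*<\alpha^*<1$.

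Finally I would translate these two cases back to the spectrum. If $F(\alpha)\leq 0$ the displayed bound forces $h(\mu)/\lambda(\mu)\leq s_\infty$ for every $\mu$ with $\mu(I_1)=\alpha$, so $\alpha_3(\alpha)\leq s_\infty$ and $\dim\Lambda_\alpha=s_\infty$. If $F(\alpha)>0$ I would produce a truncated witness: let $\mu_N$ be the Bernoulli measure supported on $\{I_1,\ldots,I_N\}$ with $p_1=\alpha$ and $p_i=(1-\alpha)\diam(I_i)^{s_\infty}/C_N$ for $2\leq i\leq N$, where $C_N:=\sum_{i=2}^N\diam(I_i)^{s_\infty}$. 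Since only finitely many symbols are used, both $h(\mu_N)$ and $\lambda(\mu_N)$ are finite, and a direct substitution (the $s_\infty$-weighted Lyapunov term cancels against the entropy contribution from the Gibbs weights) yields
\[
h(\mu_N)-s_\infty\lambda(\mu_N)=H(\alpha)+(1-\alpha)\log C_N+\alpha\log K\ \xrightarrow[N\to\infty]{}\ F(\alpha)>0.
\]
Hence $h(\mu_N)/\lambda(\mu_N)>s_\infty$ for all sufficiently large $N$, which gives $\alpha_3(\alpha)>s_\infty$ and therefore $\dim\Lambda_\alpha>s_\infty$. The step requiring the most care is precisely this truncation: on the full infinite alphabet the would-be extremizer of $F(\alpha)$, the Bernoulli measure with $p_i\propto\diam(I_i)^{s_\infty}$, may have infinite entropy and infinite Lyapunov exponent (as already happens in the motivating example $|I_n|\asymp n^{-2}(\log n)^{-4}$), and so need not be admissible in the supremum defining $\alpha_3$; cutting off at a finite level is exactly what produces a bona fide finite-entropy witness realizing $\alpha_3(\alpha)>s_\infty$.
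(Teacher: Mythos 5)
Your proof is correct, and it takes a genuinely different route from the paper's. The paper works on the dual (thermodynamic) side: it computes $P(q\phi-s_\infty\log|T'|)=\log(Ke^q+C)$, uses Proposition~\ref{negpres} (variational principle for pressure) to show that if $P(q(\phi-\alpha)-s_\infty\log|T'|)\le 0$ for some $q$ then $\alpha_3(\alpha)\le s_\infty$, identifies $\alpha_*=\inf_{q<0}\alpha(q)$, $\alpha^*=\inf_{q>0}\alpha(q)$ with $\alpha(q)=\log(Ke^q+C)/q$, and for $\alpha\in(\alpha_*,\alpha^*)$ appeals to the multifractal formalism of \cite{FLWW} to locate $t(\alpha)\in(s_\infty,\dim\Lambda)$ solving the implicit system \eqref{equations}. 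You work on the primal side: you compute the exact Legendre conjugate $F(\alpha)=\sup\{h(\mu)-s_\infty\lambda(\mu):\mu(I_1)=\alpha\}=H(\alpha)+(1-\alpha)\log C+\alpha\log K$ by a Lagrange optimization over Bernoulli weights (valid precisely because the map is piecewise linear, so $\lambda(\mu)$ depends only on $(p_i)$ and the one-step entropy bound is sharp for Bernoulli measures), and read off $(\alpha_*,\alpha^*)$ from strict concavity, the negative endpoint values $\log C,\log K$, and the positive maximum $\log(K+C)$. The two analyses are Legendre duals of one another, so it is unsurprising that the critical exponents agree; what you do genuinely buys you is (i) an explicit closed form for the "excess'' exponent $F(\alpha)$ rather than an implicit characterization, and (ii) a concrete finite-alphabet Bernoulli witness for $\alpha_3(\alpha)>s_\infty$ in place of the citation to \cite{FLWW}. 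Your remark about the truncation being the delicate step is well taken and is in fact the reason the definition of $\alpha_3$ in the paper carries the constraint $h(\mu)<\infty$; the paper glosses over this by delegating the positive-dimension region to \cite{FLWW}, whereas you make it explicit. The trade-off is that the paper's dual argument extends to non-linear $T$ (where the one-step partition entropy is no longer achieved and $\lambda(\mu)$ is not a linear function of the marginals), so the two proofs are not interchangeable in greater generality; but for the piecewise linear example at hand your approach is cleaner and more self-contained.
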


\subsection*{Proof of Theroem \ref{flat-spectrum}}
We will prove Theorem \ref{flat-spectrum} by a series of propositions and lemmas.
We start with the following general proposition.
\begin{prop}\label{negpres}Let $\phi:\Lambda\to\R$ have variations uniformly converging to $0$.
For any $\alpha \in \mathbb{R}$ if there exist $q,\delta$ such that
$$P(q(\phi-\alpha)-\delta\log |T'|)\leq 0,$$
then
$$\sup_{\mu\in\mathcal{M}(T)}\left\{\frac{h(\mu)}{\lambda(\mu)}:\int\phi\text{d}\mu=\alpha\text{ and }\lambda(\mu)<\infty\right\}\leq \delta.$$
\end{prop}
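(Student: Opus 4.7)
The plan is to apply the variational principle for the pressure directly to the potential $\psi := q(\phi - \alpha) - \delta \log|T'|$. Recall that the pressure satisfies
\[
P(\psi) \geq h(\mu) + \int \psi \,\mathrm{d}\mu
\]
for every $T$-invariant probability measure $\mu$ with $\int \psi \,\mathrm{d}\mu > -\infty$, which is the inequality direction of the definition given at the start of Section~2.

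The key observation is that the constraint $\int \phi\,\mathrm{d}\mu = \alpha$ is tailored precisely to eliminate the $q(\phi - \alpha)$ contribution from the integral. First I would take an arbitrary admissible measure $\mu$, i.e.\ one with $\int\phi\,\mathrm{d}\mu = \alpha$ and $\lambda(\mu) < \infty$. Then, since $\int q(\phi - \alpha)\,\mathrm{d}\mu = 0$ and $\int \log|T'|\,\mathrm{d}\mu = \lambda(\mu)$ is finite, the integral $\int \psi\,\mathrm{d}\mu = -\delta \lambda(\mu)$ is finite (in particular greater than $-\infty$), so the variational inequality applies and yields
\[
0 \;\geq\; P(\psi) \;\geq\; h(\mu) - \delta \lambda(\mu).
\]
Dividing by $\lambda(\mu) > 0$ (which is positive since $|T'|\geq \xi > 1$) gives $h(\mu)/\lambda(\mu) \leq \delta$, and taking the supremum over all admissible $\mu$ delivers the stated bound.

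There is no serious obstacle here: this is a one-line application of the variational principle, and the statement of the proposition has been set up so that the linear constraint on $\int\phi\,\mathrm{d}\mu$ exactly matches the affine shift $\phi - \alpha$ in the potential. The only minor point to verify is that $\int \psi\,\mathrm{d}\mu > -\infty$ so that $\mu$ is a legitimate test measure in the variational principle, which is immediate from $\lambda(\mu) < \infty$ and the fact that $\phi$ has variations tending to zero (and in particular $\int \phi\,\mathrm{d}\mu = \alpha$ is finite).
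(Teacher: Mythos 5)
Your proof is correct and essentially identical to the paper's: both apply the variational principle to the potential $q(\phi-\alpha)-\delta\log|T'|$, use the constraint $\int\phi\,\mathrm{d}\mu=\alpha$ to kill the first term, and divide by $\lambda(\mu)>0$. You are slightly more careful in verifying $\int\psi\,\mathrm{d}\mu>-\infty$, but this is the same argument.
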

\begin{proof}
Let $\mu\in\mathcal{M}(T)$ such that $\int\phi\text{d}\mu=\alpha$ and $\lambda(\mu)<\infty$.  By the variational principle, we have
$$h(\mu)+\int (q(\phi-\alpha)-\delta\log |T'|)\text{d}\mu\leq 0.$$
So,
$$h(\mu)-\delta\lambda(\mu)\leq 0.$$
Thus ${h(\mu)}/{\lambda(\mu)}\leq\delta$ which completes the proof.
\end{proof}
Therefore, for our specific choice of $T$ and $\phi$
if we can find $q>0$ and $\alpha^{*}\in (0,1)$ such that $P(q(\phi-\alpha^{*})-s_{\infty}\log |T'|)=0$ then $\dim \Lambda_{\alpha}=s_{\infty}$ for all $\alpha\in (\alpha^{*},1)$. Similarly if we can find $q<0$ and $\alpha_*\in (0,1)$ such that $P(q(\phi-\alpha_*)-s_{\infty}\log |T'|)=0$ then $\dim \Lambda_{\alpha}=s_{\infty}$ for all $\alpha\in (0,\alpha_{*})$.

We are going to  show that we can indeed find such $\alpha_*,\alpha^*$. We can calculate
$$
P(q(\phi-\alpha)-s_{\infty}\log |T'|)=\log(Ke^{q}+C)-\alpha q.$$
By solving the equation $P(q(\phi-\alpha)-s_{\infty}\log |T'|)=0$, we have
$$\alpha(q)=\frac{\log(Ke^q+C)}{q} , \quad q\neq 0.$$
We then have the following lemma.
\begin{lem}
Such $\alpha_*,\alpha^*$ do exist.
\end{lem}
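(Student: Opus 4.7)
The plan is to do one-variable convex analysis on $F(q) := \log(Ke^q + C)$, which equals $P(q\phi - s_\infty \log|T'|)$ for this piecewise linear system; then solving $P(q(\phi-\alpha) - s_\infty \log|T'|) = 0$ is equivalent to $\alpha = F(q)/q = \alpha(q)$. First I would record that $F$ is strictly convex (since $F''(q) = KCe^q/(Ke^q+C)^2 > 0$), that $F(0) = \log(K+C) > 0$ by the standing hypothesis $K+C > 1$, and that $F'(q) = Ke^q/(Ke^q+C)$ increases strictly from $0$ to $1$ as $q$ ranges over $\R$. The asymptotics $F(q) - q \to \log K$ as $q \to +\infty$ and $F(q) \to \log C$ as $q \to -\infty$ are also needed; both $\log K$ and $\log C$ are strictly negative (using $K \leq 1$ together with $C < 1$, the degenerate case $K=1$ corresponding to $s_\infty = 0$ being trivially handled).

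The key geometric observation is that $\alpha(q) = F(q)/q$ is the slope of the chord from the origin to $(q, F(q))$, so its critical points are exactly the $q$ at which the tangent to the graph of $F$ passes through the origin, and at any such point $\alpha(q) = F'(q)$. I would then introduce $g(q) := F(q) - qF'(q)$, the $y$-intercept of the tangent at $q$. Differentiating gives $g'(q) = -qF''(q)$, so $g$ is strictly increasing on $(-\infty,0)$ and strictly decreasing on $(0,\infty)$, with maximum $g(0) = F(0) > 0$ and limits $g(q) \to \log K$ as $q \to +\infty$ and $g(q) \to \log C$ as $q \to -\infty$, both negative. The Intermediate Value Theorem then produces a unique $q^* > 0$ and a unique $q_* < 0$ with $g(q^*) = g(q_*) = 0$, i.e., two distinct tangent lines from the origin to the graph of $F$.

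Setting $\alpha^* := \alpha(q^*) = F'(q^*)$ and $\alpha_* := \alpha(q_*) = F'(q_*)$, both values lie in $(0,1)$ because $F'$ takes values in $(0,1)$. Since $q_* < q^*$ and $F'$ is strictly increasing, automatically $\alpha_* < \alpha^*$. By construction $\alpha^*$ solves $P(q(\phi - \alpha) - s_\infty \log|T'|) = 0$ with $q > 0$ and $\alpha_*$ does so with $q < 0$, which is the content of the lemma.

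I don't expect a genuine obstacle here; the one step that deserves a careful line of computation is the limit behaviour of $g$ at $\pm\infty$, where one needs $qF'(q) \to 0$ as $q \to -\infty$ (from $F'(q) \sim (K/C)e^q$, so $qF'(q)$ vanishes exponentially) and $q - qF'(q) \to 0$ as $q \to +\infty$ (from $1 - F'(q) \sim (C/K)e^{-q}$). Everything else is the standard tangent-from-a-point picture for a strictly convex function whose graph meets the $y$-axis strictly above zero.
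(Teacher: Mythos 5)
Your argument is correct, and it is organised somewhat differently from the paper's. The paper analyses $\alpha(q)=\log(Ke^q+C)/q$ directly: it records its real-analyticity on each half-line, the limits $\alpha\to 1$ as $q\to+\infty$, $\alpha\to 0$ as $q\to-\infty$, $\alpha\to\pm\infty$ as $q\to 0^{\pm}$, and the unique solutions of $\alpha(q)=1$ (at $q_+>0$) and $\alpha(q)=0$ (at $q_-<0$); from this boundary behaviour it concludes that $\alpha$ attains an interior minimum on $(q_+,\infty)$ and an interior extremum on $(-\infty,q_-)$, both lying in $(0,1)$. You instead lean on the strict convexity of $F(q)=\log(Ke^q+C)$: reading $\alpha(q)=F(q)/q$ as a chord slope turns the extremising condition $\alpha'(q)=0$ into $g(q):=F(q)-qF'(q)=0$, and the unimodality of $g$ (from $g'=-qF''$), together with $g(0)=\log(K+C)>0$ and the limits $g(q)\to\log K<0$ as $q\to+\infty$, $g(q)\to\log C<0$ as $q\to-\infty$, yields existence and uniqueness of $q_*<0<q^*$ in one stroke, with $\alpha^*=F'(q^*)$ and $\alpha_*=F'(q_*)$ landing in $(0,1)$ automatically because $F'$ does. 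The underlying asymptotics are the same, but your Legendre-type formulation via $g$ isolates the critical-point mechanism more explicitly and gives uniqueness of the extremising $q$ on each side for free. One small remark: the degenerate case $K=1$ you set aside cannot actually occur under the standing hypotheses, since $C<1$ already forces $s_\infty>0$ and hence $K=\diam(I_1)^{s_\infty}<1$.
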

\begin{proof}
The function $\alpha(q)$ has the following properties:
\begin{enumerate}
\item[(1).]
The function $\alpha(q)$ is real analytic on both $(-\infty, 0)$ and $(0,\infty)$.
\item[(2).]
   \( \lim_{q\to \infty} \alpha(q)=1 \quad \text{and} \quad \lim_{q\to -\infty} \alpha(q)=0.\)
\item[(3).]
\( \lim_{q\to 0+} \alpha(q)=+\infty \quad \text{and} \quad \lim_{q\to 0-} \alpha(q)=-\infty.\)
\item[(4).]
   Under our conditions $K+C>1$ and $C<1$, $\alpha(q)<1$ for $q<0$ and $\alpha(q)>0$ for $q>0$ and the equation $\alpha(q)=0$ admits only one solution $$q=q_-= \log \frac{1-C}{K}<0$$ and the equation $\alpha(q)=1$ admits only one solution $$q=q_+= \log \frac{C}{1-K}>0.$$
\end{enumerate}

From the above properties, one can see the minimum and maximum of the following can be obtained:
\[
          \alpha^*= \inf_{q>0} \alpha(q) =\inf_{q>q_+} \alpha(q) \quad \text{and} \quad
          \alpha_*= \inf_{q<0} \alpha(q) =\inf_{q<q_-} \alpha(q).
\]
These are what we want.
\end{proof}
Thus we have that for any $\alpha\in [0,\alpha_*]\cup[\alpha^*,1]$ there exists $q$ such that
$P(q(\phi-\alpha)-s_{\infty}\log |T'|)\leq 0$ and so
by Proposition \ref{negpres} we have
$$\dim \Lambda_\alpha=s_{\infty}, \quad \forall \alpha\in [0, \alpha_*]\ \mbox{\rm and} \ \forall \alpha\in [\alpha^*, 1].$$

Now we need to show $$
\forall \alpha\in (\alpha_*, \alpha^*), \quad \dim_H \Lambda_\alpha>s_{\infty}.
$$

For $t\in [s_{\infty}, \dim \Lambda]$, denote
\[ K(t)=|I_1|^{t}, \quad \text{and} \quad C(t)=\sum_{i=2}^{\infty} |I_i|^{t}.\]

     Let $f(t,q)= P(q\phi-t\log |T'|)$. Then the dimension of the set $\Lambda_\alpha$ is the first component $t(\alpha)$ of the solution $(t(\alpha),q(\alpha))$ to the following system (see \cite{FLWW}):
\begin{eqnarray}\label{equations}
\left\{
     \begin{array}{ll}
        f(t,q)=q\alpha, \\
        \displaystyle \frac{\partial f}{\partial q}(t,q)= \alpha
     \end{array}
\right.
\end{eqnarray}
whenever such a solution exists.
By a simple calculation we have
\[ f(t,q)=\log(K(t)e^q+C(t)).\]
For a fixed $t$, let $f_t(q)=f(t,q)$.
\begin{lem}
For $\alpha\in (\alpha_*,\alpha^*)$ we have that $P(q(\phi-\alpha)-s_{\infty}\log|T'|)>0$ for all $q$ and that $P(q(\phi-\alpha)-(\dim\Lambda)\cdot\log |T'|)\leq 0$ for some $q\in\R$.
\end{lem}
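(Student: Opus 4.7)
The plan is to exploit the explicit formula derived for the piecewise-linear setting,
\[ P(q\phi - t\log|T'|) = f(t,q) = \log\bigl(K(t)e^q + C(t)\bigr), \]
and to reduce each of the two assertions to an elementary calculation.

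For the first assertion, I would fix $\alpha \in (\alpha_*,\alpha^*)$ and study $g_\alpha(q) := \log(Ke^q + C) - q\alpha$, where $K = K(s_\infty)$ and $C = C(s_\infty)$, splitting into the cases $q = 0$, $q > 0$ and $q < 0$. At $q = 0$ the value equals $\log(K+C)$, which is strictly positive by the standing assumption $K+C > 1$. For $q > 0$, the inequality $g_\alpha(q) > 0$ rearranges to $\alpha < \log(Ke^q+C)/q = \alpha(q)$, and this holds for every $q > 0$ because $\alpha < \alpha^* \leq \alpha(q)$ by the characterization of $\alpha^*$ from the preceding lemma. The case $q < 0$ is symmetric: dividing by the negative number $q$ reverses the inequality, so $g_\alpha(q) > 0 \Leftrightarrow \alpha > \alpha(q)$, which holds for every $q < 0$ by the analogous extremal characterization of $\alpha_*$ on the negative ray together with $\alpha > \alpha_*$.

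For the second assertion I would simply evaluate the pressure at $q = 0$. The explicit formula gives
\[ P\bigl(-\dim\Lambda\cdot\log|T'|\bigr) = \log\bigl(K(\dim\Lambda) + C(\dim\Lambda)\bigr) = \log\sum_i |I_i|^{\dim\Lambda}. \]
The only nontrivial input is that $\dim\Lambda$ satisfies Bowen's equation $\sum_i |I_i|^s = 1$. This follows from the strong separation of the $I_i$, the piecewise-linearity of $T$ and the standard countable-alphabet Moran/Mauldin--Urba\'nski theory already cited by the paper, together with the standing assumptions $K+C > 1$ and $P(-s_\infty\log|T'|) < \infty$, which guarantee that Bowen's equation has a unique solution strictly above $s_\infty$, necessarily equal to $\dim\Lambda$. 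Substituting gives $\log 1 = 0 \leq 0$, so the second claim holds trivially with $q=0$.

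The only genuine subtlety is the identification of $\dim\Lambda$ with Bowen's exponent; beyond that, everything is definition-chasing and sign-checking in the formula for $f(t,q)$. The role of this lemma is plainly to set up an intermediate-value argument on $t\mapsto P(q(\phi-\alpha)-t\log|T'|)$: positivity at $t=s_\infty$ (for every $q$) and nonpositivity at $t=\dim\Lambda$ (for some $q$) will, by continuity and monotonicity in $t$, produce some $t\in(s_\infty,\dim\Lambda]$ at which the pressure vanishes, whence the variational principle yields an invariant measure with $h/\lambda > s_\infty$ realising the average $\alpha$.
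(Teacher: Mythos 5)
Your argument is correct, and the two halves compare differently with the paper's proof. For the first assertion you and the paper take essentially the same route: both reduce $P(q(\phi-\alpha)-s_\infty\log|T'|)>0$ to the extremal characterisation of $\alpha^*$ and $\alpha_*$ via the function $\alpha(q)=f_{s_\infty}(q)/q$ on the positive and negative rays, and both handle $q=0$ via $K+C>1$. One point you implicitly get right that is worth making explicit: for $q<0$ dividing by $q$ flips the inequality, so ``$g_\alpha(q)>0$ for all $q<0$'' is equivalent to $\alpha\geq\sup_{q<0}\alpha(q)$; the paper's displayed formula writes $\alpha_*$ as an infimum over $q<0$, but the argument (and the positivity of $\alpha_*$) only makes sense if it is the supremum over $q<0$ (equivalently over $q<q_-$), and your case analysis uses the correct interpretation.

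For the second assertion your route is genuinely different from, and simpler than, the paper's. The paper picks $q=q(\alpha,\dim\Lambda)$ at which $\partial_q f_{\dim\Lambda}(q)=\alpha$, invokes the equilibrium state $\mu_{q,\dim\Lambda}$, and deduces $f_{\dim\Lambda}(q)-\alpha q = h(\mu_{q,\dim\Lambda})-\dim\Lambda\cdot\lambda(\mu_{q,\dim\Lambda})\leq 0$ from $h(\mu)\leq\lambda(\mu)\cdot\dim\Lambda$. You instead observe that $q=0$ works outright because Bowen's equation gives $P(-\dim\Lambda\cdot\log|T'|)=\log\sum_i|I_i|^{\dim\Lambda}=0$ under the standing hypotheses $K+C>1$ and $P(-s_\infty\log|T'|)<\infty$ (which force the Bowen root to lie strictly above $s_\infty$ and to coincide with $\dim\Lambda$ by Mauldin--Urba\'nski theory). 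Since the lemma only asks for \emph{some} $q$ with nonpositive pressure, and since the downstream intermediate-value argument uses $\inf_q[f_t(q)-\alpha q]$ rather than a specific $q$, your $q=0$ suffices. What you lose relative to the paper is that $q=0$ is not the value of $q$ that will ultimately appear in the solution of system~(\ref{equations}) --- the equilibrium state at $(q,t)=(0,\dim\Lambda)$ is the SRB measure, whose average of $\phi$ is generally $\tilde\alpha\neq\alpha$ --- whereas the paper's choice of $q$ already carries the correct constraint $\int\phi\,\mathrm{d}\mu_{q,\dim\Lambda}=\alpha$. But that extra information is not needed for the lemma itself, so your proof is a valid and more economical alternative.
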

\begin{proof}
The function $q \mapsto f_t(q)$ has the following properties:
\begin{enumerate}
\item[(1)] For $t\in (s_{\infty}, \dim\Lambda)$, the function $f_t(q)$ has two asymptotic lines $y= \log C(t)$ for $q\to -\infty$ and $y= x+ \log K(t)$ for $q\to \infty$. In particular note that for any $\alpha\in (0,1)$ there exists $q(\alpha,t)$ such that $f_t'(q(\alpha,t))=\alpha$.
\item[(2)]
    \[\alpha^*= \inf_{q>0} \frac{f_{s_{\infty}}(q)}{q}<1 \quad \text{and} \quad \alpha_*= \inf_{q<0} \frac{f_{s_{\infty}}(q)}{q}>0.  \]
\item[(3)] If $\alpha\in (\alpha_*,\alpha^*)$ then $f_{s_{\infty}}(q)=\alpha q$ has no solution.
\end{enumerate}
By property (3) and property (2) we can thus deduce that for $\alpha\in (\alpha_*,\alpha^*)$ and for any $q\in\R$
$$P(q(\phi-\alpha)-s_{\infty}\log|T'|)=f_{s_{\infty}}(q)-\alpha q>0,$$
which is the first part of the lemma.

By property (1) if we let $s=\dim\Lambda$ then there exists $q(\alpha,s)$ such that $f_{s}'(q(\alpha,s))=\alpha$. It then follows that there will be an equilibrium state $\mu_{q,s}$ such that $\int\phi\text{d}\mu_{q,s}=\alpha$ and
$$f_{s}(q(\alpha,s))=\alpha q-s\lambda(\mu_{q,s})+h(\mu_{q,s})\leq \alpha q.$$
Thus the second part of the lemma follows.
\end{proof}
Due to the fact that $f(t,q)$ depends analytically on $t,q$ in the region $t>s_{\infty},q\in\R$, we can now assert that
   for $\alpha\in(\alpha_*, \alpha^*)$ there exists $t(\alpha)\in (s_{\infty},\dim\Lambda)$ which is the first coordinate  of the solution $(t(\alpha),q(\alpha))$ to (\ref{equations}) and thus $\dim \Lambda_{\alpha}= t(\alpha)$. This completes the proof of Theorem \ref{flat-spectrum}.

We can also deduce that if $\mu_{SRB}$ is the equilibrium state for the potential $-(\dim\Lambda)\cdot\log |T'|$ and $\tilde{\alpha}=\int\phi\text{d}\mu_{SRB}$ then the function $\alpha\to\dim \Lambda_{\alpha}$ is strictly increasing on $(\alpha_*,\tilde{\alpha})$ and strictly decreasing on $(\tilde{\alpha},\alpha_*)$ and by the implicit function theorem varies analytically in the region $(\alpha_*,\alpha^*)$.

\end{document}